\def\N{{\mathbb N}}
\def\Z{{\mathbb Z}}
\def\re{\mathrm{Re}}
\def\C{{\mathbb C}}
\def\Q{{\mathbb Q}}
\def\det{\mathrm{det}}
\DeclareSymbolFont{cyrletters}{OT2}{wncyr}{m}{n}\DeclareMathSymbol{\Sha}{\mathalpha}{cyrletters}{"58}
\def\Tr{\mathop{\rm Tr}}
\def\re{\mathrm{Re}}
\def\ord{\mathop{\rm ord}}
\def\Gal{\mathrm{Gal}}
\def\Frob{\mathrm{Frob}}
\newcommand{\Tam}{\operatorname{Tam}}
\newcommand{\Directsum}{\bigoplus}
\def\GL{\mathrm{GL}}
\def\ord{{\textrm{ord}}}
\def\1{{\bf 1}}
\begin{document}

\title*{Explorations in the theory of partition zeta functions
\thanks{The first author is supported by National Science Foundation and the Asa Griggs Candler Fund.}}
\titlerunning{Explorations in the theory of partition zeta functions}
\author{Ken Ono, Larry Rolen, and Robert Schneider}
\authorrunning{K. Ono, L. Rolen, and R. Schneider}
\institute{Ken Ono \at Department of Mathematics and Computer Science, Emory University \\ \email{ono@mathcs.emory.edu} \\ Larry Rolen \at Department of Mathematics, The Pennsylvania State University \\ \email{larryrolen@psu.edu}\\ Robert Schneider \at Department of Mathematics and Computer Science, Emory University\\ \email{robert.schneider@emory.edu}}

\maketitle

\abstract{}
We introduce and survey results on two families of zeta functions connected to the multiplicative and additive theories of integer partitions.  In the case of the multiplicative theory, we provide specialization formulas and results on the analytic continuations of these ``partition zeta functions'', find unusual formulas for the Riemann zeta function, prove identities for multiple zeta values, and see that some of the formulas allow for $p$-adic interpolation. The second family we study was anticipated by Manin and makes use of modular forms, functions which are intimately related to integer partitions by universal polynomial recurrence relations. We survey recent work on these zeta polynomials, including the proof of their Riemann Hypothesis.


\section{The setting: Visions of Euler}

In antiquity, storytellers began their narratives by invoking the muse whose divine influence would guide the unfolding imagery. It is fitting, then, that we begin this article by praising the immense curiosity of Euler, whose imagination ranged playfully across almost the entire landscape of modern mathematical thought. 



Euler made spectacular use of product-sum relations, often arrived at by unexpected avenues, thereby inventing one of the principle archetypes of 
modern number theory. Among his many profound identities is the product formula for what is now called the Riemann zeta function:
\begin{equation}\label{zetaproduct}
\zeta(s):=\sum_{n=1}^{\infty}n^{-s}=\prod_{p\in\mathbb P}(1-p^{-s})^{-1}
\end{equation}

With this relation, Euler connected the (at the time) cutting-edge theory of infinite series to the ancient set $\mathbb P$ of prime numbers. Moreover, in solving the famous ``Basel problem'' posed a century earlier by Pietro Mengoli (1644), Euler showed us how to compute even powers of $\pi$ using the zeta function, giving explicit formulas of the shape 
\begin{equation}\label{zeta_even}
\zeta(2N)=\pi^{2N}\times\  \mathrm{rational}.
\end{equation}
It turns out there are other classes of zeta functions, arising from other Eulerian formulas in the universe of partition theory. 

Much like the set of positive integers, but perhaps even more richly, the set of integer partitions ripples with striking patterns and beautiful number-theoretic phenomena. In fact, the positive integers $\mathbb N$ are embedded in the integer partitions $\mathcal P$ in a number of ways: obviously, positive integers themselves represent the set of partitions into one part; less trivially, the prime decompositions of integers are in bijective correspondence with the set of prime partitions, i.e., the partitions into prime parts (if we map the number 1 to the empty partition $\emptyset$), as Alladi and Erd\H{o}s note \cite{AlladiErdos}. We might also identify the divisors of $n$ with the partitions of $n$ into identical parts, and there are many other interesting ways to associate integers to the set of partitions.

Euler found another profound product-sum identity, the generating function for the partition function $p(n)$ 
\begin{equation}\label{partition_genfctn}
\prod_{n=1}^{\infty}(1-q^n)^{-1}=\sum_{n=0}^{\infty}p(n)q^n,
\end{equation}
single-handedly establishing the theory of integer partitions. This formula doesn't look much like the zeta function identity \eqref{zetaproduct}; however, generalizing Euler's proofs of these theorems leads to a new class of partition-theoretic zeta functions. It turns out that \eqref{zetaproduct} and \eqref{partition_genfctn} both arise as specializations of a single ``master'' product-sum formula.

Before we proceed, let us fix some notation. Let $\mathcal P$ 
denote the set of all integer partitions
. 
Let $\lambda=(\lambda_1,\lambda_2,\dots,\lambda_r),$ with $\lambda_1\geq\lambda_2\geq\dots\geq\lambda_r\geq 1$, denote a generic partition, $l(\lambda):=r$ denote its {\it length} (the number of parts), and $|\lambda|:=\lambda_1+\lambda_2+\dots+\lambda_r$ denote its {\it size} (the number being partitioned). We write ``$\lambda\vdash n$'' to mean $\lambda$ is a partition of $n$, and ``$\lambda_i\in\lambda$'' to indicate $\lambda_i\in\mathbb N$ is one of the parts of $\lambda$.

Then we have the following ``master'' identity (which is a piece of Theorem 1.1 in \cite{Robert}). 
 
\begin{proposition}\label{productsum} For an arbitrary function $f:\N\to\C$, we have 
$$
\prod_{n=1}^{\infty}(1-f(n)q^n)^{-1}=\sum_{\lambda\in\mathcal P}q^{|\lambda|}\prod_{\lambda_i\in\lambda}f(\lambda_i).
$$
\end{proposition}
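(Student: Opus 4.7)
The plan is to prove this as a formal power series identity in $q$, which sidesteps convergence questions; once established formally, it is then valid analytically on any domain where both sides converge absolutely (e.g.\ $|q|<1$ with $f$ bounded).

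First, I would expand each factor on the left-hand side as a geometric series in $q$:
$$
(1-f(n)q^n)^{-1}=\sum_{k=0}^{\infty}f(n)^{k}q^{nk}.
$$
Then the product becomes
$$
\prod_{n=1}^{\infty}\sum_{k_n=0}^{\infty}f(n)^{k_n}q^{nk_n}.
$$
Formally multiplying out this infinite product yields a sum indexed by sequences $(k_1,k_2,k_3,\dots)$ of nonnegative integers with only finitely many nonzero entries, where each term equals
$$
\prod_{n=1}^{\infty}f(n)^{k_n}q^{nk_n}=q^{\sum_n nk_n}\prod_{n=1}^{\infty}f(n)^{k_n}.
$$

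Next, I would identify such a sequence $(k_1,k_2,\dots)$ with the partition $\lambda$ whose part $n$ appears with multiplicity $k_n$. This correspondence is a bijection between finitely supported sequences in $\mathbb{N}^{\mathbb{N}}$ and $\mathcal{P}$ (the empty sequence $\leftrightarrow \emptyset$). Under it, $\sum_n nk_n=|\lambda|$ and $\prod_n f(n)^{k_n}=\prod_{\lambda_i\in\lambda}f(\lambda_i)$, so the formal expansion reorganizes into exactly the right-hand side $\sum_{\lambda\in\mathcal{P}}q^{|\lambda|}\prod_{\lambda_i\in\lambda}f(\lambda_i)$.

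The only subtlety worth spelling out is the legitimacy of expanding the infinite product as a sum over $\mathcal{P}$. Working in $\mathbb{C}[[q]]$, the coefficient of $q^N$ on the left is determined by the truncated product $\prod_{n=1}^{N}(1-f(n)q^n)^{-1}$ since factors with $n>N$ contribute $1+O(q^{N+1})$. Computing the coefficient of $q^N$ in this finite product directly gives $\sum_{\lambda\vdash N}\prod_{\lambda_i\in\lambda}f(\lambda_i)$, matching the coefficient of $q^N$ on the right. Thus the identity holds coefficient by coefficient, and the main (mild) obstacle, careful bookkeeping of the reindexing from sequences to partitions, is dispatched by this truncation argument.
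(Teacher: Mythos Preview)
Your proof is correct and is precisely the approach the paper indicates: it says the proof ``proceeds along similar lines to Euler's proof'' of the partition generating function and ``can be seen immediately if one expands a few terms of the infinite product by hand, without collecting coefficients in the usual manner.'' Your geometric-series expansion followed by the bijection between finitely supported multiplicity sequences and partitions is exactly this argument, carried out with appropriate care about formal power series.
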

 
The sum on the right is taken over all partitions, and the left-hand product is taken over the parts $\lambda_i$ of partition $\lambda$. The proof proceeds along similar lines to Euler's proof of \eqref{partition_genfctn}, and can be seen immediately if one expands a few terms of the infinite product by hand, without collecting coefficients in the usual manner. This simple identity also has interesting (and sometimes exotic) representations in terms of Eulerian $q$-series and continued fractions; readers are referred to \cite{Robert} for details. 

\begin{remark}
Note that, collecting like terms, we can re-write such partition sums as standard power series, summed over non-negative integers:
\begin{equation}\label{rewrite}
\sum_{\lambda\in\mathcal P}c_{\lambda}q^{|\lambda|}=\sum_{n=0}^{\infty}q^n\sum_{\lambda\vdash n}c_{\lambda}
\end{equation}
\end{remark}

It is immediate from Proposition \ref{productsum} and \eqref{rewrite} that the partition generating function (\ref{partition_genfctn}) results if we let $f\equiv 1$ identically. 

Similarly, if we set $f(n)=n^{-s},q=1$, and sum instead over the subset $\mathcal P_{\mathbb P}$ of prime partitions, we arrive at the Euler product formula (\ref{zetaproduct}) for the zeta function. This follows from the bijective correspondence between prime partitions and the factorizations of natural numbers noted above.

In light of these observations, we might view the Riemann zeta function as the prototype for a new class of combinatorial objects arising from Eulerian methods.

\subsection{Partition-theoretic zeta functions}\label{Partition-theoretic zeta functions}

Inspired by work of Euler \cite{Dunham}, Fine \cite{Fine}, Andrews \cite{Andrews}, Bloch and Okounkov \cite{BlochOkounkov}, Zagier \cite{Zagier2}, Alladi and Erd\H{o}s \cite{AlladiErdos}, and others, the authors here undertake the study of a class of zeta functions introduced by the third author in \cite{Robert}, resembling the Riemann zeta function $\zeta(s)$ but summed over proper subsets of $\mathcal P$, as opposed to over natural numbers. 

In this paper, we review a few of the results from \cite{Robert}, and record a number of further identities relating certain zeta functions arising from the theory of partitions to various objects in number theory such as Riemann zeta values, multiple zeta values, and infinite product formulas. Some of these formulas are related to results in the literature; they are presented here as examples of this new class of partition-theoretic zeta functions. We also give several formulas for the Riemann zeta function, and results on the analytic continuation (or non-existence thereof) of zeta-type series formed in this way. Furthermore, we discuss the $p$-adic interpolation of these zeta functions in analogy with the classical work of Kubota and Leopoldt on $p$-adic continuation of the Riemann zeta function \cite{KL}.


To describe our primary object of study, we must introduce a new statistic related to partitions, to complement the length and size defined above. We define the {\it integer} of a partition $\lambda$, notated as $n_{\lambda}$, to be the product of its parts:
\begin{equation}
n_{\lambda}:=\lambda_1 \lambda_2 ... \lambda_r
\end{equation}
This multiplicative statistic may not seem very natural as partitions arise purely additively, with deep additive structures such as Ramanujan congruences dominating the theory. Yet if we let $n_{\lambda}$ formally replace the usual index $n$ in the Riemann zeta function, and sum instead over appropriate subsets of partitions, we arrive at an analytic-combinatorial object with many nice properties. 

\begin{definition}\label{zetadef}

Over a subset $\mathcal P'\subset\mathcal P$ and value $s\in\C$ for which the sum converges
, we define a \emph{partition zeta function} to be the series

\begin{equation*}
\zeta_{\mathcal P'}(s):=\sum_{\lambda\in\mathcal P'}n_{\lambda}^{-s}.
\end{equation*}

\end{definition}


Very nice relations 
arise from unique properties of special subsets $\mathcal P'$, e.g. partitions with some distinguishing structure, or into parts sharing some trait. For example, if we let $\mathcal P^*\subset\mathcal P$ denote partitions into distinct parts, there are interesting closed-form expressions, such as 
\begin{equation}
\zeta_{\mathcal P^*}(2)=\frac{\operatorname{sinh \pi}}{\pi},
\end{equation}
which follows from Proposition \ref{productsum} together with a formula of Euler (cf. \cite{Robert}). By summing instead over partitions into parts $\geq 2$ (that is, disallowing $1$'s), we arrive at curious identities such as
\begin{equation}\label{ramanujan}
\zeta_{\mathcal P_{\geq 2}}(3) = \frac{3\pi }{\cosh \left(\frac{1}{2} \pi \sqrt{3}\right)},\
\end{equation}
which can be found from Proposition \ref{productsum} together with a formula of Ramanujan \cite{Robert}. This equation is somewhat surprising, as the Riemann zeta values at odd arguments are famously enigmatic.  

Other attractive closed sums can be found---
and general structures observed, as detailed in \cite{Robert}---when we restrict our attention to partitions $\mathcal P_{\mathcal M}$ whose parts all lie in a subset $\mathcal M\subset\mathbb N$. 
It is easy to check from Proposition \ref{productsum} that we have the Euler product formula
\begin{align}\label{DirichletProduct}
\zeta_{\mathcal P_{\mathcal M}}(s)&=\prod_{k\in\mathcal M}\left(1-k^{-s}\right)^{-1}.
\end{align}

We see from the right-hand side of \eqref{DirichletProduct} that $\zeta_{\mathcal P_{\mathcal M}}(s)$ diverges if $1\in\mathcal M$, thus the restriction $1\notin\mathcal M$ exhibited in the pair of identities above is a necessary one here. In fact, some restriction on the maximum multiplicity of 1 occurring as a part is necessary for any partition zeta function to converge. That is, we must sum over partitions containing 1 with multiplicity at most $m\geq 0$. 
Note that the resulting zeta function will equal the one in the case where 1's are not allowed, multiplied by $m+1$, as each $n_{\lambda}^{-s}$ is repeated $m+1$ times in the sum (adjoining 1's to a partition does not affect its ``integer''). 

We note that if Dirichlet series coefficients $a_n$ are defined by
\[
\zeta_{\mathcal P_{\mathcal M}}(s)=:\sum_{n\geq1}a_n n^{-s}
,
\]
it is easy to see that $a_n$ counts the number of ways to write $n$ as a product of integers in $\mathcal M$, where each ordering of factors is only counted once. When $\mathcal M=\N$
, then these ways of writing $n$ as a product of smaller numbers are known as {\it multiplicative partitions}, and have been studied in a number of places in the literature; for example, the interested reader is referred to \cite{Andrews,ChamberlandJohnsonNadeauWu,MacMahon,Subbarao,ZaharescuZaki}.

We wish to study partition zeta functions over special subsets of $\mathcal P$ and arguments $s$ that lead to interesting relations. We begin by highlighting a few nice-looking examples. Let us recall a few identities from \cite{Robert} as examples of zeta function phenomena induced by suitable partition subsets of the form $\mathcal P_{\mathcal M}$. By summing over partitions into even parts we have a combinatorial formula to compute $\pi$:
\begin{equation}\label{pi/2}
\zeta_{\mathcal P_{2\mathbb N}}(2)=\frac{\pi}{2}
\end{equation}

Curiously, this partition zeta function produces an {\it odd} power of $\pi$, which does not occur in Euler's evaluations of $\zeta$ at even arguments. This simple formula belongs to an infinite family of 
identities arrived at by letting $z=\pi/m$ in Euler's product identity for the sine function
\begin{equation}\label{sine}
\operatorname{sin} z = z\prod_{n=1}^{\infty} \left(1 - \frac{z^2}{\pi^2n^2}\right).
\end{equation}
To describe this family, we take $\mathcal P_{m\mathbb N} \subsetneq \mathcal P$ to denote the set of partitions into multiples of $m>1$
. Summing over these partitions, we have identities such as
\begin{equation}\label{X}
\zeta_{\mathcal P_{m\mathbb N}}(2) = \frac{\pi}{m \, \sin \left(\frac{\pi}{m}\right)},\end{equation}

\begin{equation}\label{XX}
\zeta_{\mathcal P_{m\mathbb N}}(4) = \frac{\pi^2}{m^2 \, \sin \left(\frac{\pi}{m}\right) \sinh \left(\frac{\pi}{m}\right)},
\end{equation}
and similar (but increasingly complicated) formulas can be computed for $\zeta_{\mathcal P_{m\mathbb N}}(2^t)$ with $t \in \mathbb N$. Clearly \eqref{pi/2} is the case $m=2$ of the first identity.

All of these partition zeta formulas seem to hint at analogous algebraicity results like that of \eqref{zeta_even} for the Riemann zeta function. Although the first few examples show that the situation is more complicated, such formulas immediately lead one to suspect the existence of a similar theory. 


There is indeed a class of partition zeta functions with a structure much like that exhibited by classical zeta functions.

\begin{definition}\label{11}
We define the partition zeta function $\zeta_{\mathcal P}(\{s\}^k)$ to be the following sum over all partitions $\lambda$ of fixed length $\ell(\lambda) = k\geq 0$: 
\begin{equation*}
\zeta_{\mathcal P}(\{s\}^k) := \sum_{\ell(\lambda) = k} \frac{1}{n_{\lambda}^s}\  \  \  \left(\operatorname{Re}(s)>1\right) 
\end{equation*}
\end{definition}

We directly find that $\zeta_{\mathcal P}(\{s\}^0) = n_{\emptyset}^{-s} = 1$ and  $\zeta_{\mathcal P}(\{s\}^1) = \zeta(s)$. Proceeding much as Euler did to evaluate $\zeta(2k)$ (cf. \cite{Dunham}), for $k\geq 0$ we find that $\zeta_{\mathcal P}(\{2\}^k)$ is a rational multiple of $\zeta(2k)$:

\begin{equation}\label{pzv}
\zeta_{\mathcal P}(\{2\}^k)  = \frac{2^{2k - 1} - 1}{2^{2k-2}}\zeta(2k)
\end{equation}

%

It follows that $\zeta_{\mathcal P}(\{2\}^k)$ is of the form $``\pi^{2k} \times \text{rational}"$ for all positive $k$, much like the zeta values $\zeta(2k)$ given by Euler. Equation \eqref{pzv} also suggests the well-known Riemann zeta value 
$$\zeta(0) = \frac{2^{-2}}{2^{-1} - 1}\zeta_{\mathcal P}(\{2\}^0) = -1/2,$$
which is usually arrived at via analytic continuation. Furthermore, we can write $\zeta_{\mathcal P}(\{2^t\}^k)$ explicitly for all $t \in \mathbb N$ as finite combinations of well-known zeta values, also of the shape $``\pi^{2^tk} \times \text{rational}"$ \cite{Robert}.

Of course, it was Riemann's creative analytic continuation of the zeta function that led to stunning advances in number theory and analysis. Using general structural relations for partition zeta functions, the analytic continuation of $\zeta_{\mathcal P}(\{s\}^k)$
 is given in \cite{Robert} for one case: for fixed length $k=2$, we can write
\begin{equation}\label{shuffle}
\zeta_{\mathcal P}(\{s\}^2) = \frac{\zeta(2s) + \zeta(s)^2}{2}.
\end{equation}
Thus $\zeta_{\mathcal P}(\{s\}^2)$ inherits analytic continuation from the two zeta functions on the right. One would very much like to see further examples of analytic continuations of $\zeta_{\mathcal P}(\{s\}^k)$ 
as well as other partition zeta functions.

We remark that \eqref{shuffle} resembles the well-known ``series shuffle product'' relation for multiple zeta values (see \cite{Besser}). Hoffman gives lovely formulas using different notations, relating $\zeta_{\mathcal P}(\{s\}^k)$ to combinations of multiple zeta values \cite{Hoffman1}. As we will see below, the theory of partition zeta functions connects with multiple zeta values in other ways as well. 

\subsection{Evaluations}
In the previous section we saw a variety of simple closed forms for partition zeta functions, depending on the natures of the subsets of partitions being summed over. Different subsets induce different zeta phenomena. 
In what follows, we consider the evaluations of a small sampling of possible partition zeta functions having particularly pleasing formulas.

\subsubsection{Zeta functions for partitions with parts restricted by congruence conditions}


Our first line of study will concern sets $\mathcal M\subset\mathbb N$ that are defined by congruence conditions. Note by \eqref{DirichletProduct} that for disjoint sets $\mathcal M_1$ and $\mathcal M_2$,
\[
\zeta_{\mathcal P_{\mathcal M_1\cup\mathcal M_2}}(s)
=
\zeta_{\mathcal P_{\mathcal M_1}}(s)\zeta_{\mathcal P_{\mathcal M_2}}(s)
.
\]
Hence, to study any set of partitions determined by congruence conditions on the parts, it suffices to consider series of the form
\[
\zeta_{\mathcal P_{a+m\mathbb N}}(s)
,
\]
where $a\in\Z_{\geq0}$, $m\in\N$  (excluding the case $a=0$, $m=1$, where the zeta function clearly diverges), and $\mathcal P_{a+m\mathbb N}$ is partitions into parts congruent to $a$ modulo $m$. We see examples of the case $\zeta_{\mathcal P_{m\mathbb N}}(2^N)=\zeta_{\mathcal P_{0+m\mathbb N}}(2^N)$ in \eqref{X} and \eqref{XX}; we are interested in the most general case, with $s=n\in\mathbb N$.   


Our first main result is then the following, where $\Gamma$ is the usual gamma function of Euler and $e(x):=e^{2\pi ix}$. The proof will use an elegant and useful formula highlighted by Chamberland and Straub in \cite{ChamberlandStraub}, which we note was also inspired by previous work on multiplicative partitions in \cite{ChamberlandJohnsonNadeauWu}. In fact, the following result is a generalization of Theorem 8 of \cite{ChamberlandJohnsonNadeauWu} which in our notation corresponds to $a=m=1$. 

\begin{theorem}\label{mainthm}
For $n\geq2$, we have
\[
\zeta_{\mathcal P_{a+m\mathbb N}}(n)
=\Gamma(1+a/m)^{-n}\prod_{r=0}^{n-1}\Gamma\left(1+\frac{a-e(r/n)}{m}\right)
.
\]
\end{theorem}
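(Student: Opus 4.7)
The plan is to combine the Euler product for $\zeta_{\mathcal P_{a+m\mathbb N}}$ provided by \eqref{DirichletProduct} with the standard cyclotomic factorization $1-x^n=\prod_{r=0}^{n-1}(1-e(r/n)x)$ and then recognize the resulting double product as a ratio of Gamma values. Concretely, set $\mathcal M = a+m\mathbb N$ so that \eqref{DirichletProduct} gives
\[
\zeta_{\mathcal P_{a+m\mathbb N}}(n)=\prod_{k=1}^{\infty}\bigl(1-(a+mk)^{-n}\bigr)^{-1}
=\prod_{k=1}^{\infty}\prod_{r=0}^{n-1}\frac{a+mk}{a+mk-e(r/n)}.
\]
After dividing numerator and denominator by $m$, each factor has the shape $(k+\alpha)/(k+\beta_r)$ with $\alpha=a/m$ and $\beta_r=(a-e(r/n))/m$, so the target formula is equivalent to
\[
\prod_{r=0}^{n-1}\prod_{k=1}^{\infty}\frac{k+\alpha}{k+\beta_r}
=\frac{\prod_{r=0}^{n-1}\Gamma(1+\beta_r)}{\Gamma(1+\alpha)^{n}}.
\]

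The key technical point is that the inner products over $k$ diverge individually; only the full double product (or rather, the product over $r$ taken before the limit in $k$) converges. I would therefore work with partial products. The telescoping identity
\[
\prod_{k=1}^{N}\frac{k+\alpha}{k+\beta_r}
=\frac{\Gamma(1+\beta_r)}{\Gamma(1+\alpha)}\cdot\frac{\Gamma(N+1+\alpha)}{\Gamma(N+1+\beta_r)}
\]
is immediate from $\prod_{k=1}^{N}(k+z)=\Gamma(N+1+z)/\Gamma(1+z)$. Multiplying over $r=0,1,\dots,n-1$ yields
\[
\prod_{r=0}^{n-1}\prod_{k=1}^{N}\frac{k+\alpha}{k+\beta_r}
=\frac{\prod_{r=0}^{n-1}\Gamma(1+\beta_r)}{\Gamma(1+\alpha)^{n}}\cdot\frac{\Gamma(N+1+\alpha)^{n}}{\prod_{r=0}^{n-1}\Gamma(N+1+\beta_r)}.
\]

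What remains is to show the second factor tends to $1$ as $N\to\infty$; this is where the hypothesis $n\geq 2$ enters. Applying Stirling's approximation $\log\Gamma(N+1+z)=(N+\tfrac12+z)\log N-N+\tfrac12\log(2\pi)+o(1)$, the leading terms $(N+\tfrac12)\log N$ and $-N$ occur with the same total multiplicity $n$ in the numerator and denominator, while the $z$-dependent contribution to the exponent of $N$ is
\[
n\alpha-\sum_{r=0}^{n-1}\beta_r=\frac{1}{m}\sum_{r=0}^{n-1}e(r/n)=0,
\]
since the $n$th roots of unity sum to zero for $n\geq 2$. This cancellation is exactly the Chamberland–Straub phenomenon alluded to just before the theorem, and it is the only subtle step in the argument; everything else is bookkeeping. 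Substituting $\alpha=a/m$ and $\beta_r=(a-e(r/n))/m$ back in then delivers the stated identity. Interchanging the $r$- and $k$-products in the first step is justified a posteriori by the absolute convergence of the Euler product representation of $\zeta_{\mathcal P_{a+m\mathbb N}}(n)$ for $n\geq 2$ together with the $N$-uniform control just established.
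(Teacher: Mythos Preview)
Your argument is correct and follows essentially the same route as the paper: Euler product \eqref{DirichletProduct}, cyclotomic factorization of $1-(a+mk)^{-n}$, and then recognition of the resulting product as a Gamma ratio using the vanishing of $\sum_{r=0}^{n-1}e(r/n)$. The only difference is cosmetic: the paper invokes the Chamberland--Straub formula (Theorem~\ref{Armin11}) as a black box, whereas you reprove the needed special case by hand via partial products and Stirling's approximation; both arguments hinge on the same cancellation $n\alpha=\sum_r\beta_r$.
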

Theorem \ref{mainthm} has several applications. 
By considering the expansion of the logarithm of the gamma function, we easily obtain the following result, in which $\gamma$ is the Euler-Mascheroni constant and the principal branch of the logarithm is taken.

\begin{corollary}\label{FirstCor}
For any $m,n\geq2$, we have that
\begin{equation*}
\begin{aligned}
\log\left(\zeta_{\mathcal P_{a+m\mathbb N}}(n)\right)
&
=
n\log(1+a/m)+\frac{a(n+1)}m(1-\gamma)
-
\sum_{r=0}^{n-1}
\log\left(
1+\frac{a-e(r/n)}m
\right)
\\
&
+
\sum_{r=0}^{n-1}
\sum_{k\geq2}\frac{(-1)^k(\zeta(k)-1)\left(a^k+\left(a-e(r/n)\right)^k\right)}{km^k}
.
\end{aligned}
\end{equation*}
\end{corollary}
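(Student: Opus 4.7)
The plan is to take the logarithm of the identity from Theorem \ref{mainthm}, then substitute the classical power series expansion
$$\log\Gamma(1+z) = -\gamma z + \sum_{k\geq 2}\frac{(-1)^k\zeta(k)}{k}z^k\qquad (|z|<1)$$
into each Gamma factor: $z = a/m$ for the factor $\Gamma(1+a/m)^{-n}$, and $z = (a-e(r/n))/m$ for each of the $n$ factors in the product over $r$. This produces an explicit expansion of $\log\zeta_{\mathcal P_{a+m\mathbb{N}}}(n)$ in terms of the Euler--Mascheroni constant $\gamma$ and the Riemann zeta values $\zeta(k)$.

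To cast this in the form stated in the corollary, with $\zeta(k)-1$ in place of $\zeta(k)$ and explicit $\log(1+\cdot)$ factors, I would first write $\zeta(k) = 1 + (\zeta(k)-1)$ in the series above. Using the elementary identity $\sum_{k\geq 2}(-1)^k z^k/k = z - \log(1+z)$, the ``$1$'' portion produces the rearrangement
$$\log\Gamma(1+z) = (1-\gamma)z - \log(1+z) + \sum_{k\geq 2}\frac{(-1)^k(\zeta(k)-1)}{k}z^k,$$
whose $(\zeta(k)-1)$-tail converges on the larger disk $|z|<2$, since $\zeta(k)-1 = O(2^{-k})$. Substituting this into each logarithmic Gamma term, the $\log(1+\cdot)$ pieces directly assemble into $n\log(1+a/m) - \sum_{r}\log(1+(a-e(r/n))/m)$, while the $(\zeta(k)-1)$-pieces collect into the claimed double sum over $r$ and $k$. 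The linear-in-$z$ contributions, after cancellations driven by $\sum_{r=0}^{n-1}e(r/n) = 0$ (valid for $n\geq 2$), combine with the $-\gamma z$ pieces to produce the constant $\frac{a(n+1)}{m}(1-\gamma)$.

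The main obstacle is essentially bookkeeping: tracking the sign contributed by the exponent $-n$ on $\Gamma(1+a/m)$, keeping the principal branch of $\log\Gamma$ consistent with the prescription of the corollary, and verifying that the $(\zeta(k)-1)$-expansion is valid (i.e.\ that each $z = (a-e(r/n))/m$ falls in its disk of convergence) across the parameter range of the statement. Once these analytic technicalities are handled, the remainder is a direct collection of terms organized by the cancellations above, and a numerical sanity check in a small case (e.g.\ $a=0$, $m=n=2$, where $\zeta_{\mathcal P_{2\mathbb N}}(2) = \pi/2$ from \eqref{pi/2}) can confirm that the constants combine correctly.
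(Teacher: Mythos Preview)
Your proposal is correct and follows essentially the same route as the paper. The paper takes the logarithm of Theorem~\ref{mainthm} and directly invokes the expansion \eqref{573Nist} (the $|z|<2$ series with $\zeta(k)-1$), together with \eqref{ExpSumZero} and the checks that $|(a-e(r/n))/m|<2$ and that $1+(a-e(r/n))/m$ avoids the branch cut; you arrive at the same expansion by first writing Legendre's series and then splitting $\zeta(k)=1+(\zeta(k)-1)$, which is just a rederivation of \eqref{573Nist}.
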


When $a=0$ and $m\geq2$, we obtain the following strikingly simple formula, which is similar to Theorem 7 of \cite{ChamberlandJohnsonNadeauWu} that in our notation corresponds to the case $a=m=1$.
\begin{corollary}\label{SecondCor}
For any $m,n\geq2$, we have that
\[
\log\left(\zeta_{\mathcal P_{m\mathbb N}}(n)\right)
=
n
\sum_{\substack{k\geq2\\ n|k}}\frac{\zeta(k)}{km^{k}}
.
\]
\end{corollary}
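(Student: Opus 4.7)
The plan is to specialize Theorem \ref{mainthm} to the case $a=0$ and then expand the resulting logarithms using the classical Taylor series for $\log\Gamma(1+z)$ in terms of the Riemann zeta values, exploiting orthogonality of roots of unity to pick out only the indices divisible by $n$.

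Concretely, I would first set $a=0$ in Theorem \ref{mainthm}. Since $\Gamma(1+0/m)=\Gamma(1)=1$, the prefactor is trivial, and the theorem reduces to
\[
\zeta_{\mathcal P_{m\mathbb N}}(n)=\prod_{r=0}^{n-1}\Gamma\!\left(1-\frac{e(r/n)}{m}\right).
\]
Taking logarithms (using the principal branch, which is fine because $|e(r/n)/m|=1/m\leq 1/2$ places each argument in the half-plane $\re(z)>1/2$), the claim reduces to evaluating $\sum_{r=0}^{n-1}\log\Gamma\bigl(1-e(r/n)/m\bigr)$.

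Next I would invoke the standard expansion
\[
\log\Gamma(1+z)=-\gamma z+\sum_{k\geq 2}\frac{(-1)^k\zeta(k)}{k}z^k,\qquad |z|<1,
\]
applied with $z=-e(r/n)/m$. Because the series converges absolutely and uniformly in $r$ for $m\geq 2$, I can interchange the sums over $r$ and $k$, yielding
\[
\sum_{r=0}^{n-1}\log\Gamma\!\left(1-\frac{e(r/n)}{m}\right)=\frac{\gamma}{m}\sum_{r=0}^{n-1}e(r/n)+\sum_{k\geq 2}\frac{\zeta(k)}{k m^k}\sum_{r=0}^{n-1}e(rk/n).
\]
The first inner sum is a complete geometric sum of $n$-th roots of unity and vanishes since $n\geq 2$. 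The second inner sum is $n$ if $n\mid k$ and $0$ otherwise, by the familiar orthogonality of characters on $\mathbb Z/n\mathbb Z$. Combining these observations gives precisely
\[
\log\zeta_{\mathcal P_{m\mathbb N}}(n)=n\sum_{\substack{k\geq 2\\ n\mid k}}\frac{\zeta(k)}{km^k}.
\]

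There is essentially no serious obstacle: the proof is an almost mechanical consequence of Theorem \ref{mainthm} combined with the Weierstrass/Euler series for $\log\Gamma$. The only real care needed is to justify the interchange of summation (clean by absolute convergence, bounded above by $\sum_{k\geq 2}\zeta(k)/(km^k)\cdot n < \infty$) and to note that the vanishing of the $\gamma$-contribution and of the $k$ not divisible by $n$ relies crucially on the hypothesis $n\geq 2$.
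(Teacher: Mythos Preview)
Your proof is correct and follows essentially the same route as the paper: specialize Theorem~\ref{mainthm} at $a=0$, apply the Legendre series \eqref{LogGammaSecondFormula} for $\log\Gamma(1+z)$, and use the orthogonality relation $\sum_{r=0}^{n-1}e(rk/n)=n\cdot\mathbf 1_{n\mid k}$ to kill both the $\gamma$-term and the terms with $n\nmid k$. The paper's own proof is terser but identical in substance.
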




\subsubsection{Connections to ordinary Riemann zeta values}

In addition to providing interesting formulas for values of more exotic partition-theoretic zeta functions, the above results also lead to curious formulas for the classical Riemann zeta function. In fact, $\zeta(s)$ 
is itself a partition zeta function, summed over prime partitions, so it is perhaps not too surprising to find that we can learn something about it from a partition-theoretic perspective. Then we continue the theme of evaluations by recording a few results expressing the value of $\zeta$ at integer argument $n>1$ in terms of gamma factors. 

In the first, curious identity, let $\mu$ denote the classical M\"obius function. We point out that this is essentially a generalization of a formula for the case $a=m=1$ given in Equation 11 of \cite{ChamberlandJohnsonNadeauWu}. 

\begin{corollary}\label{FourthCor}
For all $m,n\geq2$, we have
\[
\zeta(n)=m^n\sum_{\substack{k\geq 1}}\frac{\mu(k)}k\sum_{r=0}^{nk-1}\log\left(\Gamma\left(1-\frac{e\left(\frac r{nk}\right)}m\right)\right)
.
\]
\end{corollary}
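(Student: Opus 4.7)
The plan is to deduce the corollary from Corollary \ref{SecondCor} by a Möbius inversion argument. First, I would observe that combining Theorem \ref{mainthm} (at $a=0$) with Corollary \ref{SecondCor} yields the identity
\[
\sum_{r=0}^{n-1}\log\Gamma\!\left(1-\frac{e(r/n)}{m}\right)
=\sum_{j\geq1}\frac{\zeta(nj)}{j\,m^{nj}},
\]
after writing the sum $\sum_{k\geq2,\ n\mid k}$ as a sum over $k=nj$, $j\geq1$, and canceling the factor of $n$.

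Next, I would replace $n$ by $nk$ throughout (valid since $nk\geq n\geq2$ for every $k\geq1$), multiply both sides by $\mu(k)/k$, and sum over $k\geq1$. On the right this produces the double sum
\[
\sum_{k\geq1}\sum_{j\geq1}\frac{\mu(k)}{kj}\cdot\frac{\zeta(nkj)}{m^{nkj}}.
\]
Grouping terms by the new index $N=kj$ and using the fundamental identity $\sum_{k\mid N}\mu(k)=[N=1]$ collapses the whole double sum to $\zeta(n)/m^n$. Rearranging yields the claimed formula.

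The only non-routine ingredient is justifying the interchange of the sums over $k$ and $j$, but this follows from absolute convergence: since $\zeta(nkj)=1+O(2^{-nkj})$ and $m\geq2$, the double series is dominated by $\sum_{k,j}\frac{1}{kj\,m^{nkj}}$, which converges comfortably. The main conceptual step, and the one I would emphasize in the writeup, is recognizing the right-hand side of Corollary \ref{SecondCor} as a Dirichlet-type convolution in the parameter $n$ that is amenable to Möbius inversion — once that is seen, the rest is bookkeeping.
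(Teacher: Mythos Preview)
Your proposal is correct and follows essentially the same route as the paper: both arguments combine Corollary~\ref{SecondCor} (rewritten as $\log\zeta_{\mathcal P_{m\mathbb N}}(n)=\sum_{j\geq1}\zeta(nj)/(j\,m^{nj})$) with a M\"obius inversion in the parameter $n$, and then invoke Theorem~\ref{mainthm} at $a=0$ to pass to the $\Gamma$-expression. The only cosmetic difference is that the paper first states the abstract inversion rule $g(n)=\sum_{k\geq1}f(kn)/k \Rightarrow f(n)=\sum_{k\geq1}\mu(k)g(kn)/k$ and then substitutes, whereas you carry out the same computation explicitly by grouping on $N=kj$; your additional remark on absolute convergence is a welcome bit of rigor the paper leaves implicit.
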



The next identity gives $\zeta(n)$ in terms of the $n$th derivative of a product of gamma functions.  The authors were not able to find this formula in the literature; however, given the well-known connections between $\Gamma$ and $\zeta$, as well as the known example below the following theorem, it is possible that the identity is known.
\begin{theorem}\label{RobertCor}
For integers $n>1$, we have
\[
\zeta(n)=\frac{1}{n!} \lim_{z\to 0^+} \frac{\mathrm d^n}{\mathrm d z^n}\prod_{j=0}^{n-1}\Gamma\left(1-ze(j/n)\right)
.
\]
\end{theorem}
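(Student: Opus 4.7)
The plan is to show that $F(z):=\prod_{j=0}^{n-1}\Gamma\bigl(1-ze(j/n)\bigr)$ is holomorphic in a neighborhood of $z=0$ (for $|z|$ small no factor meets a nonpositive integer) and satisfies $F(0)=\Gamma(1)^n=1$. Consequently the stated limit is exactly $F^{(n)}(0)/n!=[z^n]F(z)$, and the theorem reduces to reading off this single Taylor coefficient.

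I would extract it by taking logarithms and invoking the classical expansion
\[
\log\Gamma(1+x)=-\gamma x+\sum_{k\geq 2}\frac{(-1)^k\zeta(k)}{k}x^k,\qquad |x|<1.
\]
Substituting $x=-ze(j/n)$, the two sign factors cancel and
\[
\log\Gamma\bigl(1-ze(j/n)\bigr)=\gamma z\,e(j/n)+\sum_{k\geq 2}\frac{\zeta(k)}{k}z^k e(jk/n).
\]
Summing over $j=0,\dots,n-1$ and using the orthogonality relation $\sum_{j=0}^{n-1}e(jk/n)=n$ when $n\mid k$ and $0$ otherwise, together with the fact that $n\geq 2$ kills the $k=1$ (and hence the Euler--Mascheroni) contribution, one obtains
\[
\log F(z)=n\sum_{\substack{k\geq 2\\ n\mid k}}\frac{\zeta(k)}{k}z^k=\sum_{m\geq 1}\frac{\zeta(mn)}{m}z^{mn}.
\]
This is essentially the identity that underlies Corollary \ref{SecondCor}, here viewed as an equality of formal/convergent power series in $z$ rather than after specialising $z=1/m$.

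Exponentiating, the right-hand side starts at $z^n$ and contains only powers $z^{mn}$, so $F(z)=1+\zeta(n)z^n+O(z^{2n})$ as $z\to 0^+$, and the coefficient of $z^n$ is exactly $\zeta(n)$, as claimed. There is no real obstacle: the only routine check is absolute convergence of the double series so that the finite sum over $j$ may be interchanged with the sum over $k$, which holds for all sufficiently small $|z|$. In particular, nothing deeper than Euler's series for $\log\Gamma(1+x)$ and the root-of-unity orthogonality is needed.
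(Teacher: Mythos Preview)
Your argument is correct. The key observation that $F$ is holomorphic near $0$ so that the limit is simply $F^{(n)}(0)/n!=[z^n]F(z)$, together with the computation of $\log F(z)$ via Legendre's series and root-of-unity orthogonality, cleanly yields $F(z)=1+\zeta(n)z^n+O(z^{2n})$.

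However, your route differs from the paper's. The paper does not compute the Taylor expansion of $F$ directly. Instead it starts from the identity
\[
\prod_{k\geq 1}\Bigl(1-\frac{z^n}{k^n}\Bigr)^{-1}=1+z^n\sum_{k\geq 1}\frac{1}{k^{n}\prod_{j\leq k}\bigl(1-\frac{z^n}{j^n}\bigr)},
\]
coming from \cite{Robert}, which immediately shows that the product equals $1+\zeta(n)z^n+o(z^n)$ as $z\to 0^+$; L'Hospital's rule is then invoked $n$ times, and only at the end is Theorem~\ref{Armin11} used to rewrite the infinite product as $\prod_{j}\Gamma(1-ze(j/n))$. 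Your approach is more self-contained (it needs neither the identity from \cite{Robert} nor Theorem~\ref{Armin11}) and is in fact the same computation that underlies the proof of Theorem~\ref{SecondTheorem}, specialised to extract a single coefficient. The paper's approach, on the other hand, ties the result more visibly into the partition-zeta framework and yields the companion formula of Corollary~\ref{RobertCor2} with no extra work.
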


\begin{example}As an example of implementing the above identity, take $n=2$; then using Euler's well-known product formula for the sine function, it is easy to check that
\[
\zeta(2)=\frac{1}{2!} \lim_{z\to 0^+} \frac{\mathrm d^2}{\mathrm d z^2}\Gamma\left(1+z\right)\Gamma\left(1-z\right)
=\frac{1}{2!} \lim_{z\to 0^+} \frac{\mathrm d^2}{\mathrm d z^2}\frac{\pi z}{\sin(\pi z)}=\frac{\pi^2}{6}
.
\]
\end{example} 

This last formula for $\zeta(n)$, following from a formula in \cite{Robert} together with the preceding theorem, is analogous to some extent to the classical identity $\sin(n)=\frac{e^{in}-e^{-in}}{2i}$.
\begin{corollary}\label{RobertCor2}
For integers $n>1$, we have
\[
\zeta(n)=\lim_{z\to 0^+}\frac{\prod_{j=0}^{n-1}\Gamma\left(1-ze(j/n)\right)-\prod_{j=0}^{n-1}\Gamma\left(1-ze(j/n)\right)^{-1}}{2z^n}
.
\]
\end{corollary}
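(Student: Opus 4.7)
The plan is to exploit the Taylor expansion of
\[
P(z):=\prod_{j=0}^{n-1}\Gamma\bigl(1-ze(j/n)\bigr)
\]
around $z=0$, which is the same expansion that powers Theorem \ref{RobertCor}. First I would pass to logarithms and invoke the classical series
\[
\log\Gamma(1-u)=\gamma u+\sum_{k\geq 2}\frac{\zeta(k)}{k}u^k
\qquad(|u|<1)
\]
to write
\[
\log P(z)=\gamma z\sum_{j=0}^{n-1}e(j/n)+\sum_{k\geq 2}\frac{\zeta(k)}{k}z^{k}\sum_{j=0}^{n-1}e(jk/n).
\]
Orthogonality of $n$th roots of unity makes the inner sum equal to $n$ when $n\mid k$ and $0$ otherwise (and in particular kills the $\gamma$ term, since $n\geq 2$), so the expression collapses to
\[
\log P(z)=\sum_{l\geq 1}\frac{\zeta(ln)}{l}z^{ln}=\zeta(n)z^{n}+O\bigl(z^{2n}\bigr).
\]
This is the identity from \cite{Robert} that the corollary refers to; it is also precisely the fact that forces $P(z)=1+\zeta(n)z^{n}+O(z^{2n})$, which is how Theorem \ref{RobertCor} was proved.

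Next I would rewrite the numerator on the right-hand side of the claimed formula using $P(z)=\exp(\log P(z))$:
\[
P(z)-P(z)^{-1}=e^{\log P(z)}-e^{-\log P(z)}=2\sinh\bigl(\log P(z)\bigr).
\]
Since $\log P(z)\to 0$ as $z\to 0^{+}$ and $\sinh u=u+O(u^{3})$, the above step combined with the expansion of $\log P(z)$ gives
\[
P(z)-P(z)^{-1}=2\zeta(n)z^{n}+O\bigl(z^{2n}\bigr).
\]
Dividing by $2z^{n}$ and letting $z\to 0^{+}$ yields $\zeta(n)$, as desired. This decomposition also transparently matches the analogy with $\sin(n)=(e^{in}-e^{-in})/(2i)$ emphasized in the excerpt: $P$ and $P^{-1}$ play the roles of $e^{\pm i n}$, and halving their difference extracts the ``odd'' part, whose leading coefficient at order $z^{n}$ is exactly $\zeta(n)$.

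There is no genuinely hard step here. The only things that need checking are routine: that the Taylor series for $\log\Gamma(1-u)$ is applied at points $ze(j/n)$ lying inside its radius of convergence (true for $|z|<1$), and that the double sum may be interchanged (true by absolute convergence on the same disk). The one conceptually substantive moment is the collapse from the orthogonality of $n$th roots of unity, and that is precisely the ingredient imported from \cite{Robert} via Theorem \ref{RobertCor}.
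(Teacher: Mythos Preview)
Your proof is correct, but it follows a different route from the paper's.

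The paper proves Corollary~\ref{RobertCor2} by invoking two companion identities from Theorem~1.1 of \cite{Robert}: one expresses $\prod_{k\in\mathbb N}(1-z^n/k^n)^{-1}$ as $1+z^n\sum_k(\cdots)$, the other expresses $\prod_{k\in\mathbb N}(1-z^n/k^n)$ as $1-z^n\sum_k(\cdots)$. Subtracting these, dividing by $2z^n$, letting $z\to 0^+$ so that each inner sum tends to $\zeta(n)$, and then rewriting the two products as $\prod_j\Gamma(1-ze(j/n))^{\pm 1}$ via Theorem~\ref{Armin11} gives the result. Your argument bypasses these telescoping identities entirely: you compute $\log P(z)$ directly from Legendre's series and orthogonality (this computation does appear in the paper, but in the proof of Theorem~\ref{SecondTheorem}, not here), and then observe that $P-P^{-1}=2\sinh(\log P)$, so the leading term $\zeta(n)z^n$ of $\log P$ survives after division by $2z^n$. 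Your approach is more self-contained---it does not need the specific expansions from \cite{Robert}---and the appearance of $\sinh$ makes the analogy with $\sin(n)=(e^{in}-e^{-in})/(2i)$ genuinely transparent, whereas in the paper that analogy is only a remark. The paper's approach, on the other hand, ties the corollary more tightly to the partition-theoretic machinery of \cite{Robert} and to the proof of Theorem~\ref{RobertCor}.
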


\subsubsection{Zeta functions for partitions of fixed length}

We now consider zeta sums of the shape $\zeta_{\mathcal P}(\{s\}^k)$ as in Definition \ref{11}. Our first aim will be to extend \eqref{pzv} above (which is Corollary 2.4 of \cite{Robert}).  
%
As we shall see below, these zeta values are special cases of sums considered by Hoffman in \cite{Hoffman1}.


Let $[z^n]f$ represent the coefficient of $z^n$ in a power series $f$. Using this notation, we show the following, which in particular gives an algorithmic way to compute each $\zeta_{\mathcal P}(\{m\}^k)$ in terms of Riemann zeta values for $m\in\mathbb N_{\geq 2}$.
\begin{theorem}\label{SecondTheorem}
For all $m\geq2$, $k\in\N$, we have
\begin{align*}
\zeta_{\mathcal P}(\{m\}^k)
&=
\pi^{mk}[z^{mk}]\prod_{r=0}^{m-1}\Gamma\left(1-\frac{z}{\pi}e(r/m)\right)\  \\
&=
\pi^{mk}[z^{mk}]\operatorname{exp}\left(\sum_{j\geq1}\frac{\zeta(mj)}{j}\left(\frac{z}{\pi}\right)^{mj}\right)
.
\end{align*}
\end{theorem}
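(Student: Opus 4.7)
The strategy is to set up two generating-function computations that both encode $\zeta_{\mathcal P}(\{m\}^k)$, then match coefficients.

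First I would use Proposition \ref{productsum}, applied with the substitution $f(n) = n^{-m}$ and a variable $z$ tracking the length rather than the size. Inspecting the proof of Proposition \ref{productsum} (expanding the product term by term) shows that, for $m \ge 2$ and sufficiently small $|z|$,
\[
\prod_{n=1}^{\infty}\left(1 - \frac{z}{n^{m}}\right)^{-1}
= \sum_{\lambda \in \mathcal P} \frac{z^{\ell(\lambda)}}{n_\lambda^{m}}
= \sum_{k \ge 0} \zeta_{\mathcal P}(\{m\}^{k})\, z^{k}.
\]
After the rescaling $z \mapsto (z/\pi)^{m}$, the left-hand side becomes $\prod_{n \ge 1}\bigl(1 - (z/\pi)^{m}/n^{m}\bigr)^{-1}$, and the coefficient of $z^{mk}$ on the right is $\pi^{-mk}\zeta_{\mathcal P}(\{m\}^{k})$.

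Next I would use the Weierstrass product
\[
\Gamma(1+x)^{-1} = e^{\gamma x}\prod_{n=1}^{\infty}\left(1 + \frac{x}{n}\right) e^{-x/n}
\]
with $x = -w\, e(r/m)$, and then take the product over $r = 0,1,\dots,m-1$. The key cancellation is the identity $\sum_{r=0}^{m-1} e(r/m) = 0$ (valid for $m \ge 2$), which kills both the $\gamma$-exponentials and the $e^{-x/n}$ regularizing factors. Using also $\prod_{r=0}^{m-1}(1 - y\,e(r/m)) = 1 - y^{m}$, one obtains
\[
\prod_{r=0}^{m-1}\Gamma\bigl(1 - w\,e(r/m)\bigr)
= \prod_{n=1}^{\infty}\frac{1}{1 - w^{m}/n^{m}}.
\]
Setting $w = z/\pi$ and comparing with the generating function above yields the first claimed equality after extracting $[z^{mk}]$ and multiplying by $\pi^{mk}$.

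For the second equality I would take logarithms, using the standard series $\log\Gamma(1+x) = -\gamma x + \sum_{k \ge 2} \frac{(-1)^{k}\zeta(k)}{k} x^{k}$ applied to $x = -w\,e(r/m)$, and sum over $r$. The Ramanujan-style orthogonality relation $\sum_{r=0}^{m-1} e(rk/m) = m$ if $m \mid k$ and $0$ otherwise collapses the double sum to
\[
\sum_{r=0}^{m-1}\log\Gamma\bigl(1 - w\,e(r/m)\bigr) = \sum_{j \ge 1}\frac{\zeta(mj)}{j}\, w^{mj},
\]
and exponentiating (with $w = z/\pi$) gives the second formula. The only delicate point is justifying the rearrangements of the infinite products and the term-by-term logarithmic expansion; this is routine since everything converges absolutely for $|z|$ small, so I expect no serious obstacle beyond correctly identifying the two key character-sum cancellations that make $\prod_r \Gamma(1 - w\,e(r/m))$ collapse onto the variable $w^{m}$.
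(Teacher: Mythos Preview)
Your proposal is correct and follows essentially the same architecture as the paper's proof: set up the generating function $\sum_{k\ge 0}\zeta_{\mathcal P}(\{m\}^k)(z/\pi)^{mk}=\prod_{n\ge 1}(1-(z/\pi)^m/n^m)^{-1}$, identify this product with $\prod_{r=0}^{m-1}\Gamma(1-(z/\pi)e(r/m))$, and then take logarithms using Legendre's series together with the orthogonality relation $\sum_{r=0}^{m-1}e(rk/m)=m\cdot\mathbf{1}_{m\mid k}$.

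The only noteworthy difference is in the middle step. The paper factors $1-(z/\pi)^m/n^m$ over the $m$th roots of unity and then invokes Theorem~\ref{Armin11} (the Chamberland--Straub gamma-quotient formula) as a black box to collapse the double product into $\prod_r\Gamma(1-(z/\pi)e(r/m))$. You instead establish this identity directly from the Weierstrass product for $\Gamma(1+x)^{-1}$, using $\sum_{r}e(r/m)=0$ to kill the $e^{\gamma x}$ and $e^{-x/n}$ factors and $\prod_r(1-y\,e(r/m))=1-y^m$ to recombine the linear factors. Your route is slightly more self-contained (it avoids importing Theorem~\ref{Armin11}), while the paper's route is more in keeping with its overall program of deriving everything from that single gamma-quotient identity; substantively they are the same computation.
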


Generalizing the comments just below \eqref{pzv}, the next corollary follows directly from Theorem \ref{SecondTheorem} (using the well-known fact that $\zeta(k)\in\mathbb Q\pi^k$ for even integers $k$).

\begin{corollary}\label{RationalityCor}
For $m\in 2\N$ even, we have that
\[
\zeta_{\mathcal P}(\{m\}^k)\in\mathbb Q\pi^{mk}
.
\]
\end{corollary}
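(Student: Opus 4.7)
The plan is to apply Theorem \ref{SecondTheorem} directly and track rationality through the power series manipulation. Starting from the second displayed formula in that theorem, I would write
\[
\zeta_{\mathcal P}(\{m\}^k) = \pi^{mk}[z^{mk}]\exp\!\left(\sum_{j\geq1}\frac{\zeta(mj)}{j}\left(\frac{z}{\pi}\right)^{mj}\right),
\]
and rewrite the exponent as $\sum_{j\geq1}\frac{\zeta(mj)}{j\,\pi^{mj}}z^{mj}$. The task then reduces to showing that the coefficient of $z^{mk}$ in the exponential of this series lies in $\mathbb{Q}$.

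The key observation is that since $m$ is even, the index $mj$ is an even positive integer for every $j\geq 1$. Euler's evaluation of the Riemann zeta function at even positive integers therefore yields $\zeta(mj)\in\mathbb{Q}\pi^{mj}$, so that each coefficient $\frac{\zeta(mj)}{j\pi^{mj}}$ is rational. Hence the inner series is an element of $\mathbb{Q}[[z]]$ (with no constant term).

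Next I would invoke the general fact that $\exp$ sends elements of $z\,\mathbb{Q}[[z]]$ into $\mathbb{Q}[[z]]$, either by citing it or by noting that the coefficients of $\exp(g(z))$ for $g(z)\in z\,\mathbb{Q}[[z]]$ can be written as finite polynomial expressions in the coefficients of $g$ with rational coefficients (via the standard Bell-polynomial or multinomial expansion of the exponential series). Consequently $[z^{mk}]\exp(\cdots)\in\mathbb{Q}$, and multiplying by $\pi^{mk}$ gives $\zeta_{\mathcal P}(\{m\}^k)\in\mathbb{Q}\pi^{mk}$, as required.

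There is essentially no obstacle here beyond ensuring the rationality is carried cleanly through the composition with $\exp$; the argument is a direct consequence of Theorem \ref{SecondTheorem} combined with Euler's classical formula \eqref{zeta_even}.
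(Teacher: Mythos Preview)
Your argument is correct and is exactly the approach indicated in the paper: the corollary is stated as following directly from Theorem \ref{SecondTheorem} together with the well-known fact that $\zeta(k)\in\mathbb{Q}\pi^k$ for even $k$. Your additional remark that $\exp$ carries $z\,\mathbb{Q}[[z]]$ into $\mathbb{Q}[[z]]$ makes explicit the only step the paper leaves implicit.
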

\begin{remark}
This can also be deduced from Theorem 2.1 of \cite{Hoffman1}.
\end{remark}


To conclude this section, we note one explicit method for computing the values $\zeta_{\mathcal P}(\{m\}^k)$ at integral $k,m$ (especially if $m$ is even, in which case the zeta values below are completely elementary).

\begin{corollary}\label{DeterminantCor}
For $m\geq2, k\in\N$, and $j\geq i$, 
set 
\[
\alpha_{i,j}
:=
\zeta(m(j-i+1))\frac{(k-i)!}{\pi^{m(j-i+1)}(k-j)!}
.\]
Then we have 
\[
\zeta_{\mathcal P}(\{m\}^k)
=
\frac{\pi^{mk}}{k!} \det
\begin{pmatrix} 
\alpha_{1,1}&\alpha_{1,2}&\alpha_{1,3}&\ldots&\alpha_{1,k}
\\
-1 & \alpha_{2,2} &\alpha_{2,3} &\ldots & \alpha_{2,k}
\\
0&-1& \alpha_{3,3}&\ldots & \alpha_{3,k}
\\
\vdots &\vdots&\vdots &\ddots &\vdots
\\
0&0&\ldots&-1&\alpha_{k,k} 
\end{pmatrix}
.
\]
\end{corollary}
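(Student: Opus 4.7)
The plan is to use Theorem~\ref{SecondTheorem} to express $\zeta_{\mathcal P}(\{m\}^k)$ as a coefficient of an exponential generating function, and then show this coefficient equals $D_k/k!$ by matching recursions. Setting $p_\ell := \zeta(m\ell)/\pi^{m\ell}$ and substituting $v := (z/\pi)^m$ in Theorem~\ref{SecondTheorem}---only powers of $v$ appear in the exponential---yields
\[
\zeta_{\mathcal P}(\{m\}^k) \;=\; \pi^{mk}\,\tilde h_k, \qquad \tilde h_k := [v^k]\exp\!\left(\sum_{j\geq 1}\frac{p_j}{j}\,v^j\right),
\]
so the Corollary is equivalent to $D_k = k!\,\tilde h_k$.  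A standard logarithmic differentiation of $E(v):=\sum_n \tilde h_n v^n$ gives $E'(v) = \bigl(\sum_{j\geq 1} p_j v^{j-1}\bigr)E(v)$, and matching the coefficient of $v^{k-1}$ yields Newton's recursion $k\,\tilde h_k = \sum_{j=1}^k p_j\,\tilde h_{k-j}$ with $\tilde h_0 = 1$.  It therefore suffices to prove that $D_k/k!$ satisfies the same recursion.

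\textbf{Simplifying the determinant.}  I would first conjugate $D_k$ by $\operatorname{diag}(1/(k-i)!)$ on the left and $\operatorname{diag}((k-j)!)$ on the right.  The two diagonal determinants are reciprocals of each other, so this transformation is determinant-preserving; a short computation converts $D_k$ into $e_k := \det N$, where $N$ is the upper Hessenberg matrix with entries $p_{j-i+1}$ for $j\geq i$ and subdiagonal entries $-(k-i)$ at positions $(i+1,i)$.  This normalization strips away all bookkeeping factorials from the interior of the matrix and exposes the Newton-type recursion structure that $e_k$ should satisfy.

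\textbf{Expansion and conclusion.}  Next I would expand $\det N$ along its first row, and for each minor $M_{1j}$ (obtained by deleting row $1$ and column $j$) iteratively expand along the leftmost column.  At every stage the leftmost column of the current minor has a single nonzero entry at the top, yielding the successive factors $-(k-1),\,-(k-2),\,\ldots,\,-(k-j+1)$; after $j-1$ such strippings what remains is the bottom-right $(k-j)\times(k-j)$ block of $N$, which a direct re-indexing check identifies as $N^{(k-j)}$.  This gives $M_{1j} = (-1)^{j-1}\frac{(k-1)!}{(k-j)!}\,e_{k-j}$, and the row expansion of $\det N$ becomes
\[
e_k \;=\; \sum_{j=1}^k \frac{(k-1)!}{(k-j)!}\,p_j\,e_{k-j}.
\]
Dividing by $k!$ recovers Newton's recursion for $e_k/k!$, and the base case $e_0 = 1 = \tilde h_0$ forces $D_k/k! = \tilde h_k$, completing the proof.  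The main difficulty is precisely this re-indexing verification: one must confirm that after deleting rows $2,\ldots,j$ and columns $1,\ldots,j-1$ of $N$, the residual subdiagonal entry at position $(r+1,r)$ equals $-((k-j)-r)$, matching $N^{(k-j)}$.  This is short but is the one genuinely combinatorial step closing the induction and yielding $\zeta_{\mathcal P}(\{m\}^k) = \pi^{mk}D_k/k!$.
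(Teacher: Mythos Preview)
Your proposal is correct. The paper's proof is much shorter because it invokes two classical identities stated earlier in the paper as black boxes: the Fa\`a di Bruno formula \eqref{FdB}, which expresses the exponential of a power series via complete Bell polynomials, and the known Hessenberg determinant formula \eqref{determinant} for $B_k(a_1,\dots,a_k)$. Setting $a_j=(j-1)!\,\zeta(mj)/\pi^{mj}$ and comparing with \eqref{PowSer1} gives $\zeta_{\mathcal P}(\{m\}^k)=\pi^{mk}B_k(a_1,\dots,a_k)/k!$, and the stated determinant drops out immediately. Your route is different in that you bypass the Bell-polynomial machinery entirely: you derive the Newton recursion for the coefficients $\tilde h_k$ by logarithmic differentiation, normalize the determinant by a diagonal conjugation, and then show directly via iterated cofactor expansion that the normalized determinant obeys the same recursion. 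In effect you are re-proving, in this special case, precisely the classical identities the paper quotes. The trade-off is clear: the paper's argument is a two-line appeal to standard formulas, while yours is self-contained and makes the combinatorial mechanism transparent (the Newton identity is visible, and one sees exactly why the Hessenberg structure encodes it). One small remark: your sentence ``substituting $v:=(z/\pi)^m$'' is slightly misleading as written, since with that substitution the exponent becomes $\sum_j\zeta(mj)v^j/j$ rather than $\sum_j p_jv^j/j$; the identity $\zeta_{\mathcal P}(\{m\}^k)=\pi^{mk}\tilde h_k$ you state is nevertheless correct, the extra $\pi^{-mk}$ coming from the scaling in the coefficient extraction.
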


\begin{remark}
There are results resembling these in Knopfmacher and Mays \cite{KM}.
\end{remark}

\subsection{Analytic continuation and $p$-adic continuity}




If we jump forward about 100 years from the pathbreaking work of Euler concerning special values of the Riemann zeta function at 
even 
integers, we arrive at the famous work of Riemann in connection with prime number theory. Namely, in 1859, Riemann brilliantly described the most significant properties of $\zeta(s)$ following that of an Euler product: the analytic continuation and functional equation for $\zeta(s)$. It is for this reason, of course, that the zeta function is named after Riemann, and not Euler, who had studied this function in some detail, and even conjectured a related functional equation. In particular, this analytic continuation allowed Riemann to bring the zeta function, and indeed the relatively new field of complex analysis, to the forefront of number theory by connecting its roots to the distribution of prime numbers.

It is natural therefore, whenever one is faced with new zeta functions, to ask about their prospect for analytic continuation. Here, we offer a brief study of some of these properties, in particular showing that the situation for our zeta functions is much more singular. Partition-theoretic zeta functions in fact naturally give rise to functions with essential singularities. Here, we use
Corollary \ref{SecondCor}
to study the continuation properties of partition zeta functions over partitions $\mathcal P_{m\mathbb N}$ into multiples of $m>1$. In order to state the result we first define, for any $\varepsilon >0$, the right half-plane $\mathbb H_{\varepsilon}:=\{z\in\C : \operatorname{Re}(z)>\varepsilon\}$, and we denote by $\frac1{\N}$ the set $\{1/n : n\in\N\}$.
\begin{corollary}\label{ThirdCor}
For any $\varepsilon>0$ and $m>1$, $\zeta_{\mathcal P_{m\mathbb N}}(s)$ has a meromorphic extension to $\mathbb H_{\varepsilon}$ with poles exactly at $\mathbb H_{\varepsilon}\cap\frac1{\N}$. In particular, there is no analytic continuation beyond the right half-plane $\operatorname{Re}(s)>0$, as there would be an essential singularity at $s=0$.
\end{corollary}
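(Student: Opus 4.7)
The plan is to pass the formula of Corollary \ref{SecondCor} from integer argument $n$ to complex $s$, and to treat the resulting series as an explicit meromorphic continuation. First I would reindex Corollary \ref{SecondCor}: the change of variable $k\mapsto nk$ in that identity gives the equivalent form
$$\log \zeta_{\mathcal P_{m\mathbb N}}(n) = \sum_{k\geq 1} \frac{\zeta(nk)}{k\, m^{nk}}\qquad(n\in\N,\ n\geq 2).$$
Exactly the same identity, with $n$ replaced by any complex $s$ with $\operatorname{Re}(s)>1$, follows directly from the logarithm of the Euler product $\prod_{j\geq 1}(1-(mj)^{-s})^{-1}$ after swapping the resulting double sum, which is legal by absolute convergence. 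This representation will be the vehicle for continuation.

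Next, for a fixed $\varepsilon>0$, I would split the series $\sum_{k\geq 1}\zeta(sk)/(k m^{sk})$ at $K:=\lfloor 1/\varepsilon\rfloor$. The head is a finite sum of meromorphic functions whose only singularities inside $\mathbb H_\varepsilon$ are the simple poles of $\zeta(sk)$ at $s=1/k$ for $1\leq k\leq K$, inherited from the pole of $\zeta$ at $1$. For the tail, on a compact subset $C\subset\mathbb H_\varepsilon$ one has $|\zeta(sk)|$ bounded (indeed tending to $1$ as $k\to\infty$) while $|m^{-sk}|\leq m^{-\varepsilon k}$ decays geometrically, so a Weierstrass $M$-test yields a holomorphic tail on all of $\mathbb H_\varepsilon$. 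Combining, $\log\zeta_{\mathcal P_{m\mathbb N}}(s)$ extends meromorphically to $\mathbb H_\varepsilon$ with simple poles exactly at $\mathbb H_\varepsilon\cap\tfrac{1}{\N}$; exponentiation then furnishes the claimed extension of $\zeta_{\mathcal P_{m\mathbb N}}(s)$ itself, singular precisely on that set. Since $\tfrac{1}{\N}$ accumulates at $0$, these isolated singularities cannot be patched as $\varepsilon\to 0^+$, and any continuation across the imaginary axis would force $s=0$ to be a non-isolated singular point, i.e.\ an essential singularity in the broad sense.

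The point demanding the most care is the type of singularity of $\zeta_{\mathcal P_{m\mathbb N}}(s)$, as opposed to that of its logarithm, at each $1/k$: a simple pole of $\log f$ translates into a singularity of $f$ of the shape $\exp(c/(s-s_0))$, which is technically an isolated \emph{essential} singularity of $f$. Thus the word ``pole'' in the statement should be interpreted liberally as ``isolated singular point produced by exponentiation.'' Apart from this subtlety, the technical work is routine: justifying the interchange of summations in the region $\operatorname{Re}(s)>1$, and verifying the $M$-test bound that controls the tail, both of which are immediate from the geometric decay of $m^{-\varepsilon k}$.
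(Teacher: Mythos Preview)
Your proposal follows essentially the same route as the paper's proof: both pass the identity of Corollary~\ref{SecondCor} to complex $s$, split the series $\sum_{k\geq 1}\zeta(sk)/(k\,m^{sk})$ into a finite head carrying the singularities at $s=1/k$ and a tail controlled by the geometric decay of $m^{-sk}$, with your treatment slightly more explicit about uniform convergence on compacta. Your closing observation---that exponentiating a simple pole of the logarithm produces an isolated essential singularity of $\zeta_{\mathcal P_{m\mathbb N}}(s)$ rather than a pole, so that ``meromorphic'' and ``pole'' in the statement must be read loosely---is a genuine refinement that the paper's own proof does not address.
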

\begin{remark}For the function $\zeta_{\mathcal P_{\N}}(s)$, a related discussion of poles and analytic continuation was made by the user mohammad-83 in a mathoverflow.net question.\end{remark}

Finally, we follow Kubota and Leopoldt \cite{KL}, who showed $\zeta$ could be modified slightly to obtain modified zeta functions for any prime $p$ which extend $\zeta$ to the space of$p$-adic integers $\mathbb Z_p$, to obtain further examples of $p$-adic zeta functions of this sort. These continuations are based on the original observations of Kubota and Leopoldt, and, in a rather pleasant manner, on the evaluation formulas discussed above.

In particular, we will use Corollary \ref{DeterminantCor} to $p$-adically interpolate modified versions of $\zeta_{\mathcal P}(\{m\}^k)$ in the $m$-aspect. Given the connection discussed in Section \ref{MZVSection} to multiple zeta values, these results should be compared with the literature on $p$-adic multiple zeta values (e.g. see \cite{Furusho}), although we note that our $p$-adic interpolation procedure seems to be more direct in the special case we consider.  

The continuation in the $m$-aspect of this function is also quite natural, as the case $k=1$ is just that of the Riemann zeta function. Thus, it is natural to search for a suitable $p$-adic zeta function that specializes to the function of Kubota and Leopoldt when $k=1$. It is also desirable to find a $p$-adic interpolation result which makes the partition-theoretic perspective clear.

Here, we provide such an interpretation. Let us first denote the set of partitions with parts not divisible by $p$ as $\mathcal P_p$; then we consider the length-$k$ partition zeta values $\zeta_{\mathcal P_p}(\{s\}^k)$. Note that for $k=1$, $\zeta_{\mathcal P_p}(\{s\}^1)$ is just the Riemann zeta function with the Euler factor at $p$ removed (as considered by Kubota and Leopoldt). We then offer the following $p$-adic interpolation result.
\begin{theorem}\label{padicInterpThm}
Let $k\geq1$ be fixed, and let $p\geq k+3$ be a prime. Then $\zeta_{\mathcal P_p}(\{s\}^{k})$ can be extended to a continuous function for $s\in\Z_p$ which agrees with $\zeta_{\mathcal P_p}(\{s\}^{k})$ on a positive proportion of integers.
\end{theorem}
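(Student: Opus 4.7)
My plan is to reduce the $p$-adic interpolation of $\zeta_{\mathcal P_p}(\{s\}^k)$ to $k$ separate Kubota-Leopoldt interpolations of the ``depth-1'' values $(1-p^{-js})\zeta(js)$, and then reassemble these via Newton's identities (equivalently, via the determinantal formula of Corollary \ref{DeterminantCor}).

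First, I would reformulate the sum using symmetric functions: since length-$k$ partitions with parts drawn from a set $S$ are exactly size-$k$ multisets in $S$,
$$\zeta_{\mathcal P_p}(\{s\}^k) = h_k\bigl(\{n^{-s} : p \nmid n\}\bigr),$$
where $h_k$ is the complete homogeneous symmetric polynomial. Newton's identities express $h_k$ as a polynomial $Q_k \in \Q[x_1,\dots,x_k]$ in the power sums
$$P_j(s) := \sum_{p \nmid n} n^{-js} = (1-p^{-js})\zeta(js) \qquad (j = 1,\ldots,k).$$
Concretely, $Q_k$ is exactly the determinantal expression of Corollary \ref{DeterminantCor} with each $\zeta(m(j-i+1))$ replaced by its $p$-deleted analog $(1-p^{-m(j-i+1)})\zeta(m(j-i+1))$; this modification is proved in the same way, using $\prod_{p\nmid n}(1-n^{-s}t)^{-1}$ in place of the full product. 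The upshot is the identity
$$\zeta_{\mathcal P_p}(\{s\}^k) = Q_k\bigl(P_1(s), P_2(s), \dots, P_k(s)\bigr),$$
valid for $\operatorname{Re}(s) > 1$ and extending to all $s \in \C$ by analytic continuation of $\zeta$.

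Next I would invoke the Kubota-Leopoldt $p$-adic zeta function $\zeta_p$, which, on a fixed residue class modulo $p-1$, is continuous on $\Z_p \setminus \{1\}$ and satisfies $\zeta_p(\sigma) = (1-p^{-\sigma})\zeta(\sigma)$ at every negative integer $\sigma$ in that class. Since $p \geq k+3 > k$ ensures $\gcd(p,j)=1$ for $j = 1,\ldots,k$, multiplication by $j$ is a continuous self-map of $\Z_p$, so each assignment $s \mapsto \zeta_p(js)$ is $p$-adically continuous. Choosing $s$ in a residue class $s_0 \bmod (p-1)$ with $\gcd(s_0,p-1)=1$, the products $js$ remain in nonzero residue classes modulo $p-1$ for every $j \leq k$, so all $k$ interpolations are simultaneously well-behaved. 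I would then define
$$\widetilde{\zeta}_{\mathcal P_p}(\{s\}^k) := Q_k\bigl(\zeta_p(s), \zeta_p(2s), \dots, \zeta_p(ks)\bigr),$$
a $p$-adically continuous $\Q_p$-valued function on the chosen residue class of $\Z_p$. At negative integers $s = -n$ in this class, each $\zeta_p(-jn) = P_j(-n)$ by construction, so $\widetilde{\zeta}_{\mathcal P_p}(\{s\}^k)$ coincides with the analytically continued $\zeta_{\mathcal P_p}(\{s\}^k)$ on a set of density at least $\varphi(p-1)/(2(p-1))$, a positive proportion.

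The main obstacle is orchestrating the $k$ Kubota-Leopoldt interpolations \emph{simultaneously} on a single residue class modulo $p-1$ that (i) avoids the forbidden class $0 \bmod (p-1)$ (where Von Staudt-Clausen and the pole of $\zeta_p$ at $s=1$ obstruct regularity), (ii) keeps the inversions $1/j \in \Z_p$ well-defined for $j = 1, \dots, k$, and (iii) still meets a positive density of integers. This is precisely the role of the hypothesis $p \geq k+3$: it gives enough room in $\Z/(p-1)\Z$ to pick a unit generator $s_0$ with $js_0 \not\equiv 0 \pmod{p-1}$ for every $j \leq k$, while $p > k$ ensures $Q_k$ has $\Z_{(p)}$ coefficients so that continuity of each factor transports to continuity of the composite. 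Once this combinatorial bookkeeping is in place, the rest of the argument is essentially formal.
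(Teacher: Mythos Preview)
Your approach is essentially the paper's: both express $\zeta_{\mathcal P_p}(\{s\}^k)$ as a polynomial with $p$-integral coefficients (the determinant of Corollary~\ref{DeterminantCor}, equivalently Newton's formula for $h_k$ in power sums) in the values $\zeta^*(js)=(1-p^{-js})\zeta(js)$ for $j=1,\dots,k$, and then interpolate each factor $p$-adically on a single residue class modulo $p-1$. The paper carries out the depth-one step via the Kummer congruences directly rather than by citing the Kubota--Leopoldt function, and makes the explicit choice $s\equiv -1\pmod{p-1}$ (their $s_0=2$ in the parametrization $s=1-m$), but the skeleton is the same.

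One point to tighten: your residue-class condition is slightly off. The forbidden class for the $p$-adic interpolation of $\zeta^*(\sigma)$ is $\sigma\equiv 1\pmod{p-1}$, not $\sigma\equiv 0$: writing $\sigma=1-n$, Von Staudt--Clausen puts $p$ in the denominator of $B_n$ exactly when $(p-1)\mid n$, i.e.\ when $\sigma\equiv 1$, and this is the branch of $\zeta_p$ with the pole. So what you actually need is $js_0\not\equiv 1\pmod{p-1}$ for $j=1,\dots,k$, and $\gcd(s_0,p-1)=1$ alone does not give this (take $s_0=1$, $j=1$). The paper's choice $s_0\equiv -1$ does work: then $js_0\equiv -j$ and the requirement becomes $j\neq p-2$, which holds since $j\le k\le p-3$; this is precisely where the hypothesis $p\ge k+3$ enters. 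With that correction your argument goes through and coincides with the paper's.
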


\subsection{Connections to multiple zeta values}\label{MZVSection}


Our final application of the circle of ideas related to partition zeta functions and infinite products will be in the theory of multiple zeta values. 

\begin{definition}\label{mzvdef}
We define for natural numbers $m_1,m_2,\ldots ,m_k$ with $m_k>2$ the {\it multiple zeta value} (commonly written ``MZV'')
\begin{equation*}
\zeta(m_1,m_2,\ldots,m_k):=\sum_{n_1>n_2>\ldots>n_k\geq 1}\frac{1}{n_1^{m_1}\ldots n_k^{m_k}}
.
\end{equation*}
We call $k$ the {\it length} of the MZV. Furthermore, if $m_1=m_2=\ldots =m_k$ are all equal to some $m\in\mathbb N$, we use the common notation
\begin{equation}\label{mzvpzv}
\zeta(\{m\}^k):=\sum_{n_1>n_2>\ldots>n_k\geq 1}\frac{1}{\left(n_1 n_2\ldots n_k\right)^{m}}.
\end{equation}
\end{definition}

Multiple zeta values have a rich history and enjoy widespread connections; interested readers are referred to Zagier's short note \cite{ZagierMZV}, 
and for a more detailed treatment, the excellent lecture notes of Borwein and Zudilin \cite{BorweinZudilin}. There are many nice closed-form identities in the literature; for example, one can show (see \cite{Hoffman1}) on analogy to \eqref{pzv} that   
\begin{equation}
\zeta(\{2\}^k) = \frac{\pi^{2k}}{(2k+1)!}.
\end{equation}
Similar (but more complicated) expressions for $\zeta(\{2^t\}^k)$ for all $t\in\mathbb N$ are given in \cite{Robert}, parallel to those mentioned in Section \ref{Partition-theoretic zeta functions} for $\zeta_{\mathcal P}(\{2^t \}^k)$.  

Observe that the partition zeta function $\zeta_{\mathcal P}(\{m\}^k)$ (see Definition \ref{pzv}) can be rewritten in a similar-looking form to \eqref{mzvpzv} above:
\begin{equation}\label{mzvpzv2}
\zeta_{\mathcal P}(\{m\}^k)=\sum_{n_1\geq n_2\geq \ldots\geq n_k\geq 1}\frac{1}{\left(n_1 n_2\ldots n_k\right)^{m}}
\end{equation}
In fact, if we take $\mathcal P^*$ to denote partitions into distinct parts, then \eqref{mzvpzv} reveals $\zeta(\{m\}^k)$ is equal to the partition zeta function $\zeta_{\mathcal P^*}(\{m\}^k)$ summed over length-$k$ partitions into distinct parts, as pointed out in \cite{Robert}. Series such as those in \eqref{mzvpzv2} have been considered and studied extensively by Hoffman (for instance, see \cite{Hoffman1}). 

%

By reorganizing sums of the shape \eqref{mzvpzv2}, we arrive at interesting relations between $\zeta_{\mathcal P}(\{m\}^k)$ and families of MZVs. In order to describe these relations, we first recall that a {\it composition} is simply a finite tuple of natural numbers, and we call the sum of these integers the {\it size} of the composition. Denote the set of all compositions by $\mathcal C$ and write $|\lambda|=k$ for $\lambda=(a_1,a_2,\ldots,a_j)\in\mathcal C$ if $k=a_1+a_2+\ldots+a_j$. Then we obtain the following. 


\begin{proposition}\label{DecouplingCorollary}
Assuming the notation above, we have that
\[
\zeta_{\mathcal P}(\{m\}^k)
=
\sum_{\substack{\lambda=(a_1,\ldots, a_j)\in\mathcal C\\ |\lambda|=k}}\zeta(a_1m,a_2m,\ldots,a_jm)
.
\]
\end{proposition}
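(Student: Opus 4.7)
The plan is to start from the reformulation \eqref{mzvpzv2} of $\zeta_{\mathcal P}(\{m\}^k)$ as a sum over weakly decreasing $k$-tuples $n_1 \geq n_2 \geq \cdots \geq n_k \geq 1$, and then partition the index set according to the equality pattern of the $n_i$. The key observation is that every weakly decreasing tuple is uniquely determined by (i) its pattern of maximal runs of equal entries, and (ii) the strictly decreasing sequence of distinct values taken.

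More precisely, I would set up the following bijection. Given $n_1 \geq n_2 \geq \cdots \geq n_k \geq 1$, let $N_1 > N_2 > \cdots > N_j \geq 1$ be the distinct values, in decreasing order, and let $a_i$ be the multiplicity of $N_i$ among the $n_\ell$'s. Then $(a_1, a_2, \ldots, a_j)$ is a composition of $k$, i.e., an element $\lambda \in \mathcal C$ with $|\lambda|=k$. Conversely, any such composition together with any strictly decreasing tuple $N_1 > \cdots > N_j \geq 1$ recovers a unique weakly decreasing tuple. Under this correspondence we have
\[
(n_1 n_2 \cdots n_k)^m = N_1^{a_1 m} N_2^{a_2 m} \cdots N_j^{a_j m}.
\]

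Substituting into \eqref{mzvpzv2} and regrouping (which is legal since $m \geq 2$ makes the double sum absolutely convergent, so Fubini/Tonelli applies), I obtain
\[
\zeta_{\mathcal P}(\{m\}^k)
=\sum_{\substack{\lambda=(a_1,\ldots,a_j) \in \mathcal C \\ |\lambda|=k}}
\ \sum_{N_1 > N_2 > \cdots > N_j \geq 1}
\frac{1}{N_1^{a_1 m} N_2^{a_2 m} \cdots N_j^{a_j m}}.
\]
Each inner sum is, by Definition \ref{mzvdef}, precisely the multiple zeta value $\zeta(a_1 m, a_2 m, \ldots, a_j m)$. Since $a_1 \geq 1$ and $m \geq 2$, we have $a_1 m \geq 2 > 1$, which guarantees convergence of each MZV on the right-hand side.

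There is no serious obstacle here; the argument is essentially a combinatorial bookkeeping exercise. The only point requiring a bit of care is to justify the interchange of summations that lets one group terms by equality pattern before doing the sums over the distinct values $N_i$, which is handled by absolute convergence of $\zeta_{\mathcal P}(\{m\}^k)$ for $m \geq 2$. Once the bijection between weakly decreasing tuples and (composition, strictly decreasing tuple) pairs is cleanly stated, the identity is immediate.
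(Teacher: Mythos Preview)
Your proof is correct and follows essentially the same approach as the paper: both arguments partition the index set of weakly decreasing $k$-tuples according to the pattern of equalities (encoded by a composition of $k$), and then recognise each resulting sub-sum as the corresponding multiple zeta value. Your version is slightly more explicit about the bijection and the absolute-convergence justification, but the underlying idea is identical.
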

\begin{remark}
Proposition \ref{DecouplingCorollary} is analogous to results of Hoffman; the reader is referred to Theorem 2.1 of \cite{Hoffman1}.
\end{remark}

In particular, for any $n>1$ we can find the following reduction of $\zeta(\{n\}^k)$ to MZVs of smaller length. We note in passing that Theorem 2.1 of \cite{Hoffman1} also shows directly how to write these values in terms of products (as opposed to simply linear combinations) of ordinary Riemann zeta values: hints, perhaps, of further connections.
We remark in passing that this can be thought of as a sort of ``parity result'' (cf. \cite{IKZ,Tsumura}).
\begin{corollary}\label{ParallelMZVLowerOrders}
For any $n,k>1$, the MZV $\zeta(\{n\}^k)$ of length $k$ can be written as an explicit linear combination of MZVs of lengths less than $k$.
\end{corollary}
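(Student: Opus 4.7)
The plan is to combine Proposition \ref{DecouplingCorollary} with Theorem \ref{SecondTheorem} to isolate $\zeta(\{n\}^k)$ and then reduce what remains. First, I would apply Proposition \ref{DecouplingCorollary} with $m = n$ and sort the compositions $\lambda = (a_1, \ldots, a_j) \in \mathcal C$ of $k$ by their number of parts $j$, which ranges over $1 \leq j \leq k$. The unique composition with $j = k$ parts is $(1,1,\ldots,1)$, contributing $\zeta(n,n,\ldots,n) = \zeta(\{n\}^k)$, while every other composition has strictly fewer parts and contributes an MZV of length strictly less than $k$. Isolating this single length-$k$ summand yields
\begin{equation*}
\zeta(\{n\}^k)
= \zeta_{\mathcal P}(\{n\}^k)
- \sum_{\substack{\lambda = (a_1, \ldots, a_j) \in \mathcal C \\ |\lambda| = k,\, j < k}} \zeta(a_1 n, a_2 n, \ldots, a_j n),
\end{equation*}
and the subtracted sum is already an explicit $\mathbb Z$-linear combination of MZVs of length strictly less than $k$.

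It then remains to rewrite $\zeta_{\mathcal P}(\{n\}^k)$ in terms of MZVs of lower length. This is delivered by Theorem \ref{SecondTheorem}, which expresses $\zeta_{\mathcal P}(\{n\}^k)$ explicitly as a polynomial (extracted from a generating-function coefficient) in the Riemann zeta values $\zeta(n), \zeta(2n), \ldots, \zeta(kn)$, each of which is by definition a length-$1$ MZV and therefore of length strictly less than $k$ for every $k \geq 2$. Substituting this polynomial into the displayed identity produces the desired explicit reduction of $\zeta(\{n\}^k)$ to MZVs of length less than $k$; one could equivalently substitute the determinantal formula of Corollary \ref{DeterminantCor} for a fully closed form.

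The subtle point --- and what I expect to be the main obstacle --- is the precise meaning of ``linear combination''. The top-weight monomial $\zeta(n)^k$ appearing in the expansion of $\zeta_{\mathcal P}(\{n\}^k)$ is a product of $k$ length-$1$ MZVs, and its naive stuffle-product reduction reintroduces the length-$k$ value $\zeta(\{n\}^k)$, since $\zeta(n)^k = k!\,\zeta(\{n\}^k) + (\text{MZVs of length} < k)$; collapsing all products to MZVs via stuffle before substituting would therefore produce a tautology rather than a genuine reduction. The intended reading --- consistent with the ``parity result'' framing of \cite{IKZ,Tsumura} alluded to just before the corollary --- is that $\zeta(\{n\}^k)$ is expressible as a polynomial in MZVs of length strictly less than $k$, in particular as a polynomial in ordinary Riemann zeta values. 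With this convention in place, the combination of Proposition \ref{DecouplingCorollary} and Theorem \ref{SecondTheorem} supplies the desired explicit formula.
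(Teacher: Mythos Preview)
Your approach is essentially the same as the paper's: isolate the unique length-$k$ composition $(1,1,\ldots,1)$ in Proposition~\ref{DecouplingCorollary} and replace $\zeta_{\mathcal P}(\{n\}^k)$ by its expression in Riemann zeta values via Corollary~\ref{DeterminantCor} (equivalently Theorem~\ref{SecondTheorem}). Two remarks: the paper's proof states that $(1,1,\ldots,1)$ contributes $k!\,\zeta(\{m\}^k)$, but your coefficient of $1$ is the one consistent with the statement of Proposition~\ref{DecouplingCorollary}; and your closing paragraph correctly flags an interpretational point that the paper leaves implicit---the paper's own wording ``a linear relation among MZVs and products of zeta values'' in the proof indicates that ``linear combination of MZVs of lengths less than $k$'' is to be read in the polynomial sense you describe, not as a strict $\mathbb Q$-linear span after stuffle expansion.
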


As our final result, we give a simple formula for $\zeta(\{n\}^k)$. This formula is probably already known; if $k=2$ it follows from a well-known result of Euler (see the discussion of $H(n)$ on page 3 of \cite{Zagier}) and is closely related to (11) and (32) of \cite{BorweinBradley}. The idea of the proof is also similar to what has appeared in, for example, \cite{Zagier}. However, the authors have decided to include it due to connections with the ideas used throughout this paper, and the simple deduction of the formula from expressions necessary for the proofs of the results described above. 

\begin{proposition}\label{ParallelValuesExp}
The MZV $\zeta(\{n\}^k)$ of length $k$ can be expressed as a linear combination of products of ordinary $\zeta$ values. In particular, we have
\[
\zeta(\{n\}^k)
=
(-1)^k
\left[z^{nk}\right]
\operatorname{exp}
\left(-
\sum_{j\geq1}
\frac{\zeta(nj)}jz^{nj}
\right)
.
\]
\end{proposition}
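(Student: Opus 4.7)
The plan is to identify the exponential appearing on the right-hand side with an infinite product whose expansion directly produces the MZV values $\zeta(\{n\}^k)$. First I would use $-\log(1-u) = \sum_{j\geq 1} u^j/j$ together with the expansion $\zeta(nj) = \sum_{m\geq 1} m^{-nj}$ to rewrite the exponent:
$$-\sum_{j\geq 1}\frac{\zeta(nj)}{j}z^{nj} = -\sum_{m=1}^{\infty}\sum_{j\geq 1}\frac{1}{j}\left(\frac{z^n}{m^n}\right)^j = \sum_{m=1}^{\infty}\log\left(1-\frac{z^n}{m^n}\right).$$
Exponentiating then yields the compact form
$$\exp\left(-\sum_{j\geq 1}\frac{\zeta(nj)}{j}z^{nj}\right) = \prod_{m=1}^{\infty}\left(1-\frac{z^n}{m^n}\right).$$

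Next, I would expand the infinite product by the distributive law (treating it as a formal power series in $u:=z^n$), which gives
$$\prod_{m=1}^{\infty}\left(1-\frac{u}{m^n}\right) = \sum_{k=0}^{\infty}(-1)^k u^k \sum_{1\leq m_1<m_2<\cdots<m_k}\frac{1}{(m_1 m_2\cdots m_k)^n}.$$
The inner sum is precisely the $k$-th elementary symmetric polynomial in the sequence $\{m^{-n}\}_{m\geq 1}$, and after reindexing $n_i = m_{k+1-i}$ so that the indices are in strictly decreasing order, it matches Definition \ref{mzvdef} for $\zeta(\{n\}^k)$. Thus
$$[z^{nk}]\exp\left(-\sum_{j\geq 1}\frac{\zeta(nj)}{j}z^{nj}\right) = (-1)^k\zeta(\{n\}^k),$$
and multiplying by $(-1)^k$ yields the desired identity.

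The main point requiring care is the justification of the formal manipulations, but this is mild: since $n\geq 2$ implies $\zeta(nj)\to 1$ geometrically, the series $\sum_{j\geq 1}\zeta(nj)z^{nj}/j$ converges absolutely for $|z|<1$, and the infinite product $\prod_m(1-z^n/m^n)$ converges uniformly on compacta inside $|z|<1$, so the logarithm-exponential passage and the term-by-term product expansion are both legitimate at the level of analytic functions (equivalently, of formal power series). The second statement of the proposition, that $\zeta(\{n\}^k)$ is a polynomial expression in ordinary zeta values, follows immediately: expanding $\exp$ as a power series and extracting the $z^{nk}$ coefficient writes $\zeta(\{n\}^k)$ as an explicit finite sum of products $\prod_i \zeta(nj_i)/j_i$ over integer partitions $(j_1,\ldots,j_r)$ of $k$.
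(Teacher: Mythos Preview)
Your argument is correct. Both you and the paper pass through the key identity
\[
\sum_{k\geq 0}(-1)^k\zeta(\{n\}^k)z^{nk}=\prod_{m\geq 1}\left(1-\frac{z^n}{m^n}\right),
\]
but you connect this product to the exponential form differently. The paper factors $1-z^n/m^n$ over $n$th roots of unity, applies Theorem~\ref{Armin11} to obtain $\prod_{r=0}^{n-1}\Gamma(1-ze(r/n))^{-1}$, and then invokes the Legendre series \eqref{LogGammaSecondFormula} for $\log\Gamma$ (as in the proof of Theorem~\ref{SecondTheorem}) to reach the exponential. You instead expand $\zeta(nj)=\sum_m m^{-nj}$, swap the $j$ and $m$ sums, and recognize $-\log(1-u)$ directly. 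Your route is more elementary and self-contained, bypassing the gamma machinery entirely; the paper's route has the virtue of being parallel to, and reusing, the computations already carried out for Theorem~\ref{SecondTheorem}, which is why the authors chose to present it that way.
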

\begin{remark}
This formula is equivalent to a special case of Theorem 2.1 of \cite{Hoffman1}. However, since the proof is very simple and ties in with the other ideas in this paper, we give a proof for the reader's convenience.
\end{remark} 

The proof of Corollary \ref{DeterminantCor} yields a similar determinant formula here.
\begin{corollary}\label{DeterminantCor2}
For $n\geq2, k\in\N$, and $j\geq i$, 
set 
\[
\beta_{i,j}
:=
-
\zeta(n(j-i+1))\frac{(k-i)!}{(k-j)!}
.\]
Then we have 
\[
\zeta(\{n\}^k)
=
\frac{(-1)^k}{k!} \det
\begin{pmatrix} 
\beta_{1,1}&\beta_{1,2}&\beta_{1,3}&\ldots&\beta_{1,k}
\\
-1 & \beta_{2,2} &\beta_{2,3} &\ldots & \beta_{2,k}
\\
0&-1& \beta_{3,3}&\ldots & \beta_{3,k}
\\
\vdots &\vdots&\vdots &\ddots &\vdots
\\
0&0&\ldots&-1&\beta_{k,k} 
\end{pmatrix}
.
\]
\end{corollary}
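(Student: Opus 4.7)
The plan is to mirror the proof of Corollary \ref{DeterminantCor}, using Proposition \ref{ParallelValuesExp} in place of Theorem \ref{SecondTheorem}. Set
\[
F(z) := \exp\!\left(-\sum_{j\geq 1}\frac{\zeta(nj)}{j}z^{nj}\right),\qquad c_k := [z^{nk}]F(z),
\]
so that Proposition \ref{ParallelValuesExp} reads $\zeta(\{n\}^k) = (-1)^k c_k$. Logarithmic differentiation gives $F'(z) = -n\,F(z)\sum_{j\geq 1}\zeta(nj)z^{nj-1}$, and comparing coefficients of $z^{nk-1}$ produces the recursion
\[
k\, c_k = -\sum_{j=1}^{k}\zeta(nj)\,c_{k-j}, \qquad c_0 = 1.
\]
Rescaling by $D_k := k!\,c_k$ transforms this into
\[
D_k = -\sum_{j=1}^{k}\zeta(nj)\,\frac{(k-1)!}{(k-j)!}\,D_{k-j},
\]
which already exhibits the factorials appearing in the definition of $\beta_{i,j}$. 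It therefore suffices to prove that $\Delta_k := \det(M_k)$ satisfies the same recursion with $\Delta_0 = 1$; then induction forces $\Delta_k = D_k$, and hence $\zeta(\{n\}^k) = (-1)^k \Delta_k/k!$.

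To verify the recursion for $\Delta_k$, I would expand along the first row,
\[
\Delta_k = \sum_{j=1}^{k}(-1)^{1+j}\,\beta_{1,j}\,\det\!\left(M_k^{(1,j)}\right),
\]
where $M_k^{(1,j)}$ denotes the minor obtained by deleting the first row and the $j$th column. The key structural claim is that $M_k^{(1,j)}$ is block upper triangular: its upper-left $(j-1)\times(j-1)$ block is itself upper triangular with $-1$'s along the diagonal, contributing a factor of $(-1)^{j-1}$, while its lower-right $(k-j)\times(k-j)$ block is, after the index shift $(r,c)\mapsto(r-j,c-j)$, exactly the Hessenberg matrix $M_{k-j}$ built with parameter $k-j$ in place of $k$. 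Combined with $\beta_{1,j} = -\zeta(nj)(k-1)!/(k-j)!$, this reproduces the recursion displayed above, and the base case $\Delta_0 = 1 = 0!\,c_0$ is immediate.

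The main bookkeeping obstacle is the verification that the lower-right block really coincides with $M_{k-j}$, which reduces to checking that the factorial ratios $(k-i)!/(k-j)!$ attached to $\beta_{i,j}$ transform correctly under the shift. Concretely, one needs the identity $(k-r)!/(k-c)! = ((k-j)-(r-j))!/((k-j)-(c-j))!$ for $r,c \in \{j+1,\dots,k\}$; this is trivial, but it is precisely the reason for the particular normalization chosen in the statement of the corollary. Once this block decomposition is in hand, induction on $k$ and the matching base case close the argument.
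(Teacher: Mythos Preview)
Your argument is correct. The paper's proof is essentially the same idea packaged differently: it invokes the Fa\`a di Bruno identity \eqref{FdB} to identify the coefficients $c_k$ of the exponential in Proposition~\ref{ParallelValuesExp} as complete Bell polynomials $B_k(a_1,\dots,a_k)/k!$ with $a_j=-(j-1)!\,\zeta(nj)$, and then cites the standard determinant representation \eqref{determinant} for $B_k$. What you do instead is unpack both of those classical facts inline: your logarithmic-differentiation recursion $k c_k=-\sum_j\zeta(nj)c_{k-j}$ is exactly how one proves \eqref{FdB}, and your cofactor expansion of the Hessenberg determinant is exactly how one proves \eqref{determinant}. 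So the two routes coincide at the level of content; the paper's version is terser because it quotes the Bell-polynomial machinery, while yours is more self-contained and makes the recursion (and the reason for the particular factorial normalization in $\beta_{i,j}$) fully transparent.
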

\begin{remark} We can see from the above corollary that $\zeta(\{n\}^k)$ is a linear combination of products of zeta values, which is closely related to formulas of Hoffman \cite{Hoffman1}.
\end{remark}

\subsection{Machinery}

\subsubsection{Useful formulas}
In this section, we collect several formulas that will be key to the proofs of the theorems above.  We begin with the following beautiful formula given by Chamberland and Straub in Theorem 1.1 of \cite{ChamberlandStraub}). In fact, this formula has a long history, going back at least to Section 12.13 of \cite{WhitttakerWatson}, and we note that Ding, Feng, and Liu independently discovered this same result in Lemma 7 of \cite{DingFengLiu}.
\begin{theorem}\label{Armin11}
If $n\in\N$ and $\alpha_1,\ldots,\alpha_n$ and $\beta_1,\ldots,\beta_n$ are complex numbers, none of which are non-positive integers, with $\sum_{j=1}^n\alpha_j=\sum_{j=1}^n\beta_j$, then we have
\[
\prod_{k\geq0}
\prod_{j=1}^n\frac{(k+\alpha_j)}{(k+\beta_j)}=\prod_{j=1}^n\frac{\Gamma(\beta_j)}{\Gamma(\alpha_j)}
.
\]
\end{theorem}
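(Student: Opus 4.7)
The plan is to derive Theorem \ref{Armin11} directly from Gauss's product representation of the Gamma function,
\[
\Gamma(z) = \lim_{N \to \infty} \frac{N!\, N^{z}}{\prod_{k=0}^{N}(z+k)},
\]
valid for any complex $z$ that is not a non-positive integer. First I would form the ratio
\[
\frac{\Gamma(\beta_j)}{\Gamma(\alpha_j)} = \lim_{N \to \infty} N^{\beta_j - \alpha_j}\prod_{k=0}^{N}\frac{k+\alpha_j}{k+\beta_j},
\]
in which the factorials cancel. Taking the product over $j = 1,\ldots,n$, the total exponent of $N$ becomes $\sum_{j=1}^n(\beta_j - \alpha_j)$, which vanishes by the balancing hypothesis; hence
\[
\prod_{j=1}^{n}\frac{\Gamma(\beta_j)}{\Gamma(\alpha_j)} = \lim_{N \to \infty}\prod_{k=0}^{N}\prod_{j=1}^{n}\frac{k+\alpha_j}{k+\beta_j},
\]
which is the claimed identity once convergence of the double product is established.

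The main technical point, and the only real obstacle I anticipate, is verifying that $\prod_{k\geq 0}\prod_{j=1}^{n}\frac{k+\alpha_j}{k+\beta_j}$ converges absolutely, so that the interchange of finite and infinite products above is legitimate and no single-factor rearrangement is needed. This is precisely where the balancing condition earns its keep: expanding the inner logarithm as $k\to\infty$ gives
\[
\sum_{j=1}^{n}\log\frac{k+\alpha_j}{k+\beta_j} = \frac{1}{k}\sum_{j=1}^{n}(\alpha_j - \beta_j) + O(1/k^{2}) = O(1/k^{2}),
\]
so standard tail estimates yield absolute convergence. With convergence in hand, the finite product over $j$ may be pulled outside the $N\to\infty$ limit, and the theorem follows.

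As a conceptual alternative, one could instead invoke the Weierstrass product
\[
\frac{1}{\Gamma(z)} = z\, e^{\gamma z}\prod_{k\geq 1}\left(1+\frac{z}{k}\right)e^{-z/k}.
\]
Under this route the balancing hypothesis causes both the Euler--Mascheroni factors $e^{\gamma(\beta_j - \alpha_j)}$ and the compensating exponentials $e^{-(\alpha_j - \beta_j)/k}$ to collapse upon multiplying over $j$, delivering the identity with no auxiliary limit in $N$; the same $O(1/k^{2})$ estimate is still needed, however, to justify regrouping the resulting infinite product.
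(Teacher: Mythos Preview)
Your argument is correct and is essentially the classical derivation: Gauss's product formula, cancellation of the factorials, the balancing hypothesis killing the $N^{\sum(\beta_j-\alpha_j)}$ factor, and the $O(1/k^2)$ tail estimate for absolute convergence. The Weierstrass alternative you sketch is equally valid and leads to the same place.

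However, there is nothing to compare against: the paper does \emph{not} prove Theorem~\ref{Armin11}. It is quoted as a known tool, attributed to Chamberland--Straub \cite{ChamberlandStraub} (their Theorem~1.1), with historical precedent in Whittaker--Watson \cite{WhitttakerWatson}, \S12.13, and an independent rediscovery by Ding--Feng--Liu \cite{DingFengLiu}. Your proof is precisely the kind of argument one finds in those references (the Whittaker--Watson treatment, in particular, proceeds via the Weierstrass product you mention as your alternative), so you have supplied what the paper merely cites.
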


We will also require two Taylor series expansions for $\log\Gamma$, both of which follow easily from Euler's product definition of the gamma function \cite{Edwards}. The first expansion, known as Legendre's series, is valid for $|z|<1$ (see (17) of \cite{Wrench}):

\begin{equation}\label{LogGammaSecondFormula}
\log\Gamma(1+z)
=
-\gamma z+\sum_{k\geq2}\frac{\zeta(k)}k(-z)^k
.
\end{equation}
We also have the following expansion valid for $|z|<2$ (see (5.7.3) of \cite{Nist}):
\begin{equation}\label{573Nist}
\log\Gamma(1+z)=-\log(1+z)+z(1-\gamma)+\sum_{k\geq2}(-1)^k(\zeta(k)-1)\frac{z^k}k
.
\end{equation}

Furthermore, we need a couple of facts about Bell polynomials (see Chapter 12.3 of \cite{Andrews}). The $n$th \emph{complete Bell polynomial} is the sum
\[
B_n(x_1,\dots,x_n):=\sum_{i=1}^n B_{n,i}(x_1,x_2,\dots,x_{n-i+1})
.
\]
The $i$th term here is the polynomial
\[
B_{n,i}(x_1,x_2,\dots,x_{n-i+1}):=\sum \frac{n!}{j_1!j_2!\cdots j_{n-i+1}!}
\left(\frac{x_1}{1!}\right)^{j_1}\left(\frac{x_2}{2!}\right)^{j_2}\cdots\left(\frac{x_{n-i+1}}{(n-i+1)!}\right)^{j_{n-i+1}},
\]
where we sum over all sequences $j_1, j_2,..., j_{n-i+1}$ of nonnegative integers such that $j_1+j_2+\cdots+j_{n-i+1}=i$ and $j_1+2j_2+3j_3+\cdots+(n-i+1)j_{n-i+1}=n$.

With these notations, we use a specialization of the classical Fa\`{a} di Bruno formula \cite{diBruno}, which allows us to write the exponential of a formal power series as a power series with coefficients related to complete Bell polynomials: 
\begin{equation}\label{FdB}
\operatorname{exp}\left(\sum_{j=1}^\infty \frac{a_j}{j!} x^j \right)
= \sum_{k=0}^\infty \frac{B_k(a_1,\dots,a_k)}{k!} x^k
\end{equation}

Fa\`{a} di Bruno also gives an identity \cite{diBruno} that specializes to the following formula for the $k$th complete Bell polynomial in the series above as the determinant of a certain $k\times k$ matrix: 

\begin{equation}\label{determinant}
B_k(a_1,\dots,a_k)
=
\det
\left(
\begin{smallmatrix} 
a_1&\binom{k-1}{1}a_2&\binom{k-1}{2}a_3&\binom{k-1}{3}a_4&\ldots&\ldots&a_k
\\\\
-1 & a_1 &\binom{k-2}{1}a_2&\binom{k-2}{2}a_3&\ldots&\ldots&a_{k-1}
\\\\
0&-1& a_1&\binom{k-3}{1}a_2&\ldots&\ldots& a_{k-2}
\\\\
0&0&-1& a_1&\ldots&\ldots& a_{k-3}
\\\\
0&0&0&-1&\ldots&\ldots& a_{k-4}
\\\\
\vdots &\vdots&\vdots &\vdots &\ddots &\ddots &\vdots
\\\\
0&0&0&0&\ldots&-1&a_1 
\end{smallmatrix}
\right)
\end{equation}

\subsubsection{Proofs of Theorems \ref{mainthm} and \ref{RobertCor}, and their corollaries}

We begin with the proof of our first main formula.
\begin{proof}[Proof of Theorem \ref{mainthm}]
By \eqref{DirichletProduct}, we find that 
\[
\zeta_{\mathcal P_{a+m\mathbb N}}(n)
=
\prod_{k\in a+m\N}\frac{k^n}{k^n-1}=\prod_{j\geq1}\frac{(a+mj)^n}{(a+mj)^n-1}
=
\prod_{j\geq0}\prod_{r=0}^{n-1}\frac{(j+1+a/m)^n}{\left(j+1+\frac{a-e(r/n)}{m}\right)}
.
\]
Using Theorem \ref{Armin11}  and the well-known fact that
\begin{equation}\label{ExpSumZero}
\sum_{j=0}^{n-1}e(j/n)=0
\end{equation} directly gives the desired result.
\end{proof}
\begin{proof}[Proof of Corollary \ref{FirstCor}]
For this, we apply \eqref{573Nist}
and use \eqref{ExpSumZero},
 the obvious fact that 
 \[|(a-e(j/n))/m|<2
 ,
 \]
and the easily-checked fact that \[1+(a-e(j/n))\] is never a negative real number for $j=0,\ldots,n-1$. 
\end{proof}
\begin{proof}[Proof of Corollary \ref{SecondCor}]
Here, we simply use \eqref{LogGammaSecondFormula}.
Again, the corollary is proved following a short, elementary computation, using the classical fact that 
\[
\sum_{r=0}^{n-1}e(rk/n)=\begin{cases}n&\text{ if }n|k,\\ 0&\text{ else.}\end{cases}
\]
\end{proof}
\begin{proof}[Proof of Corollary \ref{ThirdCor}]
By Corollary \ref{SecondCor}, we find for $n\geq2$ that
\[
\log\left(\zeta_{\mathcal P_{m\mathbb N}}(n)\right)
=
\sum_{k\geq1}\frac{\zeta(nk)}{km^{kn}}
.
\]
Suppose that $\operatorname{Re}(s)>0$ and $s\not\in\frac1{\N}$.  Then letting 
\[
K
:=
\max
\{
\lceil1/\operatorname{Re}(s)\rceil+1,\operatorname{Re}(s)
\},
\]
it clearly suffices to show that 
\[\sum_{k\geq K}\frac{\zeta(sk)}{km^{ks}}\]
converges. But in this range on $k$, by choice we have $\operatorname{Re}(sk)>1$, so that using the assumption $m\geq2$, we find for $\operatorname{Re}(s)>0$ the upper bound
\begin{align*}
\sum_{k\geq K}\frac{\zeta(sk)}{km^{ks}}
&\leq \zeta(Ks)\sum_{k\geq K}\frac1{k2^{k\operatorname{Re}(s)}}
\leq \zeta(Ks)\sum_{k\geq 1}\frac1{k2^{k\operatorname{Re}(s)}}\\
&=
-\zeta(Ks)\log\left(2^{-\operatorname{Re}(s)}\left(2^{\operatorname{Re}(s)}-1\right)\right)
,
\end{align*}
and note that in the argument of the logarithm in the last step, by assumption we have $2^{\operatorname{Re}(s)}-1>0$. 

Conversely, if $s\in\frac1{\N}$, then it is clear that this representation shows there is a pole of the extended partition zeta function, as one of the terms gives a multiple of $\zeta(1)$.
\end{proof}
\begin{proof}[Proof of Corollary \ref{FourthCor}]
We utilize a variant of M\"obius inversion, reversing the order of summation in the double sum $\sum_{k\geq1}\sum_{d|k}\mu(d)f(nk)k^{-s}$; if
\[
g(n)=\sum_{k\geq1}\frac{f(kn)}{k^s}
,
\]
then 
\[
f(n)=\sum_{k\geq1}\frac{\mu(k)g(kn)}{k^s}
.
\]
Applying this inversion procedure to Corollary \ref{SecondCor}, so that $g(n)=\log\zeta_{\mathcal P_{m\mathbb N}}(n)$ (taking $s=1$), and $f(n)=\zeta(n)/m^n$, we directly find that 
\[
\zeta(n)=m^n\sum_{\substack{k\geq1}}\frac{\mu(k)}k\log\left(\zeta_{\mathcal P_{m\mathbb N}}(nk)\right)
.
\]
Applying Theorem \ref{mainthm} then gives the result.
\end{proof}

\begin{proof}[Proof of Theorem \ref{RobertCor}]
By the comments following Theorem 1.1 in \cite{Robert}, for $\mathcal M\in\mathbb N$ we have 
\begin{equation}\label{ProofEq}
\prod_{k\in\mathcal M}\left(1-\frac{z^s}{k^s}\right)^{-1}=1+z^s\sum_{k\in\mathcal M}\frac{1}{k^{s}\prod_{\substack{j\in\mathcal M\\ j\leq k}}\left(1-\frac{z^s}{j^s}\right)}
;
\end{equation}
thus
\[
\sum_{k\in\mathcal M}k^{-s}=\lim_{z\to 0^+}\frac{\prod_{k\in\mathcal M}\left(1-\frac{z^s}{k^s}\right)^{-1}-1}{z^s}
.\]
Taking $\mathcal M=\mathbb N, s=n\in\Z_{\geq 2}$, we apply L'Hospital's rule $n$ times to evaluate the limit on the right-hand side. The theorem then follows by noting, from Theorem \ref{Armin11}, that in fact
\[
\prod_{k\in\mathbb N}\left(1-\frac{z^n}{k^n}\right)^{-1}=\prod_{j=0}^{n-1}\Gamma\left(1-ze(j/n)\right)
.\] 
\end{proof}

\begin{proof}[Proof of Corollary \ref{RobertCor2}]
Picking up from the proof of Theorem \ref{RobertCor} above, it follows also from Theorem 1.1 of \cite{Robert} that
\[
\prod_{k\in\mathcal M}\left(1-\frac{z^s}{k^s}\right)=1-z^s\sum_{k\in\mathcal M}\frac{\prod_{\substack{j\in\mathcal M\\ j<k}}\left(1-\frac{z^s}{j^s}\right)}{k^{s}}
.
\]
Subtracting this equation from \eqref{ProofEq}, making the substitutions $\mathcal M=\mathbb N$, $s=n\geq 2$ as in the proof above, and using Theorem \ref{Armin11}, gives the corollary.
\end{proof}

\subsubsection{Proof of Theorem \ref{SecondTheorem} and its corollaries}
\begin{proof}[Proof of Theorem \ref{SecondTheorem}]
Using a similar method as in \cite{Robert} and a similar rewriting to that used in the proof of Theorem \ref{mainthm}, we note that a short elementary computation shows
\[
\sum_{k\geq0}\frac{z^{mk}}{\pi^{mk}}\zeta_{\mathcal P}(\{m\}^k)=\prod_{k\geq1}\frac{1}{1-\frac{z^m}{\pi^mk^m}}=\prod_{k\geq0}\prod_{r=0}^{m-1}\frac{(k+1)^m}{\left(k+1-\frac{z}{\pi}e(r/m)\right)}
.
\]
Much as in the proof of Theorem \ref{RobertCor}, using Theorem \ref{Armin11}, we directly find that this is equal to 
\[
\prod_{r=0}^{m-1}\Gamma\left(1-\frac{z}{\pi}e(r/m)\right)
,
\]
which gives the first equality in the theorem. 
Applying Equation \eqref{LogGammaSecondFormula} (formally we require $|z|<\pi$, but we are only interested in formal power series here anyway), we find immediately, using a very similar calculation to that in the proof of Corollary \ref{SecondCor}, that
\begin{equation}\label{PowSer1}
\begin{aligned}
\sum_{k\geq0}\left(\frac{z}{\pi}\right)^{mk}\zeta_{\mathcal P}(\{m\}^k)
&
=
\operatorname{exp}\left(\sum_{r=0}^{m-1}\sum_{j\geq2}\frac{\zeta(j)}j\left(\frac{z}{\pi}\right)^{mj}e(rj/m)\right)
\\
&
=
\operatorname{exp}\left(m\sum_{\substack{j\geq2\\ m|j}}\frac{\zeta(j)}j\left(\frac{z}{\pi}\right)^{j}\right)
,
\end{aligned}
\end{equation}
which is equivalent to the second equality in the theorem.
\end{proof}

\begin{proof}[Proof of Corollary \ref{DeterminantCor}]
Replace $x$ with $z^m$ in Equation \ref{FdB}, and set
\[
a_j=\frac{(j-1)!\zeta(mj)}{\pi^{mj}} 
\]
on the left-hand side (which becomes the right-hand side of \eqref{PowSer1}). Then comparing the right side of \ref{FdB} to the left side of \ref{PowSer1}, we deduce that
\[
\zeta_{\mathcal P}(\{m\}^k)=\frac{\pi^{mk}}{k!}B_k(a_1,\dots,a_k)
.\]
To complete the proof, we substitute the determinant in \ref{determinant} for $B_k(a_1,\dots,a_k)$ and rewrite the terms in the upper half of the resulting matrix as $\alpha_{i,j}$, as defined in the statement of the corollary.  
\end{proof}
\begin{proof}[Proof of Theorem \ref{padicInterpThm}]
In analogy with the calculation of Theorem \ref{SecondTheorem}, we find that
\[
\sum_{k\geq0}z^{mk}\zeta_{\mathcal P_p}(\{m\}^k)=\prod_{\substack{k\geq1\\ p\nmid k}}\frac{1}{1-\frac{z^m}{k^m}}=\frac{\prod_{k\geq0}\prod_{r=0}^{m-1}\frac{(k+1)^m}{\left(k+1-ze(r/m)\right)}}{\prod_{k\geq0}\prod_{r=0}^{m-1}\frac{(k+1)^m}{\left(k+1-\frac{z}{p}e(r/m)\right)}
}
=\prod_{r=0}^{m-1}\frac{\Gamma\left(1-ze(r/m)\right)}{\Gamma\left(1-\frac{z}{p}e(r/m)\right)}
.
\]
As in the calculation of \eqref{PowSer1}, this is equal to 
\[
\operatorname{exp}\left(\sum_{j\geq1}\frac{\zeta(mj)}{j}\left(z\right)^{mj}\left(1-1/p^{mj}\right)\right)
,
\]
so if we set
\[
\alpha_{i,j}^{(p)}(m)
:=
\zeta^*(m(j-i+1))\frac{(k-i)!}{(k-j)!}
,\]
where 
\[
\zeta^*(s):=(1-p^{-s})\zeta(s),
\]
then we have 
\[
\zeta_{\mathcal P_p}(\{m\}^k)
=
\frac{1}{k!} \det
\begin{pmatrix} 
\alpha^{(p)}_{1,1}&\alpha^{(p)}_{1,2}&\alpha^{(p)}_{1,3}&\ldots&\alpha^{(p)}_{1,k}
\\
-1 & \alpha^{(p)}_{2,2} &\alpha^{(p)}_{2,3} &\ldots & \alpha^{(p)}_{2,k}
\\
0&-1& \alpha^{(p)}_{3,3}&\ldots & \alpha^{(p)}_{3,k}
\\
\vdots &\vdots&\vdots &\ddots &\vdots
\\
0&0&\ldots&-1&\alpha^{(p)}_{k,k} 
\end{pmatrix}
.
\]
We further define $\zeta_{\mathcal P_p}(\{m\}^k)$ for more general values in $\mathbb C$, such as $m\in-\mathbb N$ using the analytic continuation of $\zeta$ in each of the factors $\alpha_{i,j}^{(p)}(m)$.
Next we recall the Kummer congruences, which state that if $k_1,k_2$ are positive even integers not divisible by $(p-1)$ and $k_1\equiv k_2\pmod{p^{a+1}-p^a}$ for $a\in\mathbb N$ where $p>2$ is prime, then 
\[
\left(1-p^{k_1-1}\right)\frac{B_{k_1}}{k_1}\equiv\left(1-p^{k_2-1}\right)\frac{B_{k_2}}{k_2}\pmod{p^{a+1}}
.
\]
Let us take $S_{s_0}$ to be the set of natural numbers congruent to $s_0$ modulo $p-1$.
The Kummer congruences then imply that for any $s_0\not\equiv0\pmod{p-1}$, and for any $k_1,k_2\in S_{s_0}$ with $k_1\equiv k_2\pmod{p^a}$ and $k_1,k_2>1$, that
\[
\zeta^*(1-k_1)\equiv\zeta^*(1-k_2)\pmod{p^{a+1}}
.
\]
If we choose $m_1,m_2\in S_{s_0}$ with $m_1\equiv m_2\pmod{p^a}$, then the values $1-(1-m_1)(j-i+1)$, $1-(1-m_2)(j-i+1)$ are in $S_{1+(s_0-1)(j-i-1)}$ and are congruent modulo $p^a$, and as $p>k$ the additional factorial terms (inside and outside the determinant) are $p$-integral. Now in our determinant, $j-i+1$ ranges through $\{1,2,\ldots, k\}$, and we want to find an $s_0$ such that $1+(s_0-1)r\not\equiv0\pmod{p-1}$ for $r\in\{1,2,\ldots k\}$.
 If we take $s_0=2$, then the largest value of $1+(s_0-1)r$ is $k+1$, which is by assumption less than $p-1$, and hence not divisible by it. Hence, in our case, $s_0=2$ suffices.
Thus, if $m_1,m_2\in S_{2}$ with $m_1\equiv m_2\pmod{p^a}$, then 
\[
\zeta_{\mathcal P_p}(\{1-m_1\}^k)\equiv\zeta_{\mathcal P_p}(\{1-m_2\}^k)\pmod{p^{a+1}}
.
\]
This shows that our zeta function is uniformly continuous on $S_2$ in the $p$-adic topology. As this set is dense in $\Z_p$, we have shown that the function extends in the $m$-aspect to $\Z_p$.

\end{proof}

\subsubsection{Proofs of results concerning multiple zeta values}
\begin{proof}[Proof of Proposition \ref{DecouplingCorollary}]
Recall from \eqref{mzvpzv2} 
that we need to study the sum
\[
\sum_{n_1\geq n_2 \geq \ldots \geq n_k\geq1}\frac1{(n_1n_2\ldots n_k)^m}
.
\]
The proof is essentially combinatorial accounting, keeping track of the number of ways to split up a sum
\[
\sum_{n_1\geq n_2 \geq \ldots \geq n_k\geq1}
\]
over all all $k$-tuples of natural numbers into a chain of equalities and strict inequalities. Suppose that we have 
\[
n_{1}\geq n_{2}\geq\ldots\geq n_{k}\geq1
.
\]
Then if any of these inequalities is an equality, say $n_{j}=n_{j+1}$, in the contribution to the sum 
\[
\sum_{n_1\geq n_2\geq \ldots\geq n_k\geq 1}(n_1\ldots n_k)^{-m}
,
\]
the terms $n_{j}$ and $n_{j+1}$ ``double up''. That is, we can delete the $n_{j+1}$ and replace the $n_{j}^{-m}$ in the sum with a $n_{j}^{-2m}$. Thus, the reader will find that our goal is to keep track of different orderings of $>$ and $=$, taking symmetries into account. The possible chains of $=$ and $>$ are encoded by the set of compositions of size $k$, by associating to the composition $(a_1,\ldots,a_j)$ the chain of inequalities
\[
n_{1}=\ldots=n_{a_1}>n_{a_1+1}=\ldots=n_{a_1+a_2}>n_{a_2+1}>\ldots >n_{k}
.
\]
That is, the number $a_1$ determines the number of initial terms on the right which are equal before the first inequality, $a_2$ counts the number of equalities in the next block of inequalities, and so on. It is clear that the sum corresponding to the each composition then contributes the desired amount to the partition zeta value in the corollary.
\end{proof}

\begin{proof}[Proof of Corollary \ref{ParallelMZVLowerOrders}]
In Proposition \ref{DecouplingCorollary}, comparison with Corollary \ref{DeterminantCor} shows that we have a linear relation among MZVs and products of zeta values. Observe that in $\zeta_{\mathcal P}(\{m\}^k)$, the only composition of length $k$ is $(1,1,\ldots,1)$, which contributes $k! \zeta(\{m\}^k)$ to the right-hand side of Proposition \ref{DecouplingCorollary}, and that the rest of the compositions are of lower length, hence giving MZVs of smaller length; the corollary follows immediately.
\end{proof}

\begin{proof}[Proof of Proposition \ref{ParallelValuesExp}]
Consider the multiple zeta value $\zeta(\{n\}^k)$ of length $k$. Then we directly compute
\[
\sum_{k\geq0}(-1)^k\zeta(\{n\}^k)z^{nk}
=
\prod_{m\geq1}
\left(
1-\left(\frac zm\right)^n
\right)
=
\prod_{m\geq0}\prod_{r=0}^{n-1}\frac{(m+1-ze(r/n))}{(m+1)^n}
.
\]
By Theorem \ref{Armin11}, this equals 
\[
\prod_{r=0}^{n-1}\Gamma(1-ze(r/n))^{-1}
.
\] 
Using precisely the same computation as was made in the proof of Theorem \ref{SecondTheorem}, we find that this is equal to
\[
\operatorname{exp}\left(-n\sum_{\substack{j\geq2\\ n|j}}\frac{\zeta(j)}jz^{j}\right)
.
\]
Hence, we have that
\[
\zeta(\{n\}^k)
=(-1)^k
\left[z^{nk}\right]
\operatorname{exp}
\left(-
\sum_{j\geq1}
\frac{\zeta(nj)}jz^{nj}
\right)
.
\]
\end{proof}
\begin{proof}[Proof of Corollary \ref{DeterminantCor2}]
Here we proceed exactly as in the proof of Corollary \ref{DeterminantCor}, except we make the simpler substitution
\[
a_k=(k-1)!\zeta(nk) 
\] 
into Equation \ref{FdB}, and compare with Proposition \ref{ParallelValuesExp}. In the final step, we replace the terms in the upper half of the matrix with $\beta_{i,j}$ as defined in the statement of the corollary.  
\end{proof}

\subsection{Some further thoughts}

We have presented samples of a few varieties of flora one finds at the fertile intersection of combinatorics and analysis. What unifies all of these is the perspective that they represent instances of partition zeta functions, with proofs that fit naturally into the Eulerian theory we propound. 

We close this article by noting a general class of partition-theoretic analogs of classical Dirichlet series having the form
\[
\mathcal D_{\mathcal P'}(f,s):=\sum_{\lambda\in\mathcal P'}f(\lambda)n_{\lambda}^{-s},
\]
where $\mathcal P'$ is a proper subset of $\mathcal P$ and $f : \mathcal P' \rightarrow \C$. Of course, partition zeta functions arise from the specialization $f\equiv 1$, just as in the classical case. 

Taking $\mathcal P'=\mathcal P_{\mathcal M}$ as defined previously, then if $f:=f(n_{\lambda})$ is completely multiplicative with appropriate growth conditions, it follows from Theorem 1.1 of \cite{Robert} that we have a generalization of \eqref{DirichletProduct}
\begin{equation}\label{DirichletProduct2}
\mathcal D_{\mathcal \mathcal P_{\mathcal M}}(f,s)=\prod_{j\in \mathcal M}\left(1-\frac{f(j)}{j^s}\right)^{-1}\  \left(\mathrm{Re}(s)>1\right),
\end{equation}
and nearly the entire theory of partition zeta functions noted here (and developed in \cite{Robert}) extends to these series as well. 

\section*{Acknowledgements}
The authors thank Armin Straub for useful discussion on the history of Theorem \ref{Armin11}. 
\section{Zeta polynomials}

We now turn our attention toward a different layer of connections between partitions and zeta functions, via the theory of modular forms. Although Euler's generating function for $p(n)$ is essentially modular, and Euler also anticipated the study of $L$-functions that are intimately tied to modular forms, the true depth of such observations did not come into view until further work on complex analysis was carried out in the nineteenth century. 

It turns out that all modular forms are related to partitions in a very direct way. Here we recall the case of this connection for modular forms for the full modular group $\textrm{SL}_2(\Z)$. We then use these modular forms to define the second class of functions that are the topic of this paper, the {\it zeta polynomials} associated to modular forms.

\subsection{Partitions and modular forms}

In a paper from 2004  \cite{BKO}, the first author, Bruinier, and Kohnen
investigated the values of
a certain sequence of modular functions in connection with
the arithmetic properties of meromorphic modular forms on
$\textrm{SL}_2(\Z)$. One of the main results in the paper shows that integer partitions and
a universal sequence of polynomials encode the Fourier
expansions of modular forms.

Here we recall this result, which can be thought of as a precise formulation of the assertion that
a modular form is distinguished by its ``first few coefficients." In fact, by making use of partitions we have
an effective recursive procedure which computes the Fourier coefficients in order.

We take $q:=e^{2\pi i z}$ throughout this section. Now, suppose that
$$
f(z)=\sum_{n=h}^{\infty}a_f(n)q^n
$$
is a weight $k \in 2\Z$ meromorphic modular form on $\textrm{SL}_2(\Z)$.
If $k\geq 2$
is even, then let $E_k(z)$ denote the normalized Eisenstein series
\begin{equation}
  E_{k}(z):=1-\frac{2k}{B_k}\sum_{n=1}^{\infty}\sigma_{k-1}(n)q^n.
\end{equation}
Here $B_k$ denotes the  usual $k$th Bernoulli number and
$\sigma_{k-1}(n):=\sum_{d\mid n}d^{k-1}$. If $k>2$,
then $E_k(z)$ is a weight $k$ modular form on $\textrm{SL}_2(\Z)$; although
the Eisenstein series
\begin{equation}
E_2(z)=1-24\sum_{n=1}^{\infty}\sigma_1(n)q^n
\end{equation}
is not a modular form, it also plays an important role.
As usual, let $j(z)$ denote the modular function on $\textrm{SL}_2(\Z)$ which is
holomorphic on $\mathbb{H}$, the upper half of the complex plane, with Fourier expansion
$$
  j(z):=q^{-1}+744+196884q+21493760q^2+\cdots. 
$$
We will require a specific sequence of modular functions $j_m(z)$; to
define this sequence, we set
\begin{equation}\label{1.4}
j_0(z):=1\ \ \ \ \ \ {\text {\rm and}}\ \ \
\ \ \ j_1(z):=j(z)-744. 
\end{equation}
If $m\geq 2$, then define $j_m(z)$ by
\begin{equation}\label{1.5}
j_m(z):= j_1(z) \ |
\ T_0(m), 
\end{equation}
where $T_0(m)$ is the usual normalized $m$th weight
zero Hecke operator.
Each $j_m(z)$ is a monic polynomial in $j(z)$ of degree $m$.
Here we list the first few:
\begin{displaymath}
\begin{split}
&j_0(z)=1,\\
&j_1(z)=j(z)-744=q^{-1}+196884q+\cdots,\\
&j_2(z)=j(z)^2-1488j(z)+159768=q^{-2}+42987520q+\cdots,\\
&j_3(z)=j(z)^3-2232j(z)^2+1069956j(z)-36866976=q^{-3}+2592899910q+\cdots.
\end{split}
\end{displaymath}

Let $\frak{F}$ denote the usual fundamental domain of the action
of $\textrm{SL}_2(\Z)$ on $\mathbb{H}$. By assumption, $\frak{F}$ does not
include the cusp at $\infty$. Throughout, let $i:=\sqrt{-1}$ and
let $\omega:=(1+\sqrt{-3})/2$. If $\tau\in \frak{F}$, then define
$e_{\tau}$ by
\begin{equation}\label{1.7}
e_{\tau}:=\begin{cases} 1/2 \ \ \ \ \ &{\text {\rm if}}\ \tau=i,\\
                 1/3 \ \ \ \ \ &{\text {\rm if}}\ \tau=\omega,\\
                   1 \ \ \ \ \ &{\text {\rm otherwise}}.
\end{cases}
\end{equation}

By studying the logarithmic derivatives of modular forms, the first author, Bruinier and Kohnen obtained the 
 next result in \cite{BKO}, which offers universal polynomial recursion formulas for the Fourier coefficients of modular forms.

\begin{theorem}\label{BKOThm1} For every positive integer $n$
define
$F_n(x_1,\dots,x_{n})\in \Q[x_1,\dots,x_{n}]$ by
\begin{displaymath}
\begin{split}
&F_n(x_1,\dots,x_{n})
:=-\frac{2x_1\sigma_1(n)}{n}\\
& \ +\sum_{\substack{m_1,\dots,m_{n-1}\geq 0,\\
m_1+2m_2+\cdots+(n-1)m_{n-1}=n}}
(-1)^{m_1+\cdots+m_{n-1}}\cdot
\frac{(m_1+\cdots+m_{n-1}-1)!}{m_1!\cdots m_{n-1}!} \cdot
x_2^{m_1}\cdots x_{n}^{m_{n-1}}.
\end{split}
\end{displaymath}
If $ f(z)=q^h+\sum_{n=1}^{\infty}a_f(h+n)q^{h+n}$ is a weight $k$ meromorphic
modular  form on $\textrm{SL}_2(\Z)$, then for every positive integer $n$ we
have
$$a_{f}(h+n)=F_n(k,a_{f}(h+1),\dots,a_{f}(h+n-1))-\frac{1}{n}
\sum_{\tau\in \frak{F}}e_{\tau} \ord_{\tau}(f)\cdot
j_{n}(\tau).$$
\end{theorem}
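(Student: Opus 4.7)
The plan is to compute the logarithmic derivative $\theta f / f$, where $\theta := q\,d/dq = (2\pi i)^{-1}\,d/dz$, in two independent ways and equate Fourier coefficients. Writing $f(z) = q^h P(q)$ with $P(q) = 1 + \sum_{j\geq 1} a_f(h+j)\,q^j$, one has $\theta f/f = h + \sum_{n\geq 1} c_n\,q^n$, and the theorem will drop out as soon as I have two complementary formulas for $c_n$: a purely formal one expressing $c_n$ as a polynomial in the first $n$ values of $a_f$, and a modular one expressing $c_n$ in terms of $\sigma_1(n)$ together with the values $j_n(\tau)$ at the zeros and poles of $f$.

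For the formal side, I would expand $\log P = \sum_{r\geq 1}(-1)^{r-1} A^r/r$ with $A := \sum_{j\geq 1} a_f(h+j)\,q^j$, group terms in $A^r$ by multi-set of exponents, and apply the multinomial theorem. Writing $M := m_1+\cdots+m_{n-1}$, a short computation yields
\[
c_n \;=\; n\,[q^n]\log P \;=\; n\,a_f(h+n) \;-\; n\sum_{\substack{m_1+2m_2+\cdots+(n-1)m_{n-1}=n\\ m_i\geq 0}} \frac{(-1)^{M}(M-1)!}{m_1!\cdots m_{n-1}!}\prod_{i=1}^{n-1} a_f(h+i)^{m_i},
\]
where the leading $n\,a_f(h+n)$ is the unique contribution from the singleton partition $m_n=1$ (note $m_n\in\{0,1\}$ since $n\cdot m_n \leq n$), and the displayed sum is precisely $-n$ times the multinomial expression appearing on the second line of the definition of $F_n$.

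For the modular side, I would invoke Ramanujan's derivative identity: for a weight-$k$ meromorphic modular form $f$, the combination $\theta f - \tfrac{k}{12} E_2\,f$ is a weight-$(k{+}2)$ meromorphic modular form, so
\[
\Phi(z) \;:=\; \frac{\theta f}{f} - \frac{k}{12}\,E_2(z)
\]
is a weight-$2$ meromorphic modular form on $\SL_2(\Z)$ whose poles in $\mathfrak{F}$ occur exactly at the zeros and poles of $f$, with residues governed by $\ord_\tau(f)$ and the elliptic stabilizer factors $e_\tau$. Since no nonzero holomorphic weight-$2$ modular forms exist on $\SL_2(\Z)$, the form $\Phi$ is determined by its principal parts, and invoking the Asai--Kaneko--Ninomiya generating function
\[
\sum_{n\geq 0} j_n(\tau)\,q^n \;=\; \frac{E_4(z)^2\,E_6(z)/\Delta(z)}{j(z) - j(\tau)},
\]
which exhibits $\{j_n\}$ as the Faber polynomials for $j$, one reads off $[q^n]\Phi = -\sum_{\tau \in \mathfrak{F}} e_\tau\,\ord_\tau(f)\,j_n(\tau)$ for all $n\geq 1$. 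Combined with $[q^n](k/12)E_2 = -2k\,\sigma_1(n)$, this produces the closed form
\[
c_n \;=\; -2k\,\sigma_1(n) \;-\; \sum_{\tau\in\mathfrak{F}} e_\tau\,\ord_\tau(f)\,j_n(\tau).
\]

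Setting the formal and modular expressions for $c_n$ equal and solving for $a_f(h+n)$ delivers the theorem: the $-2k\sigma_1(n)/n$ matches the $-2x_1\sigma_1(n)/n$ term in $F_n$; the factor of $n$ in front of the formal multinomial sum cancels against the $1/n$ from the inversion, reproducing exactly the displayed summation in $F_n$; and the $\tau$-sum becomes the final contribution $-\tfrac{1}{n}\sum_\tau e_\tau\,\ord_\tau(f)\,j_n(\tau)$. The principal technical obstacle is the modular identification: verifying the Fourier expansion of $\Phi$ requires a careful residue analysis of $\theta f/f$ at each zero and pole of $f$ in $\mathfrak{F}$, correctly folding in the elliptic weights $e_i=1/2$ and $e_\omega=1/3$ (and checking consistency with the valence formula $h + \sum_\tau e_\tau\,\ord_\tau(f) = k/12$ at the constant term), together with the Asai--Kaneko--Ninomiya/Faber identity for $\sum_{n\geq 0} j_n(\tau)q^n$. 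Everything else is formal power-series bookkeeping.
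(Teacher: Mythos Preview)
Your proposal is correct and follows the same logarithmic-derivative strategy that the paper attributes to \cite{BKO}; indeed the paper does not supply its own proof of this theorem but only states it, noting that it is obtained ``by studying the logarithmic derivatives of modular forms.'' Your two-sided computation of $c_n=[q^n](\theta f/f)$---the formal Newton--Girard/multinomial expansion on one side and the identity $\theta f/f-\tfrac{k}{12}E_2=-\sum_{\tau}e_\tau\ord_\tau(f)\sum_{n\geq0}j_n(\tau)q^n$ via the Asai--Kaneko--Ninomiya generating function on the other---is exactly the argument of the Bruinier--Kohnen--Ono paper, including the consistency check with the valence formula at the constant term.
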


\begin{remark} 
Theorem~\ref{BKOThm1} illustrates a deep connection between integer partitions and the coefficients of modular forms. An inspection of the summation in the theorem reveals the role of partitions: the partitions of $n$, apart from $(n)$ itself, determine
the recurrence formula for $a_f(h+n)$. We note that for each such partition
\[
\lambda=\left( (n-1)^{m_{n-1}}(n-2)^{m_{n-2}}...2^{m_2} 1^{m_1} \right)\vdash n
\] 
with $m_k$ being the multiplicity of $k$ as a part, 
we can rewrite $m_1+m_2+...+m_{n-1}=\ell(\lambda)$.
\end{remark}

\noindent
The first few polynomials $F_n$ are
\begin{displaymath}
\begin{split}
&F_1(x_1):=-2x_1,\\
&F_2(x_1,x_2):=-3x_1+\frac{x_2^2}{2},\\
&F_3(x_1,x_2,x_3):=-\frac{8x_1}{3}-\frac{x_2^3}{3}+x_2x_3,\\
&F_4(x_1,x_2,x_3,x_4):=-\frac{7x_1}{2}-x_2^2x_3+x_2x_4+\frac{x_2^4}{4}
+\frac{x_3^2}{2}.\end{split}
\end{displaymath}
The $n=1$ case of Theorem 1
implies that
$$
a_f(h+1)=60k-744h-\sum_{\tau\in \frak{F}}e_{\tau}\ord_{\tau}(f)\cdot
j(\tau).
$$
\medskip
\noindent
{\bf Example 1.}
Since
$$\Delta(z)=\sum_{n=1}^{\infty}\tau(n)q^n=q-24q^2+252q^3-\cdots,
$$
the unique normalized
weight 12 cusp form on $\textrm{SL}_2(\Z)$,
is nonvanishing in $\mathbb{H}$,
Theorem~\ref{BKOThm1} implies that
$$
\tau(n+1)=F_n(12,\tau(2),\dots,\tau(n)).
$$

\subsection{Manin's zeta polynomials}\label{ManinZetaPolySection} Theorem~\ref{BKOThm1} shows that integer partitions and universal polynomials play central roles in computing the Fourier coefficients of modular forms. The only additional data required is a form's leading coefficient, weight, and its divisor. In the spirit of the previous section, it is then natural to ask
for a theory of zeta functions related to partitions, which somehow arises from this connection. Here we address this problem by giving a brief exposition of recent
work on a problem of Manin on zeta polynomials.


To begin, we consider any newform $f=\sum_{n\geq1}a_nq^n\in S_k(\Gamma_0(N))$ of even weight $k$ and level $N$. The standard zeta-type function associated to $f$ is its $L$-function
\[
L(f,s):=\sum_{n\geq1}a_nq^n
,
\]
which may be normalized so that the completed $L$-function 
$$
\Lambda(f,s) := \Big(\frac{\sqrt{N}}{2\pi}\Big)^s \Gamma(s) L(f,s)  
$$ 
may be analytically continued and satisfies the functional equation 
\[
\Lambda(f,s) = \epsilon(f) \Lambda(f,k-s)
,
\]
with $\epsilon(f)= \pm 1$.  The {\it critical $L$-values} are the complex numbers
$L(f,1)$, $L(f,2)$, $\dots$, $L(f,k-1)$.  Manin's conjecture then states that these critical $L$-values can be suitably packaged to fit into the following framework.

\begin{definition}[Manin]\label{ManinZetaPolyDefn} A polynomial $Z(s)$ is a {\bf zeta-polynomial} if it satisfies the following criteria: 
\begin{enumerate}
\item (Naturality) It is arithmetic-geometric in origin.

\smallskip
\item (Functional Equation) For $s\in \C$ we have $Z(s)=\pm Z(1-s)$.

\smallskip
\item (Riemann Hypothesis) If $Z(\rho)=0$, then $\re(\rho)=1/2$. 

\smallskip
\item The values $Z(-n)$ have a ``nice'' generating function 

\smallskip
\item The values $Z(-n)$ encode arithmetic-geometric information.

\end{enumerate}
\end{definition}

We remark that, of course, it is very important that the arithmetic-geometric nature of the function in part (1) of the definition is satisfied. For example, any polynomial with real coefficients and satisfying the Riemann Hypothesis automatically satisfies the functional equation in part (2), and hence the compelling arithmetic nature of the particular object being constructed is critical. Part (4) is also central to Manin's idea, and is meant to parallel Euler's power series expansion for the Riemann zeta function:
\begin{equation}\label{BN}
\frac{t}{1-e^{-t}}=1+\frac12 t-t\sum_{n=1}^{\infty}{\zeta(-n)}\cdot \frac{t^n}{n!}
\end{equation}
This generating function \eqref{BN} for the values $\zeta(-n)$ also has a well-known interpretation in $K$-theory \cite{FG}, in line with part (5) of the above definition. Namely, it is essentially the generation function
    for the torsion of the $K$-groups for $\Q$. 

Recently in \cite{ORS}, Sprung and the first two authors confirmed Manin's speculations and offered the following resolution to his question. To describe this result, first consider the $m$th {\it weighted moments} of critical values:
\begin{equation}\label{Moments}
M_f(m):=\sum_{j=0}^{k-2}\left(\frac{\sqrt N}{2\pi}\right)^{j+1}\frac{L(f,j+1)}{(k-2-j)!}j^m=\frac1{(k-2)!}\sum_{j=0}^{k-2}\binom{k-2}j\Lambda(f,j+1)j^m.
\end{equation}
For positive integers $n$, we recall the usual generating function for
the (signed) Stirling numbers of the first kind: 
\begin{equation}
(x)_n=x(x-1)(x-2)\cdots (x-n+1)=:\sum_{m=0}^n s(n,m)x^m
\end{equation}
Using these numbers, the zeta-function satisfying Manin's definition is given by 
\begin{equation}\label{Zdef}
Z_f(s):=\sum_{h=0}^{k-2} (-s)^h \sum_{m=0}^{k-2-h} \binom{m+h}{h}\cdot s(k-2,m+h)\cdot M_f(m).
\end{equation}
The main result of \cite{ORS} confirms that this natural object satisfies a zeta-type functional equation and the Riemann Hypothesis.
\begin{theorem}[Theorem 1.1 of \cite{ORS}]\label{Thm1} If $f\in S_k(\Gamma_0(N))$ is an even weight $k\geq 4$ newform, then the following are true:
\begin{enumerate}
\item For all $s\in \C$ we have that $Z_f(s)=\epsilon(f)Z_f(1-s)$.
\item If $Z_f(\rho)=0$, then $\re(\rho)=1/2$.
\end{enumerate}
\end{theorem}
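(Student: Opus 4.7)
The plan is to view $Z_f(s)$ as the image of the critical $L$-values $\Lambda(f,1),\ldots,\Lambda(f,k-1)$ under an explicit linear map whose design automatically converts two known symmetries into the desired conclusions: the palindromic symmetry $\Lambda(f,j+1)=\epsilon(f)\Lambda(f,k-1-j)$ (a direct consequence of the functional equation for $\Lambda$) should yield the functional equation $Z_f(s)=\epsilon(f)Z_f(1-s)$, while a ``Riemann Hypothesis'' for an associated period polynomial of $f$ should translate into the fact that all zeros of $Z_f$ lie on the critical line.

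For part (1), my first step is to repackage \eqref{Zdef} in a more symmetric form. Expanding via the Stirling generating function $(x)_{n}=\sum_{m} s(n,m)x^m$ and interchanging the two finite sums should produce a representation
\[
Z_f(s)=\sum_{j=0}^{k-2}P_j(s)\,\Lambda(f,j+1)
\]
for explicit degree-$(k-2)$ polynomials $P_j(s)$, which I would identify (up to elementary constants) with appropriate falling-factorial or Pochhammer expressions in $s$. The key combinatorial identity to verify is $P_{k-2-j}(1-s)=P_j(s)$. Once that is in hand, substituting $s\mapsto 1-s$ into the above sum and reindexing $j\mapsto k-2-j$ produces the factor $\epsilon(f)$ from the palindromic symmetry of the $\Lambda$'s, and the functional equation follows.

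The main obstacle is the Riemann Hypothesis in part (2). Here my strategy is to route the argument through the period polynomial
\[
r_f(z)=\int_{0}^{i\infty}f(\tau)(\tau-z)^{k-2}\,d\tau,
\]
whose coefficients are (up to scaling) the quantities $\binom{k-2}{j}\Lambda(f,j+1)$ appearing inside $M_f(m)$. Results of Conrey--Farmer--Imamoglu and of El-Guindy--Raji establish that all zeros of $r_f(z)$ lie on the unit circle $|z|=1$. The plan is then to exhibit an explicit rational change of variable sending $|z|=1$ to $\re(s)=1/2$, together with a polynomial identity that realizes $Z_f(s)$ as the evaluation of $r_f$ (or of a close variant produced by applying a hypergeometric-type differential operator) under this change. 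The real work, and the main obstacle, is verifying that the Stirling/moment machinery in \eqref{Zdef} is precisely what encodes this transformation: one needs to rewrite the inner double sum $\sum_{m}\binom{m+h}{h}s(k-2,m+h)M_f(m)$ using the Pochhammer expansion of $(x)_{k-2}$ and match the result, coefficient by coefficient in $s$, against a Rodriguez-Villegas--type transform of the coefficients of $r_f$. Once the identification is in place, the zero-locations of $Z_f$ are forced by those of $r_f$, completing the proof.
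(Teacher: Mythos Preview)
Your overall architecture matches the paper's: recognize $Z_f$ as the Rodriguez-Villegas transform of the period polynomial $R_f$ (the paper encodes this identification as Theorem~\ref{Thm2}, the identity $R_f(z)/(1-z)^{k-1}=\sum_{n\ge0}Z_f(-n)z^n$), and then pull the unit-circle zeros of $R_f$ across to critical-line zeros of $Z_f$ via Theorem~\ref{RVThm}. There is, however, a genuine gap in your input for part~(2). The results of Conrey--Farmer--Imamo\u{g}lu and El-Guindy--Raji that you invoke apply only to Hecke eigenforms on $\SL_2(\Z)$ (and the former only to the \emph{odd} period polynomial), whereas the theorem is stated for newforms on $\Gamma_0(N)$ of arbitrary level. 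For that generality one needs the Riemann Hypothesis for period polynomials proved by Jin--Ma--Ono--Soundararajan (Theorem~\ref{RHPP} here); without it your argument does not reach the stated conclusion.

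A second point you have not addressed is that the Rodriguez-Villegas transform requires the input polynomial to satisfy $U(1)\neq0$. When $\epsilon(f)=-1$ one has $R_f(1)=0$, and the paper handles this via Lemma~\ref{ValueOneNotZeroPP}, showing the zero is simple and passing to $\widehat R_f(z)=R_f(z)/(1-z)$ with the degree lowered by one. This case split is also how the paper obtains part~(1): rather than your proposed direct combinatorial verification $P_{k-2-j}(1-s)=P_j(s)$, it simply reads the functional equation off Theorem~\ref{RVThm}, which gives $Z(1-s)=(-1)^{d-1}Z(s)$ with $d=k-1-\delta_{-1,\epsilon(f)}$; since $k$ is even, $(-1)^{d-1}=\epsilon(f)$. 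Your direct approach to (1) could presumably be made to work, but it is not the route taken, and it would not by itself recover the sign without an argument tantamount to the $(1-z)$ analysis.
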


In accordance with Definition \ref{ManinZetaPolyDefn}, we also want to find natural interpretations of the values $Z_f(-n)$ at negative integers and for their generating functions. 
This may be accomplished by considering the ``Rodriguez-Villegas Transform''  of  \cite{RV}. Theorem~\ref{Thm1}  is naturally related to the arithmetic of period polynomials\footnote{This is a slight renormalization of the period polynomials considered in
references such as \cite{CPZ, KZ, PP, Z}.}
\begin{equation}
R_f(z):=\sum_{j=0}^{k-2} \binom{k-2}{j}\cdot \Lambda(f,k-1-j)\cdot z^j.
\end{equation}
The values of $Z_f(s)$ at non-positive integers are then the coefficients expanded around $z=0$ of the rational function $$\frac{R_f(z)}{(1-z)^{k-1}}.$$
The following result is the analogue of \eqref{BN} for the Riemann zeta function, and is our answer to part (4) of the above definition.

\begin{theorem}[Theorem 1.3 of \cite{ORS}]\label{Thm2} Assuming the notations and hypotheses above, as a power series in $z$ we have
$$
\frac{R_f(z)}{(1-z)^{k-1}}=\sum_{n=0}^{\infty} Z_f(-n)z^n.
$$
\end{theorem}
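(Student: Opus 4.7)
\begin{proof}[Proof proposal for Theorem~\ref{Thm2}]
The plan is to compute both sides as explicit sums in the quantities $\Lambda(f,j+1)$ for $0 \leq j \leq k-2$ and show that, coefficient by coefficient in $z^n$, they coincide. Since $R_f(z)/(1-z)^{k-1}$ is manifestly a power series (rational with denominator nonvanishing at $0$), it suffices to identify the $z^n$ coefficient with $Z_f(-n)$ for every nonnegative integer $n$.

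\textbf{Step 1: Extracting the coefficient of $z^n$ on the left.} Reindex $R_f$ by $j \mapsto k-2-j$ to write
\[
R_f(z)=\sum_{j=0}^{k-2}\binom{k-2}{j}\Lambda(f,j+1)\,z^{k-2-j}.
\]
Using the standard identity $[z^n](1-z)^{-(k-1)}=\binom{n+k-2}{k-2}$, the coefficient of $z^n$ in $R_f(z)/(1-z)^{k-1}$ equals
\[
\sum_{j=0}^{k-2}\binom{k-2}{j}\Lambda(f,j+1)\binom{n+j}{k-2}
=\frac{1}{(k-2)!}\sum_{j=0}^{k-2}\binom{k-2}{j}\Lambda(f,j+1)\,(n+j)_{k-2},
\]
where $(x)_{k-2}=x(x-1)\cdots(x-k+3)$ is the falling factorial.

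\textbf{Step 2: Simplifying $Z_f(-n)$.} Substitute $s=-n$ into \eqref{Zdef} and then insert the definition \eqref{Moments} of $M_f(m)$ to obtain
\[
Z_f(-n)=\frac{1}{(k-2)!}\sum_{j=0}^{k-2}\binom{k-2}{j}\Lambda(f,j+1)\sum_{h=0}^{k-2}\sum_{m=0}^{k-2-h}\binom{m+h}{h}s(k-2,m+h)\,n^h j^m.
\]
Set $\ell=m+h$ and reorganize the inner double sum as $\sum_{\ell=0}^{k-2}s(k-2,\ell)\sum_{h=0}^{\ell}\binom{\ell}{h}n^h j^{\ell-h}$. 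The binomial theorem collapses the inner $h$-sum to $(n+j)^\ell$, so the bracket becomes $\sum_{\ell=0}^{k-2}s(k-2,\ell)(n+j)^\ell$, which by the generating function of the signed Stirling numbers of the first kind equals exactly $(n+j)_{k-2}$.

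\textbf{Step 3: Conclusion.} After the Stirling collapse, Step~2 yields
\[
Z_f(-n)=\frac{1}{(k-2)!}\sum_{j=0}^{k-2}\binom{k-2}{j}\Lambda(f,j+1)\,(n+j)_{k-2},
\]
which is precisely the expression computed in Step~1 for $[z^n]\,R_f(z)/(1-z)^{k-1}$. This completes the proof.
\end{proof}

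The only real obstacle is the bookkeeping in Step~2: one must recognize that the change of variables $\ell=m+h$ is the right move, because it converts the awkward product $\binom{m+h}{h}n^h j^m$ into the binomial coefficient $\binom{\ell}{h}$ so that the $h$-sum collapses to $(n+j)^\ell$, and only then does the Stirling identity $(x)_{k-2}=\sum_\ell s(k-2,\ell)x^\ell$ apply with $x=n+j$. Once this pivot is made, everything else is formal manipulation, and the apparently elaborate definition of $Z_f$ in \eqref{Zdef} is seen to be precisely engineered so that its values at negative integers are the Taylor coefficients of $R_f(z)/(1-z)^{k-1}$.
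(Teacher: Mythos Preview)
Your proof is correct. The paper itself does not give a detailed argument here, stating only that ``this proof is a combinatorial exercise; we refer the reader interested in the details to \cite{ORS}.'' Your Steps~1--3 carry out exactly that exercise: expanding $R_f(z)/(1-z)^{k-1}$ via the negative binomial series, unfolding the definition of $Z_f(-n)$ through the moments $M_f(m)$, and collapsing via the substitution $\ell=m+h$, the binomial theorem, and the Stirling identity $(x)_{k-2}=\sum_\ell s(k-2,\ell)x^\ell$. This is the natural (and presumably the intended) combinatorial verification, so there is nothing to compare---you have supplied the omitted details.
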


In accordance with the aforementioned $K$-theoretic interpretation of the values of $\zeta(s)$ at negative integers, and in relation to part (5) of Definition \ref{ManinZetaPolyDefn},
 it is natural to ask whether the $z$-series in Theorem \ref{Thm2} 
    has an analogous interpretation---that is, what (if any) arithmetic information is encoded by the values $Z_f(-n)$?
Hints along these lines were first offered by Manin in \cite{M}, where he produced similar zeta-polynomials by applying the Rodriguez-Villegas transform to the odd period polynomials for Hecke eigenforms on $\operatorname{SL}_2(\Z)$ studied by
Conrey, Farmer, and Imamo$\mathrm{\bar{g}}$lu \cite{CFI}. He asked for a generalization for the full period polynomials
for such Hecke eigenforms in connection to recent work of El-Guindy and Raji \cite{ER}. Theorems \ref{Thm1} and \ref{Thm2} answer this question and
provide the generalization 
for all even weight $k\geq 4$ newforms on congruence subgroups of the form $\Gamma_0(N)$. Theorem~\ref{Thm1} additionally offers an explicit combinatorial description of the zeta-polynomials in terms of weighted moments. In fact, the developments of \cite{ORS} are made possible by the recent general proof of the following Riemann Hypothesis for period polynomials.
\begin{theorem}[\cite{RHPP}]\label{RHPP}
For any even integer $k\geq 4$, and any level $N$, all of the zeros of the period polynomial $R_f(z)$ are on the unit circle.  
\end{theorem}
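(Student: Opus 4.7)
The plan is to follow the Jin--Ma--Ono--Soundararajan strategy, combining an integral representation of the period polynomial with Deligne's bounds on Fourier coefficients to isolate a dominant term whose zeros visibly lie on the unit circle.

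First, I would exploit the functional equation as a symmetry constraint. The identity $\Lambda(f,s)=\epsilon(f)\Lambda(f,k-s)$ translates directly into the self-reciprocity
$$z^{k-2}\,R_f(1/z) \;=\; \epsilon(f)\,R_f(z),$$
so the multi-set of zeros of $R_f$ is invariant under the involution $z\mapsto 1/\bar z$ (after a Galois-conjugation on the Hecke eigenvalues, if needed). Since $\deg R_f = k-2$, it therefore suffices to exhibit $k-2$ zeros on the circle $|z|=1$; any stray zero off the circle would produce an additional partner and violate the degree count.

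Next I would rewrite $R_f$ in integral form by unfolding the Mellin representation of $\Lambda(f,s)$. This yields
$$R_f(z) \;=\; C_k \int_0^{\infty} f(it)\,(it-z)^{k-2}\,dt$$
for an explicit constant $C_k$ depending on $k$ and $N$. Expanding $f(it)=\sum_{n\geq 1}a_n e^{-2\pi n t}$ and integrating termwise produces a decomposition
$$R_f(z) \;=\; D_k \sum_{n\geq 1}\frac{a_n}{n^{k-1}}\,P_{k}\!\left(\tfrac{2\pi n z}{\sqrt N}\right),$$
where $P_k$ is the explicit polynomial times exponential arising from an incomplete Gamma integral. The $n=1$ summand, which uses only $a_1=1$, plays the role of a main term; its zeros can be computed by an elementary calculation and lie on $|z|=1$ at approximately uniformly spaced angles, accounting for exactly $k-2$ points.

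Finally, on the unit circle $|z|=1$ I would use Deligne's bound $|a_n|\leq d(n)\,n^{(k-1)/2}$ (a direct input from Deligne's theorem, valid since $f$ is a newform) to show that the tail $\sum_{n\geq 2}$ is strictly smaller in modulus than the $n=1$ main term, uniformly outside small neighbourhoods of the main-term's zeros. A Rouch\'e-style argument on small arcs then forces one zero of $R_f$ near each main-term zero, producing the required $k-2$ zeros on $|z|=1$ and completing the proof by degree count. The principal obstacle will be making the strict inequality quantitative and uniform in $N$ and $k$: near each zero of the main term the main term itself becomes small, so Deligne's bound must be paired with sharp estimates for the divisor function and for $P_k$ to guarantee domination. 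The small-weight cases $k=4,6$, where there are only a couple of zeros to account for and the margin is slimmest, will demand the most delicate bookkeeping.
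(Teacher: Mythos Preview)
Your outline diverges substantially from the paper's argument, and it contains a genuine gap.

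\textbf{What the paper actually does.} The sketch given here does not use Deligne's bound, the Fourier expansion of $f$, or Rouch\'e. Instead one writes $R_f$ in terms of the auxiliary polynomial
\[
P_f(X)=\tfrac12\binom{2m}{m}\Lambda\!\left(f,\tfrac k2\right)+\sum_{j=1}^m\binom{2m}{m+j}\Lambda\!\left(f,\tfrac k2+j\right)X^j,\qquad m=\tfrac{k-2}{2},
\]
so that the claim becomes: all zeros of $T_f(X)=P_f(X)+\epsilon(f)P_f(1/X)$ lie on $|X|=1$. Substituting $X=e^{i\theta}$ turns $T_f$ into a real trigonometric polynomial in $\cos$ or $\sin$ (according to $\epsilon(f)$), and results of P\'olya and Szeg\H{o} locate enough sign changes provided the coefficient sequence $\binom{2m}{m+j}\Lambda(f,k/2+j)$ has the right unimodality. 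That unimodality is reduced to a chain of inequalities among the completed critical values $\Lambda(f,j)$, and those inequalities are extracted from the Hadamard factorization of $\Lambda(f,s)$. This handles large $k$ or $N$; the remaining small cases are done by hand or numerically. Nowhere does one isolate a single Fourier mode as a ``main term.''

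\textbf{Where your argument breaks.} Carrying out your integral representation explicitly gives
\[
R_f(z)=\int_0^\infty f\!\left(\tfrac{iu}{\sqrt N}\right)(u+z)^{k-2}\,du
=\sum_{n\ge1}\frac{a_n}{n^{k-1}}\left(\tfrac{\sqrt N}{2\pi}\right)^{k-1}Q_k\!\left(\tfrac{2\pi n z}{\sqrt N}\right),
\]
with $Q_k(w)=\int_0^\infty e^{-v}(v+w)^{k-2}\,dv=(k-2)!\sum_{j=0}^{k-2}w^j/j!$. So your $n=1$ ``main term'' is, up to rescaling, the truncated exponential polynomial $\sum_{j\le k-2}w^j/j!$. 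Its zeros are classically known to lie (after normalization) on the Szeg\H{o} curve $|we^{1-w}|=1$, not on any circle; in particular, in the $z$-variable they do \emph{not} sit on $|z|=1$, and for large $N$ they escape to $|z|\asymp\sqrt N$. Thus Rouch\'e against the $n=1$ term cannot place the zeros of $R_f$ on the unit circle. The deeper reason is that the self-inversive symmetry $z^{k-2}R_f(1/z)=\epsilon(f)R_f(z)$ comes from the full modular transformation of $f$ (equivalently, the functional equation of $\Lambda$); a single Fourier mode has no such symmetry, so there is no mechanism forcing its zeros onto $|z|=1$. Any workable main term must already carry the self-inversive structure, which is exactly why the paper's approach builds everything out of the critical values $\Lambda(f,j)$ rather than the Fourier coefficients $a_n$.
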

\begin{remark}
The reason that the term ``Riemann Hypothesis'' is fitting for this result is that the period polynomials satisfy a natural functional equation relating the values at $z$ and $-1/z$ (itself arising from the functional equation for $L(f,s)$), and this result states that all zeros of the period polynomials lie on the line of symmetry for their functional equations.
\end{remark}
\begin{remark}\label{PreciseRmk}
In fact, the authors of \cite{RHPP} prove a somewhat stronger statement, giving rather precise bounds on the location of the angles of these zeros.
\end{remark}

In line with part (5) of Definition \ref{ManinZetaPolyDefn}, we offer a conjectural combinatorial arithmetic-geometric interpretation of the $Z_f(s)$.
To this end, we make use of the Bloch-Kato Conjecture, which offers a Galois cohomological interpretation for critical values of motivic $L$-functions
\cite{BK}. Here we consider the special case of the critical values $L(f,1), L(f,2),\dots, L(f,k-1)$.
These conjectures are concerned with motives $\mathcal{M}_f$ associated to $f$, but the data needed for this conjecture can be found in the {\it $\lambda$-adic realization} $V_\lambda$ of $\mathcal{M}_f$ for a prime $\lambda$ of $\Q(f)$, where $\Q(f)$ is the field generated by the Hecke eigenvalues $a_n(f)$ (where we have $a_1(f)=1$). The Galois representation $V_\lambda$ associated to $f$ is due to Deligne, and we recall the essential properties below. For a high-brow construction of $V_\lambda$ from $\mathcal{M}_f$, we refer to the seminal paper of Scholl \cite{scholl}.

Deligne's theorem says that for a prime $\lambda$ of $O_{\Q(f)}$ lying above $l$, there is a continuous linear representation $V_\lambda$ unramified outside $lN$
$$ \rho_{f,\lambda}: \Gal(\overline{\Q}/\Q)\rightarrow \GL(V_\lambda),$$
 so that for a prime $p \nmid lN$, the arithmetic Frobenius $\Frob_p$ satisfies 
 $$\Tr(\rho_f(\Frob_p^{-1}))=a_p(f), \text{ and } \det(\rho_f(\Frob_p^{-1}))=p^{k-1}.$$
 
We may also consider the $j$th Tate twist $V_\lambda(j)$, which is $V_\lambda$ but with the action of Frobenius multiplied by $p^j$.  After choosing a $ \Gal(\overline{\Q}/\Q)$-stable lattice $T_\lambda$ in $V_\lambda$, we may consider the short exact sequence $$ 0 \longrightarrow T_\lambda(j) \longrightarrow V_\lambda(j)\stackrel{\pi}\longrightarrow V_\lambda/T_\lambda(j)\longrightarrow 0.$$
Bloch and Kato define local conditions $H^1_\mathbf{f}(\Q_p,V_\lambda(j))$ for each prime $p$. We let $H^1_\mathbf{f}(\Q,V_\lambda(j))$ be the corresponding global object, i.e., the elements of $H^1(\Q,V_\lambda(j))$ whose restriction at $p$ lies in $H^1_\mathbf{f}(\Q_p,V_\lambda(j))$. Analogously, we may define $H^1_\mathbf{f}(\Q,V_\lambda/T_\lambda(j))$, which is the Bloch--Kato $\lambda$- Selmer group. The \v{S}afarevi\v{c}--Tate group is
$$\Sha_f(j)=\Directsum_\lambda \frac{H^1_\mathbf{f}(\Q,V_\lambda/T_\lambda(j))}{\pi_*H^1_\mathbf{f}(\Q,V_\lambda(j))}.$$
The Bloch--Kato Tamagawa number conjecture then claims the following.

\begin{conjecture}[Bloch--Kato]\label{bkt} Let $0\leq j\leq k-2$, and assume $L(f,j+1)\neq0$. Then we have
$$\frac{L(f,j+1)}{(2\pi i)^{j+1} \Omega^{(-1)^{j+1}}}=u_{j+1} \times \frac{\Tam(j+1) \# \Sha(j+1)}{\#H^0_\Q(j+1)\#H^0_\Q(k-1-j)}=:C(j+1). $$
\end{conjecture}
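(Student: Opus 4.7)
The statement is the \emph{Bloch--Kato Tamagawa number conjecture} for the motive of a newform, a celebrated open problem in arithmetic geometry. A complete proof for all even weight $k\geq 4$ newforms on $\Gamma_0(N)$ at every critical twist $0\leq j\leq k-2$ is not currently available, so any plan is necessarily a strategy for attacking known cases (and pushing the boundary) rather than a full proof. The overarching idea, pioneered by Kato, is to prove the conjectured identity one prime at a time: for each $\lambda\mid l$, reformulate the statement as an equality of fractional $O_{\Q(f)_\lambda}$-ideals and then sandwich the size of the Bloch--Kato Selmer group between two matching bounds.

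Concretely, my plan would proceed in three movements. First, I would recast the conjecture $\lambda$-adically: the ratio $C(j+1)/u_{j+1}$ should predict, up to the Tamagawa factor $\Tam(j+1)$ and the orders of $H^0_\Q(j+1)$ and $H^0_\Q(k-1-j)$, the size of the $\lambda$-component of $\Sha_f(j+1)$. Thus the target is an equality of fractional ideals relating $\#H^1_{\mathbf{f}}(\Q,V_\lambda/T_\lambda(j+1))$ with $L(f,j+1)$ divided by the appropriate period. Second, I would invoke Kato's Euler system built from zeta elements in $K_2$ of modular curves. Bounding cohomology classes via Kolyvagin-style derivative classes produces the divisibility $\#\Sha_f(j+1)\mid C(j+1)$ (up to controlled local factors), yielding half of the equality. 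Third, I would close the reverse divisibility using the Iwasawa Main Conjecture for modular forms: pass to the cyclotomic $\Z_l$-tower, compare the characteristic ideal of the Pontryagin dual of the Selmer group with the $l$-adic $L$-function of Amice--V\'elu and Vi\v{s}ik, and then descend via a Mazur-style control theorem evaluated at the specific twist $j+1$.

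The principal obstacle, and precisely the reason the conjecture remains open in full, lies in this reverse divisibility: the Iwasawa Main Conjecture for general modular forms is known only under significant hypotheses on the residual representation, on $l$, and on $N$, typically via the Skinner--Urban machinery using Eisenstein congruences on unitary groups such as $\operatorname{U}(2,2)$. I would expect the most demanding technical step to be the precise matching of local Tamagawa factors and Euler factors at primes of bad reduction with the interpolation formulas for the $l$-adic $L$-function, together with handling supersingular primes and reducible residual representations separately (via Pollack--Stevens plus/minus decompositions in the supersingular case, and delicate Eisenstein-descent arguments in the reducible case). Kato's Euler system is by now a standard tool, so the real labor lies in the Iwasawa-theoretic input and the careful local bookkeeping needed to convert divisibilities of characteristic ideals into the exact equality demanded by the conjecture.
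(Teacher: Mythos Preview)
Your proposal correctly recognizes that the Bloch--Kato Tamagawa number conjecture is a deep open problem, and you outline a reasonable modern strategy (Kato's Euler system for one divisibility, Skinner--Urban's Iwasawa-theoretic input for the other). However, there is nothing to compare here: the paper does \emph{not} prove this statement. It is labeled as a \emph{Conjecture} (Conjecture~\ref{bkt}), and the paper simply states it and then \emph{assumes} it as a hypothesis in Theorem~\ref{Thm3} (``Assuming the Bloch--Kato Conjecture and the notation above\ldots''). The proof of Theorem~\ref{Thm3} then reads, in its entirety, that one replaces the terms involving $L(f,j+1)$ by $\widetilde{C(j+1)}$ under this assumption.

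So the mismatch is not one of approach but of scope: you are sketching an attack on a major open conjecture, while the paper never claims to prove it and offers no argument toward it. Your strategy is a fair summary of the state of the art, but it is not something the paper attempts, and no proof is expected here.
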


Here, $\Omega^\pm$ denotes the Deligne period, $\Tam$ deontes the product of the Tamagawa numbers, $H^0_\Q$ is the set of global points, and $u_{j+1}$ is a non-specified unit of $\Q(f)$.

\begin{remark}
Note that $L(f,j+1)\neq0$ in this range provided that $j+1\neq k/2$. 
\end{remark}
We denote the normalized version of $C(j+1)$ by 
\begin{equation}
\widetilde{C(j+1)}=C(j+1)\cdot \frac{(i\sqrt{N})^{j+1}\Omega^{(-1)^{j+1}}}{(k-2-j)!},
\end{equation}
but when $L(f,j+1)=0$, we define $\widetilde{C(j+1)}:=0$. 

\begin{theorem}[Theorem 1.4 of \cite{ORS}]\label{Thm3} Assuming the Bloch-Kato Conjecture and the notation  above, we have that
 $$
 M_f(m)=\sum_{0\leq j\leq k-2} \widetilde{C(j+1)}j^m,
 $$
 which in turn implies for each non-negative integer $n$ that
 $$
 Z_f(-n)= \sum_{j = 0}^{k-2}\left(\sum_{h=0}^{ k-2}\sum_{m = 0}^{k-2-h} n^h  \binom{m+h}{h}\cdot s(k-2,m+h)\right)j^m\widetilde{C(j+1)}.
 $$
\end{theorem}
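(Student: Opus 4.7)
The strategy is to apply the Bloch--Kato conjecture directly to the definition of the weighted moments $M_f(m)$ in \eqref{Moments}, obtaining the first displayed identity, and then to substitute that closed form into \eqref{Zdef} evaluated at $s=-n$. Both halves are essentially formal manipulations once Conjecture \ref{bkt} is invoked.

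For the first identity, I would start from
$$M_f(m) = \sum_{j=0}^{k-2}\left(\frac{\sqrt{N}}{2\pi}\right)^{j+1}\frac{L(f,j+1)}{(k-2-j)!}\,j^m.$$
Conjecture \ref{bkt} gives $L(f,j+1) = C(j+1)\,(2\pi i)^{j+1}\,\Omega^{(-1)^{j+1}}$ in the range $0 \leq j \leq k-2$ whenever $L(f,j+1) \neq 0$; when the $L$-value vanishes, the $j$-th summand contributes zero on both sides by the convention $\widetilde{C(j+1)} := 0$. The transcendental prefactors collapse via $\left(\tfrac{\sqrt{N}}{2\pi}\right)^{j+1} (2\pi i)^{j+1} = (i\sqrt{N})^{j+1}$, and dividing by $(k-2-j)!$ reproduces exactly $\widetilde{C(j+1)}$ as defined just before the statement of the theorem. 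This yields the first claim.

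For the second identity, I would substitute $M_f(m) = \sum_{j=0}^{k-2} \widetilde{C(j+1)}\, j^m$ into the definition \eqref{Zdef}. At $s = -n$ we have $(-s)^h = n^h$, so
$$Z_f(-n) = \sum_{h=0}^{k-2} n^h \sum_{m=0}^{k-2-h} \binom{m+h}{h}\, s(k-2, m+h) \sum_{j=0}^{k-2} \widetilde{C(j+1)}\, j^m,$$
and interchanging the finite sum over $j$ to the outside (permissible since all sums are finite) produces the claimed expression.

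The main ``obstacle'' is really just bookkeeping: carefully matching the $(2\pi)^{j+1}$ coming from the completion $\Lambda(f,s)$ against the $(2\pi i)^{j+1}$ appearing in the Bloch--Kato normalization, and verifying that the corner case $L(f, j+1) = 0$ (which, when it occurs, happens only at the central point forced by the functional equation) is absorbed consistently by the convention $\widetilde{C(j+1)} := 0$. Beyond invoking Conjecture \ref{bkt} itself there is no analytic or arithmetic subtlety, which is why the theorem is a genuine consequence of Bloch--Kato rather than requiring an independent arithmetic-geometric input.
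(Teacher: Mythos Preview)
Your proposal is correct and follows exactly the route the paper takes: the paper's own proof is the single sentence ``The proof of Theorem~\ref{Thm3} follows immediately from replacing the terms involving $L(f,j+1)$ by $\widetilde{C(j+1)}$, and appropriate normalizations,'' and you have simply spelled out those normalizations and the substitution into \eqref{Zdef} in detail.
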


Finally, we offer an intriguing connection of the zeta polynomials $Z_f(s)$ to the world of combinatorial geometry. 
This investigation is motivated by the following question.
In light of the vast importance of the distribution of the zeros of the Riemann zeta function on (presumably) the critical line, it is natural to consider the distribution of the zeros of $Z_f(s)$ on the line $\re(s)=1/2$. 
Of course, the $Z_f(s)$ are polynomials, and so any interesting distributional properties only make sense in the context of limits of sequences of modular forms as the level or weights go to infinity. In this sense, one can ask: do their zeros behave in a manner which is analogous to
the zeros of the Riemann zeta-function $\zeta(s)$? Namely, how are their zeros distributed in comparison with the growth of
$$N(t):= \# \{  \rho=s+it\ : \ \zeta(\rho)=0 \ \ {\text {\rm with}}\ 0<t\leq T\}, 
$$
which is well known to satisfy
\begin{equation}\label{NT}
N(T)=\frac{T}{2\pi} \log\frac{T}{2\pi} -\frac{T}{2\pi}+O(\log T)?
\end{equation}
As we shall see, the zeros of $Z_f(s)$ 
behave in a manner that is somewhat analogous to (\ref{NT}) in terms of its highest zero.

To make this precise, it turns out that the correct answer lies in comparing $Z_f(s)$ with two families of combinatorial polynomials.
In what follows, we note that for $x,y\in\C$, the binomial coefficient $\binom xy$ is defined by 
\[
\binom{x}{y}:=\frac{\Gamma(x+1)}{\Gamma(y+1)\Gamma(x-y+1)}.
\]
We see below that the $Z_f(s)$, depending on $\epsilon(f)$, can naturally be estimated by the polynomials
\begin{equation}
H_{k}^{+}(s):=\binom{s+k-2}{k-2}+\binom{s}{k-2}
,
\end{equation}
\begin{equation}
H_{k}^{-}(s):=\sum_{j=0}^{k-3}\binom{s-j+k-3}{k-3}.
\end{equation}

\begin{theorem}[Theorem 1.3 of \cite{ORS}]\label{ZeroDistribution}
Assuming the notations and hypotheses above, the following are true: 
\begin{enumerate}
\item The zeros of $H^{-}_{k}(-s)$ lie on the line $\re(s)=1/2$,  and they are the complex numbers $\rho=\frac{1}{2}+it$ 
with $t\in \mathbb R$ such that the value of the monotonically decreasing function 
\[
h_k(t):=\sum_{j=0}^{k-3}\cot^{-1}\left(\frac{2t}{2j+1}\right)
\]
lies in the set $\{\pi,2\pi,\ldots,(k-3)\pi \}$. Similarly, the zeros of $H^{+}_{k}(-s)$ lie on the line $\re(s)=1/2$ and have imaginary parts $t$ which may be found by solving for $h_k(t)$ to lie in the set $\{\pi/2,3\pi/2,\ldots,(k-5/2)\pi \}$.
Moreover, as $k\rightarrow\infty$, the highest pair of complex conjugate roots of $H^{-}_k(s)$ have imaginary part equal in absolute value to
\[
\frac{(k-3)(k-1)}{2\pi}+O(1)
,
\]
and the height of the highest roots of $H^+_k(s)$ is 
\[
\frac{(k-3)(k-1)}{\pi}+O(1)
.
\]
\item Let $f\in S_4(\Gamma_0(N))$ be a newform. If $\epsilon(f)=-1$, then the only root of $Z_f(s)$ is at $s=1/2$. If $\epsilon(f)=1$, then there are two roots of $Z_f(s)$, and as $N\rightarrow\infty$, their roots converge on the sixth order roots of unity $\operatorname{exp}(\pm\pi i /3)$. 
\item For fixed $k\geq6$, as $N\rightarrow +\infty$, the zeros of $Z_f(s)$ for newforms $f\in S_k(\Gamma_0(N))$ with $\epsilon(f)=\pm 1$ converge to the zeros of $H^{\pm}_{k}(-s)$. 
Moreover, for all $k,N$, if $\epsilon(f)=1$ {\text {\rm (resp. $\epsilon(f)=-1$)}}, then the imaginary part of the largest root is strictly bounded by 
$(k-3)\left(k-\frac 72\right)$  {\text {\rm (resp. 
$(k-4)\left(k-\frac 92\right)$)}}.
\end{enumerate}
\end{theorem}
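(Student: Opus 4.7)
The three parts split naturally: Part 1 is a concrete zero-location argument for the combinatorial polynomials $H^\pm_k(-s)$, while Parts 2 and 3 both follow from an asymptotic reduction of $Z_f(s)$ to $H^{\epsilon(f)}_k(-s)$ as $N \to \infty$. For Part 1, I would first apply the hockey-stick identity to telescope the defining sum for $H^-_k$, obtaining $H^-_k(-s) = \binom{k-2-s}{k-2} - \binom{-s}{k-2}$, parallel to $H^+_k(-s) = \binom{k-2-s}{k-2} + \binom{-s}{k-2}$. Expanding as falling factorials and using that $k$ is even, $(k-2)!\, H^\pm_k(-s) = \prod_{j=1}^{k-2}(s-j) \pm \prod_{j=0}^{k-3}(s+j)$. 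On the line $s = \tfrac12 + it$, the identity $s-j = -\overline{s+(j-1)}$ shows these two products are complex conjugates; setting $A(t) := \prod_{j=0}^{k-3}(\tfrac12+j+it)$ then gives $(k-2)!\, H^+_k(-s) = 2\operatorname{Re} A(t)$ and $(k-2)!\, H^-_k(-s) = -2i\operatorname{Im} A(t)$. Computing $\arg A(t) = \sum_j \arctan(\tfrac{2t}{2j+1}) = \tfrac{(k-2)\pi}{2} - h_k(t)$, the critical-line zeros correspond to $h_k(t) \equiv 0 \pmod \pi$ for $H^-_k$ and $h_k(t) \equiv \tfrac{\pi}{2} \pmod \pi$ for $H^+_k$. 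Since $h_k$ is continuous, strictly decreasing, and surjects onto $(0,(k-2)\pi)$, it hits each admissible value exactly once, giving the stated zero counts (the degree of $H^-_k(-s)$ is only $k-3$ because the $s^{k-2}$ terms cancel, matching $k-3$ zeros). For the heights, expand $\cot^{-1}(x) = 1/x + O(x^{-3})$ and use $\sum_{j=0}^{k-3}(2j+1) = (k-2)^2$ to obtain $h_k(t) = \tfrac{(k-2)^2}{2t} + O(t^{-3})$ for large $t$; inverting at $h_k = \pi$ or $\pi/2$ and noting $(k-2)^2 = (k-3)(k-1)+1$ yields the asymptotic heights.

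For Parts 2 and 3, the key is the asymptotic $Z_f(s) = \Lambda(f,k-1)H^{\epsilon(f)}_k(-s) + o(\Lambda(f,k-1))$ as $N \to \infty$. Since $\Lambda(f,j+1) = (\tfrac{\sqrt N}{2\pi})^{j+1}\Gamma(j+1)L(f,j+1)$ grows like $N^{(j+1)/2}$ (with the $L$-values controlled polynomially by convexity), the dominant contributions to $M_f(m) = \tfrac{1}{(k-2)!}\sum_{j=0}^{k-2}\binom{k-2}{j}\Lambda(f,j+1)j^m$ come from $j = k-2$ for every $m$, and additionally from $j = 0$ when $m = 0$: thus $M_f(m) = \tfrac{1}{(k-2)!}[\delta_{m,0}\Lambda(f,1) + \Lambda(f,k-1)(k-2)^m] + O(N^{(k-2)/2})$. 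Substituting into \eqref{Zdef} and reindexing $n = m+h$, the inner sum for the $\Lambda(f,k-1)$ part collapses via the Stirling generating function $\sum_n s(k-2,n)(k-2-s)^n = (k-2-s)(k-3-s)\cdots(1-s) = (k-2)!\binom{k-2-s}{k-2}$, while the $\delta_{m,0}\Lambda(f,1)$ part gives $\sum_h s(k-2,h)(-s)^h = (k-2)!\binom{-s}{k-2}$. Using $\Lambda(f,1) = \epsilon(f)\Lambda(f,k-1)$ the asymptotic follows, and Hurwitz's theorem transfers this polynomial convergence to convergence of the zero sets. Part 2 (the $k=4$ case) reduces to direct computation: if $\epsilon(f) = -1$, the functional equation forces the central value $\Lambda(f,2) = 0$, so $M_f(0) = 0$ and $Z_f(s) = \Lambda(f,3)(1-2s)$ has its sole root at $s = \tfrac12$; if $\epsilon(f) = +1$, $\Lambda(f,1) = \Lambda(f,3)$ simplifies $Z_f$ to $\Lambda(f,1) + M_f(0)(s^2-s)$ with $M_f(0) = \Lambda(f,1) + \Lambda(f,2)$, and $\Lambda(f,2)/\Lambda(f,1) = O(N^{-1/2})$ drives its roots to $\tfrac12 \pm i\tfrac{\sqrt 3}{2} = \exp(\pm i\pi/3)$.

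The main obstacle is the uniform (non-asymptotic) bound on the largest imaginary part of zeros of $Z_f(s)$ in Part 3, which must hold for every $N$ and not just in the limit. The reduction above combined with Hurwitz gives convergence of zero sets but does not directly produce the explicit constants $(k-3)(k-\tfrac72)$ and $(k-4)(k-\tfrac92)$. To obtain these strict bounds I would need to control the error $Z_f(s) - \Lambda(f,k-1)H^{\epsilon(f)}_k(-s)$ effectively, using $N$-uniform convexity (or Rankin--Selberg) bounds for $L(f,j+1)$, and then apply a Rouch\'e-type argument on a carefully chosen large half-disc enclosing the highest zero of $H^{\pm}_k(-s)$ whose location was computed in Part 1. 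This effective comparison, together with the explicit location of that zero from Part 1, should yield the desired strict bounds.
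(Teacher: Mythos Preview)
Your Part~1 argument is correct and is essentially a self-contained version of what the paper obtains by citing \cite{BHW}: you reprove (the relevant case of) Theorem~1.7 of \cite{BHW} directly via the conjugate-pair structure of the two falling factorials on the line $\re(s)=\tfrac12$, and the degree count then forces all zeros onto the line. The paper instead observes that $H^\pm_k$ are the Rodriguez--Villegas transforms of $x^{k-2}+1$ and $\sum_{j=0}^{k-3}x^j$ and invokes \cite{BHW}; your route is more elementary and buys a self-contained proof, while the paper's route makes the Ehrhart-polynomial structure explicit (which matters below). For the convergence in Parts~2 and~3, your moment computation showing $Z_f(s)=\Lambda(f,k-1)\,H^{\epsilon(f)}_k(-s)+o(\Lambda(f,k-1))$ and then Hurwitz is a valid alternative to the paper's route, which instead passes through the equidistribution of zeros of $R_f$ (Theorem~1.2 of \cite{RHPP}) to see that the normalized $R_f$ tends to $x^{k-2}\pm1$ and then applies the continuity of the Rodriguez--Villegas transform. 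Both give the same limiting statement.

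The genuine gap is the \emph{uniform} strict bound on the imaginary parts in Part~3. Your proposed Rouch\'e argument with effective convexity bounds for $L(f,j+1)$ would at best give a bound that holds for $N$ sufficiently large (depending on $k$), not for \emph{all} $k,N$, and it would not produce the specific constants $(k-3)(k-\tfrac72)$ and $(k-4)(k-\tfrac92)$. The paper avoids this entirely by appealing to Theorem~1 of Braun \cite{B}, which gives an \emph{a priori} norm bound on the roots of any Rodriguez--Villegas transform of a polynomial with nonnegative coefficients, depending only on the degree. The point is that $R_f(z)$ (when $\epsilon(f)=1$) and $R_f(z)/(1-z)$ (when $\epsilon(f)=-1$) have coefficients of one sign by the chain of inequalities $0\le\Lambda(f,\tfrac k2)\le\cdots\le\Lambda(f,k-1)$ from Lemma~\ref{ValueOneNotZeroPP}; this positivity is exactly the hypothesis Braun needs, and his bound then applies to $Z_f(s)$ uniformly in $N$ and yields the stated explicit constants. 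So the missing ingredient in your approach is not an effective $L$-value estimate but rather the observation that $Z_f$ is the Rodriguez--Villegas transform of a nonnegative-coefficient polynomial, which unlocks a degree-only bound.
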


\begin{remark}
Theorem~\ref{ZeroDistribution} (3) is somewhat analogous to (\ref{NT}). Since the zeros of $Z_f(s)$ are approximated by those of  $H^{\pm}_{k}(-s)$, the analog of $N(T)$ is dictated by Theorem~\ref{ZeroDistribution} (1), where the largest zero
has imaginary part $\sim \frac{k^2}{2\pi}$ or $\sim  \frac{k^2}{\pi}$ depending on the sign of the functional equation. Numerical investigations indicate that the locations of the ``high'' zeros are however somewhat differently arranged than those of the Riemann zeta function, the $n$th highest pair of zeros being at approximately height $N_0/n$, where $N_0$ is the highest zero.
\end{remark}

As we shall describe shortly, the zeta polynomials $Z_f(s)$ may be thought of,  via Theorem \ref{ZeroDistribution}, as arithmetic-geometric Ehrhart polynomials. In this comparison, the combinatorial structure in Theorem~\ref{Thm3}, which we call
the ``Bloch-Kato complex'', serves as an analogue of a polytope. 
Assuming the Bloch-Kato Conjecture, Theorem~\ref{Thm3} describes the values $Z_f(-n)$ as combinatorial sums of $m$-weighted moments of the $j$th Bloch-Kato components. 
To describe this combinatorial structure, we made use of  the Stirling numbers $s(n,k)$ which can be arranged in a ``Pascal-type'' triangle 
$$
\begin{array}{ccccccccccccc}
 &  &   &        &      &       & 1 &  &   &  &  &  \\
& &   &       &      &  0     &  & 1 &   &  &  &   &  \\
&  &  &        & 0    &       & -1 & & 1 &  & &  & \\
& &  & 0      &     & 2       &    &  -3    &  & 1 &  &  &   \\
&  & 0 &       & -6  &      & 11  &         & -6 &  & 1 &  &  \\
  & 0 &   & 24    &    &-50     &    & 35      &   & -10 &   & 1 &  \\
0 \ \ &    & -120 &   & 274    &    & -225   &  & 85 &  & -15 &  & \ \ 1\\
\end{array}
$$
thanks to the recurrence relation
$$
s(n,k)=s(n-1,k-1)-(n-1)\cdot s(n-1,k).
$$
This follows from the obvious relation
$$
(x)_n=(x)_{n-1} (x-n+1)=x(x)_{n-1}-(n-1)(x)_{n-1}.
$$
The Bloch-Kato complex is then obtained by cobbling together weighted layers of these Pascal-type triangles using the binomial coefficients appearing in (\ref{Zdef}).

The connection to Ehrhart polynomials arises from the central role played by the $H_k^{\pm}(-s)$ in our study of the $Z_f(s)$.
In \cite{RV}, Rodriguez-Villegas proved that certain  Hilbert polynomials, such as the  $H^{\pm}_{k}(-s)$, which are Rodriguez-Villegas transforms of $x^{k-2}\pm1$, are examples of zeta-polynomials. 
These well-studied combinatorial polynomials  encode important geometric structure such as the distribution of integral points in polytopes.

Given a $d$-dimensional integral lattice polytope in $\mathbb R^n$, we recall that the Ehrhart polynomial $\mathcal L_p(x)$ is determined by 
\[
\mathcal L_p(m)=\#\left\{p\in\Z^n : p\in m\mathcal P\right\}
.
\]
The polynomials $H_{k}^-(s)$ whose behavior determines an estimate for those of $Z_f(s)$ (when $\epsilon(f)=-1)$  as per Theorem \ref{ZeroDistribution} are the Ehrhart polynomials of the simplex (cf. \cite{BHW})
 \[\operatorname{conv}\left\{e_1,e_2,\ldots,e_{k-3},-\sum_{j=1}^{k-3}e_j\right\}.
\]
We note that in Section 1.10 of \cite{GRV}, Gunnells and Rodriguez-Villegas also gave an enticing interpretation of the modular-type behavior of Ehrhart polynomials. Namely, they noted that the polytopes $P$ with vertices in a lattice $L$, when acted upon by $\operatorname{GL}(L)$ in the usual way, have a fixed Ehrhart polynomial for each equivalence class of polytopes. Hence, these classes may be thought of as points on a ``modular curve'', and the operation of taking the $\ell$th coefficient of the Ehrhart polynomial is analogous to a modular form. This analogy is strengthened as they define a natural Hecke operator on the set of Ehrhart polynomials, such that the $\ell$th coefficients of them are eigenfunctions. Moreover, they show that these eigenclasses are all related to explicit, simple Galois representations. Thus it is natural---as well as intriguing---to speculate about the relationship between these observations and Theorem \ref{ZeroDistribution}. In particular, we have shown that as the level $N$ of cusp forms of a fixed weight $k$ tends to infinity, the coefficients of the zeta-polynomial $Z_f(s)$ tend to (a multiple of) these coefficients of Ehrhart polynomials considered in \cite{GRV}. It is also interesting to note that Zagier defined (see \cite{ZagHecke}) a natural Hecke operator on the period polynomials of cusp forms, which commutes with the usual Hecke operators acting on cusp forms. Thus one may ask if there is a reasonable interpretation of Hecke operators on the zeta functions $Z_f(s)$, which ties together this circle of ideas.

In the following sections, we will give a brief outline of the main underlying ideas of the proofs of these results. In particular, we devote the next subsection to some tools needed for the proofs, follow up with the proof sketches, and finish with several examples.

\subsection{Tools needed for the proof of Manin's conjecture}\label{RHPPTools}
\subsubsection{Work of Rodriguez-Villegas}

We begin our summary of key tools for the proofs of the preceding section by recalling the clever and useful observations of Rodriguez-Villegas in \cite{RV}. We will only be concerned with a specialized result; although the reader is referred to the original paper for a more general version and interesting mathematical context of his work. We first take a polynomial $U(z)$ of degree $e$ with $U(1)\neq0$. The non-vanishing condition is non-essential, as otherwise one simply factors out all the powers of $1-z$ from the polynomial and then applies the Rodriguez-Villegas transform; however, it is convenient for the simplest description to have the exact degree of the polynomial with these factors pinned down. We then define the rational function
\[
P(z):=\frac{U(z)}{(1-z)^{e+1}}
.
\]
Expanding in $z$, we have
\[
P(z)=\sum_{n=0}^{\infty}h_nz^n
,
\]
and it is easy to see that that there is a polynomial $H(z)$ of degree $d-1$ such that $H(n)=h_n$ for all $n$. The key observation of \cite{RV} is then as follows.
\begin{theorem}[Rodriguez-Villegas]\label{RVThm}
If all roots of $U$ lie on the unit circle, then all roots of $Z(s)$ lie on the vertical line $\operatorname{Re}(z)=1/2$. Moreover, if $U$ has real coefficients and $U(1)\neq0$, then $Z(s)$ satisfies the functional equation
\[
Z(1-s)=(-1)^{d-1}Z(s)
.
\]
\end{theorem}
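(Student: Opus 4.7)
The plan is to address the two assertions separately, since they have rather different characters. Throughout I will take $Z(s)=H(-s)$ (consistent with the zeta polynomial conventions in Theorem~\ref{Thm2}), so that zeros of $Z(s)$ on $\operatorname{Re}(s)=1/2$ correspond to zeros of $H(s)$ on $\operatorname{Re}(s)=-1/2$. The essential technical input common to both parts is the classical observation that, because $P(z)=U(z)/(1-z)^{e+1}$ is rational with $h_n=H(n)$ a polynomial of degree $d-1=e$ in $n$, the same rational function $P$ encodes all negative-argument values of $H$ via its Laurent expansion around $z=\infty$: namely $P(z)=-\sum_{n\ge 1}H(-n)z^{-n}$ there, paralleling $P(z)=\sum_{n\ge 0}H(n)z^n$ near $z=0$.

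For the functional equation, the hypotheses of real coefficients, $U(1)\ne 0$, and roots on the unit circle force the root multiset of $U$ to be closed under $\alpha\mapsto 1/\alpha=\bar\alpha$, and $U$ to be palindromic in the clean sense $z^eU(1/z)=U(z)$. (The sign here really is $+1$: the only non-paired root allowed by $U(1)\ne 0$ is $-1$, and after pairing off the complex-conjugate pairs, the remaining sign $(-1)^e\prod\alpha_j$ comes out to $+1$.) A short computation using this reciprocity then yields $P(1/z)=(-1)^{e+1}\,z\,P(z)$. Expanding both sides as Laurent series near $z=0$ and comparing the coefficient of $z^n$ via the observation above translates this into the polynomial identity $H(-s)=(-1)^e H(s-1)$; substituting $Z(s)=H(-s)$ yields the stated $Z(1-s)=(-1)^{d-1}Z(s)$.

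For the Riemann Hypothesis, the functional equation already ensures that $J(t):=H(-\tfrac12+it)$ is either purely even or purely odd in $t$, so the task reduces to showing that every zero of $J$ is real. My plan is to use the expansion of $U$ at $z=1$ to write
\[
H(s)=\sum_{j=0}^{e}\frac{(-1)^jU^{(j)}(1)}{j!}\binom{s+e-j}{e-j},
\]
and then to induct on $e$ by peeling off reciprocal quadratic factors $(z-\alpha)(z-\bar\alpha)$ (or linear factors $(z+1)$) from $U$ and analyzing the resulting change in $J(t)$. The base case is explicit: for $U(z)=z^2-2\cos\theta\,z+1$ one finds $H(s)=(1-\cos\theta)s(s+1)+1$, whose discriminant is negative, placing its zeros at $-\tfrac12\pm i\sqrt{4c-1}/2$ with $c=1/(1-\cos\theta)$. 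The inductive step amounts to verifying an interlacing of the even and odd parts of $J(t)$ in the spirit of the Hermite--Biehler theorem.

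The main obstacle is this inductive step, where the condition $|\alpha_j|=1$ must enter essentially rather than merely through palindromy: one has to show that attaching a new unit-modulus conjugate pair to $U$ preserves the interlacing pattern, and this is a genuine analytic statement rather than a formal symmetry. An appealing alternative—likely closer to Rodriguez-Villegas's original route—is to introduce a Möbius change of variable carrying the unit circle to the imaginary axis and to track how the resulting auxiliary polynomial relates to $H$ under this transform, thereby encoding the unit-circle constraint directly as a real-rootedness condition and bypassing the explicit inductive book-keeping.
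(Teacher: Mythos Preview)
Your treatment of the functional equation is essentially the paper's: both arguments derive the self-reciprocity $z^{e}U(1/z)=U(z)$ from the hypotheses (roots on the unit circle, real coefficients, $U(1)\ne 0$) via the observation $(-1)^{e}\prod_j\rho_j=1$, pass to the symmetry $P(1/z)=(-1)^{e+1}zP(z)$, and then read off the functional equation. Where the paper simply invokes ``the single proposition of \cite{RV}'' for that last step, you supply it directly by comparing the two Laurent expansions of $P$ at $0$ and at $\infty$; this is a perfectly good and arguably more self-contained way to close the argument.

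For the first assertion---the Riemann Hypothesis---the paper gives no proof at all; it just records that this is the $d=e+1$ case of the main theorem in \cite{RV}. So there is nothing in the paper to compare your argument against, and any complete proof you produce already goes beyond what is offered here. That said, your proposed induction has exactly the gap you flag: the step where one adjoins a unit-modulus conjugate pair to $U$ and must preserve an interlacing pattern is the whole content of the theorem, not a routine check. One further point worth noting: your reduction to real-rootedness of $J(t)=H(-\tfrac12+it)$ leans on the functional equation and hence on the additional hypothesis that $U$ has real coefficients; the first clause of the theorem does not assume this, so in full generality you cannot appeal to the even/odd parity of $J$. Rodriguez-Villegas's original argument avoids this by precisely the conformal change of variable you mention as the ``appealing alternative,'' and that is indeed the route that works cleanly without the extra hypothesis.
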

\begin{proof}
The first claim is simply the special case of the Theorem of \cite{RV} when $d=e+1$. The second claim was described in Section 4 of \cite{RV}, but for the reader's convenience we sketch the proof. By the single proposition of \cite{RV}, it suffices to show that $P(1/z)=(-1)^dx^{d-e}P(z)$. Now suppose that $U$ factors as
\[
U(z)=(z-\rho_1)\ldots(z-\rho_e)
,
\]
where each $\rho_j$ is on the unit circle but not equal to $1$. Then 
\[
z^eU\left(\frac 1z\right)=(1-z\rho_1)\ldots(1-z\rho_e).
\]
Since the coefficients of $U$ are real, we have $(-1)^e\rho_1\rho_2\ldots\rho_e=1$, and dividing by this quantity yields 
\[
U\left(\frac 1z\right)=z^{-e}U(z)
.
\]
The claimed transformation for $P$ then follows directly from the definition.

\end{proof}

\subsubsection{Period polynomials at $1$}
Here, we recall a simple result that we shall need, related to the order of $R_f(s)$ at $s=1$. 
 \begin{lemma}\label{ValueOneNotZeroPP}
Assuming the notation above, $R_f(1)\neq0$ if $\epsilon(f)=1$, and $R_f(s)$ has a simple zero at $s=1$ if $\epsilon(f)=-1$.
\end{lemma}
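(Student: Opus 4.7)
Both assertions flow from the functional equation satisfied by the period polynomial $R_f(z)$ together with the Riemann Hypothesis for period polynomials (Theorem~\ref{RHPP}, refined as in Remark~\ref{PreciseRmk}).

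The first step is to derive a functional equation for $R_f$. Substituting $\Lambda(f,k-1-j)=\epsilon(f)\Lambda(f,j+1)$ into the definition of $R_f$ and reindexing $j\mapsto k-2-j$ yields
\[
R_f(z)=\epsilon(f)\,z^{k-2}\,R_f(1/z).
\]
In particular, setting $z=1$ gives $R_f(1)=\epsilon(f)\,R_f(1)$, which already implies $R_f(1)=0$ in the case $\epsilon(f)=-1$. In this case one may write $R_f(z)=(1-z)\,Q(z)$ for a polynomial $Q$ of degree $k-3$; substituting back into the functional equation then shows that $Q$ inherits the self-reciprocal property $Q(z)=z^{k-3}Q(1/z)$.

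The remaining content of the lemma is to show that $Q(1)\neq 0$ when $\epsilon(f)=-1$ and that $R_f(1)\neq 0$ when $\epsilon(f)=+1$. The functional equation by itself does not suffice here, because the point $z=1$ lies on the critical circle $|z|=1$ and so cannot be excluded as a zero by the symmetry alone. To close this gap I would invoke the strengthened Riemann Hypothesis for period polynomials referred to in Remark~\ref{PreciseRmk}: the proof in \cite{RHPP} places each root $e^{i\theta}$ of $R_f(z)$ in an explicit angular window which is bounded away from $\theta=0$. In the $\epsilon(f)=+1$ case this immediately gives $R_f(1)\neq 0$, and in the $\epsilon(f)=-1$ case it rules out a second root of $R_f$ at $z=1$, forcing $Q(1)\neq 0$ and hence the simplicity of the zero.

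The main obstacle is therefore not algebraic manipulation, which is immediate, but rather the reliance on the precise angular information for the zeros of $R_f$. The bare statement ``all zeros lie on $|z|=1$'' does not by itself separate a potential zero at $z=1$ from the rest, and so the argument genuinely depends on the sharper version of Theorem~\ref{RHPP} proved in \cite{RHPP}.
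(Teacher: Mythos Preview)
Your derivation of the functional equation $R_f(z)=\epsilon(f)\,z^{k-2}R_f(1/z)$ is correct, and setting $z=1$ is indeed the cleanest way to see that $R_f(1)=0$ when $\epsilon(f)=-1$. The factorization $R_f(z)=(1-z)Q(z)$ with $Q(z)=z^{k-3}Q(1/z)$ is also fine.

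The gap is in the remaining step. You assert that the refined result in \cite{RHPP} ``places each root $e^{i\theta}$ of $R_f(z)$ in an explicit angular window which is bounded away from $\theta=0$.'' As stated this is false when $\epsilon(f)=-1$, since $z=1$ \emph{is} a root; what you would actually need is that \cite{RHPP} pins down $k-2$ \emph{distinct simple} zeros in disjoint arcs, with exactly one of them at $z=1$ in the odd-sign case. But Remark~\ref{PreciseRmk} and the uses of Theorem~1.2 of \cite{RHPP} elsewhere in the paper refer to asymptotic equidistribution as $N\to\infty$, not to a uniform separation valid for every $k$ and $N$. You have not checked that the cited result delivers what you need, and in fact the sign-change/intermediate-value argument sketched for Theorem~\ref{RHPP} does not by itself control multiplicities at $z=1$; if anything, one expects the order of vanishing at $z=1$ to be input to, rather than output of, that argument.

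The paper avoids this entirely by a direct positivity argument. Using the functional equation for $\Lambda(f,s)$ one writes $R_f(1)$ (and, when $\epsilon(f)=-1$, $R_f'(1)$) as a sum of nonnegative terms via Waldspurger's nonnegativity of $\Lambda(f,k/2)$ together with the monotonicity chain $0\le\Lambda(f,k/2)\le\Lambda(f,k/2+1)\le\cdots\le\Lambda(f,k-1)$ (Lemma~2.1 of \cite{RHPP}). The sum cannot vanish because not all periods of $f$ are zero; in the $\epsilon(f)=-1$ case the top term $(k-2)\Lambda(f,k-1)$ of $R_f'(1)$ is strictly positive. This is more elementary than invoking the full refined RHPP and has no circularity or asymptotic caveat. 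Your functional-equation observation could replace the paper's computation showing $R_f(1)=0$ when $\epsilon(f)=-1$, but for the nonvanishing and simplicity you should use the positivity route rather than the angular bounds.
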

\begin{proof}
The functional equation for $\Lambda(f,s)$ shows that
\begin{align}\label{RF1Eqn}
R_f(1)
&=
\sum_{j=0}^{k-2} \binom{k-2}{j}\Lambda(f,j+1)\  
\nonumber \\
&=\begin{cases}
 \Lambda\left(f,\frac k2\right)+2\sum_{j=\frac{k+2}2}^{k-2}\binom{k-2}{j}\Lambda(f,j+1)\  &\text{ if }\epsilon(f)=1,
 \\
\Lambda\left(f,\frac k2\right)\  &\text{ if } \epsilon(f)=-1.
\end{cases}
\end{align}
Now $\Lambda(f,s)$ is real-valued on the real line, and
well-known work of Waldspurger \cite{W} implies that $\Lambda\left(f,\frac k2\right)\geq0$. Furthermore, Lemma 2.1 of \cite{RHPP} states that
\begin{equation}\label{ChainIneqCritVal}
0\leq\Lambda\left(f,\frac k2\right)\leq \Lambda\left(f,\frac k2+1\right)\leq\ldots\leq\Lambda(f,k-1),
\end{equation}
and that $\Lambda\left(f,\frac k2\right)=0$ if $\epsilon(f)=-1$. So, if $\epsilon(f)=1$, then the expression in the first case of \eqref{RF1Eqn} is composed of all non-negative terms, which cannot all vanish as it is impossible for all periods of $f$ to be zero. Hence, in this case, $R_f(1)\neq0$. If $\epsilon(f)=-1$, then as $\Lambda\left(f,\frac k2\right)=0$, we see that $R_f(1)=0$. To see that this zero is simple, note in a similar manner that all terms in $R_f'(1)$ are non-negative, with the last term being $(k-2)\Lambda(f,k-1)$. But this term cannot be zero, as the chain of inequalities in \eqref{ChainIneqCritVal} would then imply that all periods of $f$ are zero.
\end{proof}
\subsubsection{Ingredients for Theorem~\ref{Thm3}}

We first describe the local conditions $H^1_\mathbf{f}(\Q_p,V_\lambda(j))$ for a given prime $p$, following \cite[Section 3]{BK}. Recall that $\lambda$ was the prime above $l$ in Deligne's representation $V_\lambda$. 

The first case is when $p=l$. Here, we define $$H^1_\mathbf{f}(\Q_p,V_\lambda(j)):=\ker\left(H^1_\mathbf{f}(D_p,V_\lambda(j))\rightarrow H^1_\mathbf{f}(D_p,V_\lambda(j)\otimes\mathbb{B}_{\text{cris}})\right),$$
where $D_p$ denotes a decomposition group for a prime over $p$. For a definition of the $\Q_p$-algebra $\mathbb{B}_{\text{cris}}$, we refer to Berger's article \cite[II.3]{Berger}.

For the other cases (i.e. $p\neq l$), we let $$H^1_\mathbf{f}(\Q_p,V_\lambda(j)):=\ker\left(H^1_\mathbf{f}(D_p,V_\lambda(j))\rightarrow H^1_\mathbf{f}(I_p,V_\lambda(j))\right),$$
where $I_p$ is the inertia subgroup. We let $H^1_\mathbf{f}(\Q,V_\lambda(j))$ be the corresponding global object, i.e., the elements of $H^1(\Q,V_\lambda(j))$ whose restriction at $p$ lies in $H^1_\mathbf{f}(\Q_p,V_\lambda(j))$. 

We note that Bloch and Kato's Tamagawa number conjecture \ref{bkt} is independent of any choices, cf. \cite[Section 6]{D}, or for more detail cf. \cite[Proposition 5.14 (iii)]{BK} and \cite[page 376]{BK}, in which the independence of the choice in lattice in the Betti cohomology is discussed. 

Secondly, we describe the set of global points $H^0_\Q$, with the appropriate Tate twists:

$$ H^0_\Q(j):=\bigoplus_\lambda H^0(\Q,V_\lambda/T_\lambda(j))$$

\subsection{Sketch of proofs for theorems on zeta polynomials and the Riemann Hypothesis for period polynomials}\label{RHPPPfs}
Here we sketch the proofs of the theorems in Section \ref{ManinZetaPolySection}. We begin with the proof of the Riemann Hypothesis for period polynomials, which is an essential ingredient in the main proofs abou zeta polynomials.
\begin{proof}[Sketch of the proof of Theorem \ref{RHPP}]
The general strategy is to consider, for $m:=\frac{k-2}{2}$, the polynomial 
$$
P_f(X):=\frac{1}{2}\left(\begin{matrix} 2m\\ m \end{matrix}\right)\Lambda\left(f,\frac{k}{2}\right)+
\sum_{j=1}^m \left(\begin{matrix}2m\\ m+j \end{matrix}\right) \Lambda\left(f,\frac{k}{2}+j\right)X^j.
$$

The result then follows if the unit circle contains all of the zeros of
$$
T_f(X):=P_f(X)+\epsilon(f) P_f(1/X).
$$
Following a standard trick, we substitute $X \rightarrow z=e^{i\theta}$, note that $T_f(i\theta)$ takes on only real values,  and hope to find enough sign changes to locate all zeros using the Intermediate Value Theorem. In fact, it turns out that $T_f(z)$ is a trigonometric polynomial in $\sin$ or
$\cos$ depending on $\epsilon(f)$. For large values of $k$ or $N$, results of P{\' o}lya \cite{Polya} and Szeg{\" o} \cite{Szego} then give strong conditions on the locations of the zeros, given unimodal properties of the coefficients of these polynomials. Overall, this boils the argument down (in the limit) to proving several handy inequalities for the competed critical $L$ values of $f$. These are shown to follow from several arguments making keen use of the famous Hadamard factorization for $\Lambda(f,s)$ in terms of the roots of $\Lambda$.

This proof is only sufficient for modular forms of large weight or level. For the remaining cases, separate arguments can be directly constructed (for example, as we will look at in Section \ref{RHPPExamples}, if the weight is a small number like $4$, the polynomial is only of degree $2$ and can hence be studied somewhat directly). After this, finitely many cases remain, which may then be checked numerically, as the authors of \cite{RHPP} do using SAGE.
\end{proof}

We proceed to outline the proof of Theorem \ref{Thm1}, which gives the version of the Riemann Hypothesis inherited by $Z_f(s)$ from the Riemann Hypothesis for $P_f(s)$. 
Firstly, however, we need the result of Theorem \ref{Thm2}.
\begin{proof}[Proof of Theorem~\ref{Thm2}]
This proof is a combinatorial exercise; we refer the reader interested in the details to \cite{ORS}.
\end{proof}

\begin{proof}[Proof of Theorem \ref{Thm1}]
We first let
\[
\widehat R_f(z):=\frac{R_f(z)}{(1-z)^{\delta_{-1,\epsilon(f)}}}
,
\]
where $\delta_{i,j}$ is the Kronecker delta function. 
By Theorem \ref{RHPP} and Lemma \ref{ValueOneNotZeroPP}, we see that $\widehat R_f$ is a polynomial of degree $k-2-\delta_{-1,\epsilon(f)}$, all of whose roots lie on the unit circle, and such that $\widehat R_f(1)\neq0$.
Thus we have
\[
\frac{R_f(z)}{(1-z)^{k-1}}=\frac{\widehat R_f(z)}{(1-z)^{k-1-\delta_{-1,\epsilon(f)}}}
.
\]
Applying Theorems \ref{Thm2}, \ref{RHPP}, and \ref{RVThm} with $d=k-1-\delta_{-1,\epsilon(f)}$ yields the result, and in particular shows that the zeros of $Z_f(s)$ lie on the line $\re(s)=1/2$.

\end{proof}

\begin{proof}[Proof of Theorem~\ref{Thm3}]
The proof of Theorem~\ref{Thm3} follows immediately from replacing the terms involving $L(f,j+1)$ by $\widetilde{C(j+1)}$, and appropriate normalizations.
\end{proof}

\begin{proof}[Sketch of the proof of Theorem~\ref{ZeroDistribution}]

Part (1)  follows directly from the observation (which is easy to combinatorially verify) that the polynomials $H^-_{k}(x)$ are Rodriguez-Villegas transforms of $\sum_{j=0}^{k-3} x^j$ and that the $H^+_k(x)$ are the transforms of $x^{k-2}+1$. 

The fact that the zeros of $H^{-}_{k}(x)$ lie on the line $\re(x)=1/2$ is one of the main examples of Theorem \ref{RVThm} in \cite{BHW}. The precise location of its zeros in Theorem \ref{ZeroDistribution} is a direct restatement (and slight modification in the case of $\epsilon(f)=+1$) of Theorem 1.7 of \cite{BHW}. 
 
The proof of part (2) follows directly from the more precise estimates on the locations (and in particular the equidistribution) of the zeros in Theorem \ref{RHPP}, which is hinted at in Remark \ref{PreciseRmk} and provided in Theorem 1.2 in \cite{RHPP}. As extracting roots of a polynomial is continuous in the coefficients of the polynomial, we have the desired convergence of the roots of $Z_f(s)$ in the limit to those of the polynomials $H^{\pm}_k(x)$.

Part (3) also follows by utilizing the precise estimates of Theorem 1.2 of \cite{RHPP} to determine the zeros of $R_f(z)$ to high accuracy. 
The strict upper bound on the imaginary parts of roots follows from a careful consideration of  Theorem 1 of \cite{B}, using the positivity properties of critical completed $L$-values reviewed in Lemma \ref{ValueOneNotZeroPP}. When $\epsilon(f)=-1$, by Lemma \ref{ValueOneNotZeroPP} and the functional equation for $\Lambda(f,s)$, we see that $R_f(x)/(1-x)$ is a polynomial with all non-positive coefficients, so the Rodriguez-Villegas transform of $R_f$ is the same as the Rodriguez-Villegas transform of this polynomial with the degree lowered by $1$. This shows that we may again use Theorem 1 of \cite{B}, by applying it to the polynomial $R_f(s)/(x-1)$ with positive coefficients and whose transform has the same zeros as $Z_f(s)$.

\end{proof}

\subsection{Examples}\label{RHPPExamples}

\subsubsection{Period polynomials in weight 4}
To give a flavor of the types of analysis going into the proof of Theorem \ref{RHPP} for small weights and levels, here we consider the case of weight $4$ newforms. 
The period polynomial $R_f$ is then quadratic, and we are concerned with the zeros of
$$
-2L(f,1)\pi^2 X^2+2\pi i L(f,2)X+L(f,3)=0.
$$

Clearly, we may then apply the quadratic formula.
Now, the values $L(f,1)$ and $L(f,3)$ are also related via the functional equation for $L(f,s)$, and the 
conclusion is thus trivial if $L(f,2)=0$.

In the case when $L(f,2)\neq 0$, the theorem is equivalent to the estimate
\[\frac{N}{\pi^2} L(f,3)^2 \geq L(f,2)^2.\]
This can be shown using the Hadamard factorization of $\Lambda(f,s)$:
$$
\Lambda(f,s)=e^{A+Bs}\prod_{\rho}\left(1-\frac{s}{\rho}\right)\exp(s/\rho)
$$
Here the product is over all the zeros of $\Lambda(f,s)$ (that is, the non-trivial zeros of $L(f,s)$), and $A$ and $B$ are constants.  
Now we always have $3/2\leq \re(\rho)\leq 5/2$, which means that $\Lambda(f,3)\geq \Lambda(f,2)$, and is enough to imply the desired inequalities.

\subsubsection{Zeta function for the modular discriminant}

It is interesting to consider the first level 1 newform, namely, the normalized Hecke eigenform $f=\Delta\in S_{12}(\Gamma_0(1))$. In this case, $\epsilon(f)=1$ and we have
\begin{align*}
R_{\Delta}(z)
\approx\ 
&
0.114379\cdot\left(\frac{36}{691}z^{10}+z^8+3z^6+3z^4+z^2+\frac{36}{691}\right)
\\
&
\ \ \ \ \ \ +0.00926927\cdot(4z^9+25z^7+42z^5+25z^3+4z).
\end{align*}
The ten zeros of $R_{\Delta}$ lie on the unit circle, and are approximated by the set
$$
\left\{
\pm i,-0.465\pm 0.885i,\
 -0.744\pm 0.668i,\
-0.911 \pm0.411i,\
-0.990\pm0.140i
\right\}.
$$
These are illustrated in the following diagram.
\begin{figure}[H]
\centering
\includegraphics[scale=0.5,keepaspectratio=true]{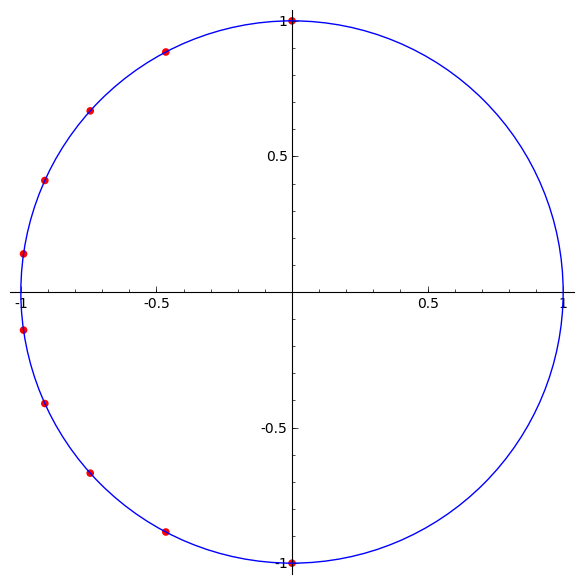}
\caption{The roots of $R_{\Delta}(z)$}
\end{figure}

It is quite tempting from this picture to conjecture that the zeros of $R_f$ always lie on one half of a unit circle. However, this small example is somewhat misleading: the zeros for large level or weight modular forms eventually have to equidistribute on the whole unit circle, as shown in \cite{RHPP}. Another manner in which this example is atypical is the following. The size of the coefficients of $R_f$ follows a unimodal pattern. The same equidistribution-of-zeros result shows that eventually the coefficients of $R_f$ must be unimodal, but in the opposite direction! In fact, if the direction of this unimodality of coefficients were reversed, the Riemann Hypothesis for $R_f$ would follow directly from its functional equation (showing it is a self-inversive polynomial) along with elementary results such as Theorem 1 of \cite{Chen}. It is this eventual ``correct'' unimodality that follows from the results of \cite{RHPP}, and this drastic switching of phenomena that make the proof of Theorem \ref{RHPP} split into several cases and make its statement surprising. In fact, this line of reasoning allows one to conclude that for large weights and levels, the location of the zeros of $R_f(s)$ on the unit circle is a relatively ``stable'' phenomenon, while a quick numerical experiment for a polynomial such as $R_{\Delta}$ shows that the location of its zeros on the circle is very unstable, and hence rather astounding.

We now consider the Rodriguez-Villegas transform; letting $s\mapsto-s$ we find that 
\begin{align*}
&
Z_{\Delta}(s)\approx
(5.11\times 10^{-7})s^{10}
-(2.554\times 10^{-6})s^9 +(6.01\times 10^{-5})s^8
-( 2.25\times 10^{-4})s^7 
\\
&
+0.00180s^6 
-0.00463s^5+0.0155s^4
-0.0235s^3 +0.0310s^2
-0.0199 s
+
0.00596
  .
\end{align*} 
Theorem~\ref{Thm1} establishes that the zeros $\rho$ of $Z_{\Delta}$ satisfy $\re(\rho)=1/2$; indeed, they are  approximately
\begin{align*}
\left\{
\frac12\pm 8.447i,
 \frac12\pm 5.002i,
\frac12 \pm 2.846i,
\frac12 \pm 1.352i,
\frac12 \pm 0.349i,
\right\}
,
\end{align*}
as illustrated in the next figure.
\begin{figure}[H]
\centering
\includegraphics[scale=0.5,keepaspectratio=true]{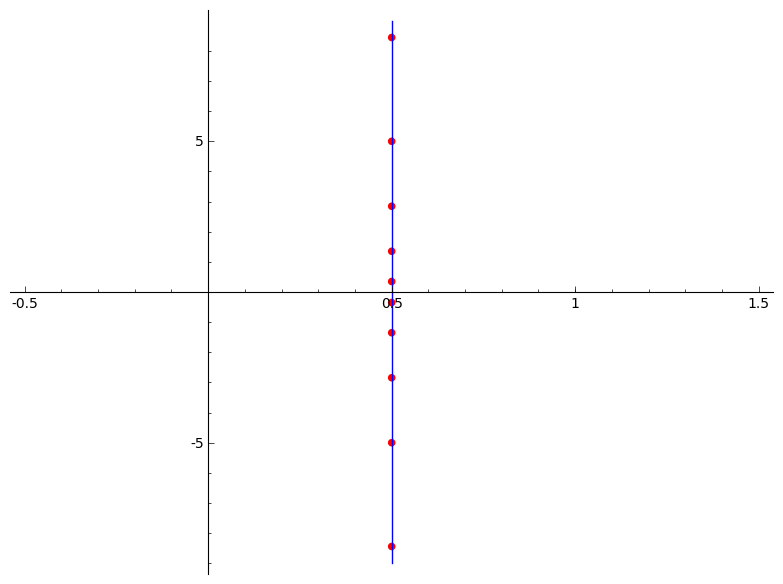}
\caption{The roots of $Z_{\Delta}(s)$}
\end{figure}

\subsubsection{Ehrhart polynomials and newforms of weight $6$}

Here we consider newforms $f\in S_6(\Gamma_0(N))$ with $\epsilon(f)=-1$.  
By Theorem~\ref{ZeroDistribution} (2), 
the roots of $Z_f(s)$ may be modeled (with improving accuracy as $N\rightarrow\infty$) on the roots of the Ehrhart polynomial of the convex hull 
\[
\operatorname{conv}\left\{e_1,e_2,e_3,-e_1-e_2-e_3\right\}
.
\]  
The following image image displays this tetrahedron. 
\begin{figure}[H]\label{FigureTetra}
\centering
\includegraphics[scale=0.5,keepaspectratio=true]{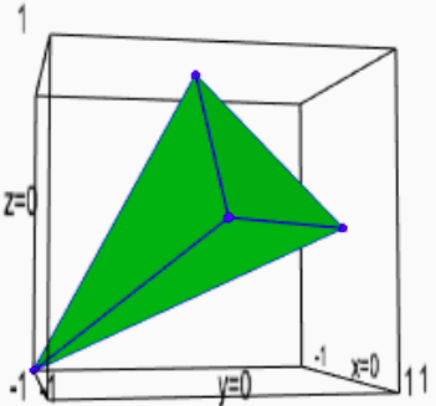}
\caption{The tetrahedron whose Ehrhart polynomial is $H_6^-(s)$.}
\end{figure}

The corresponding Ehrhart polynomial counts the number of integer points in dilations of Figure \ref{FigureTetra}, and is given by the Rodriguez-Villegas transform of 
$1+x+x^2+x^3$. Namely, we have
\[
H_6^-(s)=
\binom{s+3}{3}+\binom{s+2}{3}+\binom{s+1}{3}+\binom{s}{3}
=\frac23s^3 + s^2 + \frac73s + 1
.
\]
Therefore, we find that
$$
\lim_{N\rightarrow +\infty} \widetilde{Z}_f(s)=\widetilde{H}^{-}_6(-s)= \left(s-\frac{1}{2}\right)
\left(s-\frac{1}{2}+\frac{\sqrt{-11}}{2}\right) \left(s-\frac{1}{2}-\frac{\sqrt{-11}}{2}\right),
$$
where the limit is over newforms $f\in S_6(\Gamma_0(N))$ with $\epsilon(f)=-1$, and where the polynomials with tildes have been normalized to have leading coefficient 1.

\subsection{Concluding remark}

In both this and the previous section, we have surveyed a number of natural partition-theoretic forms emerging from the same stellar nursery as the Riemann zeta function. Indeed, it is evident that classical zeta functions represent a relatively small subclass in the vast universe of partition-theoretic objects.

\  
\  
\  
\  
\  
\  


\vspace{10mm}

\end{document}